\documentclass[12pt, twoside]{amsart}

\usepackage{amssymb,amsfonts,amstext,amsthm,bm,amsmath}
\usepackage{amscd,amstext,mathrsfs,euscript,a4wide}  
\usepackage{booktabs, mathdots, accents}
\usepackage[british]{babel}
\hyphenation{al-ge-bra Sa-sa-kian Dorf-mei-ster mea-ning na-tu-ral-ly cur-va-tu-re de-fi-ni-tion uni-mo-du-lar clas-si-fi-ca-tion ge-ne-ral}

\usepackage{graphicx}
\usepackage[hidelinks]{hyperref} 
\usepackage{url}
\usepackage[all,cmtip]{xy} 
\usepackage{longtable,multicol,multirow,array}
\usepackage{enumitem}

\normalsize
\voffset-20pt
\oddsidemargin0pt\evensidemargin10pt
\textwidth450pt \textheight620pt
\footskip30pt \headsep30pt
\headheight12pt
\setlength{\parindent}{0pt}
\setlength{\parskip}{0pt}
\setlength{\baselineskip}{0pt}

\def\D{\textup{d}} 
\def\z{\mathfrak{z}}
\def\d{\mathfrak{d}}

\def\k{\mathfrak{k}}

\def\g{\mathfrak{g}}
\def\h{\mathfrak{h}}
\def\b{\mathfrak{b}}
\def\n{\mathfrak{n}}

\def\a{\mathfrak{a}}
\def\q{\mathfrak{q}}
\def\s{\mathfrak s}

\def\r{\mathfrak{r}}

\def\R{\mathbb{R}}

\def\Z{\mathbb{Z}}
\def\N{\mathbb{N}}
\def\ad{\operatorname{ad}}
\DeclareMathOperator{\End}{\operatorname{End}\,}
\DeclareMathOperator{\Id}{\operatorname{Id}}
\renewcommand{\Im}{\operatorname{Im}}
\def\Ker{\operatorname{Ker}}
\renewcommand{\geq}{\geqslant}\renewcommand{\leq}{\leqslant}
\newcommand{\lie}[1]{\mathfrak{#1}}
\DeclareMathOperator{\SL}{\operatorname{SL}}
\DeclareMathOperator{\SU}{\operatorname{SU}}
\DeclareMathOperator{\U}{\operatorname{U}}
\DeclareMathOperator{\slie}{\lie{sl}}
\DeclareMathOperator{\so}{\lie{so}}
\DeclareMathOperator{\su}{\lie{su}}
\def\aff{\mathfrak{aff}}
\newcommand{\bigslant}[2]{{\raisebox{.2em}{$#1$}\left/\raisebox{-.2em}{$#2$}\right.}} 
\def\pint{\langle \cdotp,\cdotp \rangle }
\def\ric{\operatorname{Ric}}
\def\rad{\operatorname{rad}}
\def\tr{\operatorname{tr}}
\newcommand{\iso}{\cong}
\newcommand{\then}{\Longrightarrow}
\newcommand{\op}[1]{\operatorname{#1}}
\newcommand{\restr}{\raisebox{-.2em}{$\!\big\arrowvert$}}
\newenvironment{spmatrix}{\left(\smallmatrix}{\endsmallmatrix\right)}
\newtheorem{definition}{\bf Definition}[section]
\newtheorem{prop}[definition]{Proposition}
\newtheorem{coro}[definition]{Corollary}
\newtheorem{lemma}[definition]{Lemma}
\newtheorem{rem}[definition]{Remark}

\newtheorem{thm}[definition]{Theorem}

\newtheorem{example}[definition]{\bf Example}
\newtheorem{examples}[definition]{\bf Examples}
\theoremstyle{definition}
\newtheorem*{acknowledgements}{Acknowledgements}

\begin{document}

\title{$\boldsymbol \eta$-Einstein Sasakian Lie algebras}
\date{\today}

\subjclass[2000]{}
\keywords{Sasakian Lie algebra, $\eta$-Einstein metric, normal $j$-algebra, invariant structure, K{\"ahler}-exact Lie algebra}

\author[Andrada]{Adri{\'a}n  Andrada}
\address[\href{mailto:adrian.andrada@unc.edu.ar}{AA}]{FaMAF, Universidad Nacional de C\'ordoba and Ciem-Conicet, Av. Medina Allende s/n, Ciudad Universitaria, X5000HUA C\'ordoba (AR)}

\author[Chiossi]{Simon G. Chiossi}
\address[\href{mailto:simongc@id.uff.br}{SGC}]{
Departamento de Matem{\'a}tica Aplicada, Instituto de Matem{\'a}tica e 
Estat{\'i}stica, Universidade Federal Fluminense, Rua professor Marcos Waldemar de Freitas Reis s/n (bloco G), 24210-201 Niter{\'o}i/RJ (BR)}

\author[N\~unez]{Alberth J. N{\'u}\~nez Sullca}
\address[\href{mailto:alberthnunez@id.uff.br}{AN}]{
PGMat, Instituto de Matem{\'a}tica e Estat{\'i}stica, Universidade Federal Fluminense, Rua professor Marcos Waldemar de Freitas Reis s/n (bloco H), 24210-201 Niter{\'o}i/RJ (BR)}

\frenchspacing
\thispagestyle{empty}

\begin{abstract}
We study $\eta$-Einstein Sasakian structures on Lie algebras, that is, Sasakian structures whose associated Ricci tensor satisfies an Einstein-like condition. We  divide into the cases in which the Lie algebra's centre is non-trivial (and necessarily one-dimensional) from those where it is zero. In the former case we show that any Sasakian structure on a unimodular Lie algebra is $\eta$-Einstein. As for centreless Sasakian Lie algebras, we devise a complete characterisation under certain dimensional assumptions regarding the action of the Reeb vector. Using this result, together with the theory of normal $j$-algebras and modifications of Hermitian Lie algebras, we construct new examples of $\eta$-Einstein Sasakian Lie algebras and solvmanifolds, and provide effective  restrictions for their existence. 
\end{abstract}

\maketitle

\begin{center}{\it Dedicated to our dear Marisa Fern{\'a}ndez}\end{center}

\tableofcontents


\section{Introduction}

A Sasakian manifold is the odd-dimensional counterpart of a K{\"a}hler manifold, due to the fact that a Riemannian manifold $(M, g)$ of dimension $2n+1$ is Sasakian if and only if its Riemannian cone $M\times \R^{+}$ is 
K{\"a}hler. Sasakian structures on $(M, g)$ are triples $(\varphi, \eta ,\xi)$, consisting of an endomorphism $\varphi\in\so(TM)$, a vector field $\xi\in TM$ (the Reeb vector) and the metric-dual form $\eta\in TM^*$, satisfying certain compatibility conditions (\S \ref{sec: sasakian}). These manifolds have been  studied at length since Sasaki introduced them $60$-odd years ago. 
\smallbreak 
A distinguished family of Sasakian manifolds are those whose metric is Einstein, meaning that the associated Ricci tensor satisfies $\ric=\lambda g$ for some $\lambda \in \R$. They are naturally called Sasaki-Einstein manifolds, and enjoy remarkable properties. For instance, it turns out that $\lambda=2n$, so $g$ has positive Ricci curvature, whence $\pi_1(M)$ is finite in case $g$ is complete, by Myers' theorem. The prototypical examples are the spheres $S^{2n+1}$ with the standard K{\"a}hler metric. The  textbooks  \cite{Blair} and \cite{Boyer-Kris-BOOK} have become classical references on Sasakian geometry. 
\smallbreak 
 Okumura \cite{Oku} defined a generalisation of Sasaki-Einstein manifolds by imposing a condition that relates $\ric$ not only to $g$, but also to the Reeb $1$-form $\eta$. A Sasakian manifold is called $\eta$-Einstein if $\ric=\lambda g+\nu \,\eta \otimes \eta$ for some constants $\lambda, \nu \in \R$. The Sasakian-Einstein case is recovered for $\nu=0$. The constants $\lambda$ and $\nu$ are not arbitrary as they must obey $\lambda+\nu=2n$, and it can be shown that demanding $g$ be $\eta$-Einstein is equivalent to  saying the metric is Einstein on the distribution transversal to the Reeb vector. 
 From that, every Sasakian $\eta$-Einstein manifold has constant scalar curvature equal to $2n(\lambda+ 1)$. 
It was shown in \cite{BGM} that the value $\lambda=-2$ plays an important role in this context, for such manifolds can now be labelled positive (if $\lambda > -2$), null ($\lambda=-2$) and negative ($\lambda< -2$). In particular, therefore, Sasaki-Einstein manifolds are positive. \smallbreak 
According to \cite{Geiges}, a $3$-dimensional compact Sasakian manifold is diffeomorphic to the quotient of either $S^{3}\iso \SU(2)$, the Heisenberg group $H_3$ or the universal cover $\widetilde{\SL}(2,\R)$ by some lattice (a co-compact discrete subgroup). 
These three model geometries correspond precisely to positive, null or negative Sasakian $\eta$-Einstein structures. That  the universal cover of any of the above is a Lie group motivates the study of ($\eta$-Einstein) Sasakian structures on Lie groups. The investigation of Lie groups equipped with left-invariant Sasakian structures was initiated in \cite{AFV}. That article explains why the Lie algebras of Sasakian Lie groups can be divided into two classes: the ones with one-dimensional centre (which has to be spanned by the Reeb vector), and the ones with trivial centre. The first class is well understood, for it consists of central extensions of K{\"a}hler Lie algebras by the cocycle given by the K{\"a}hler form. Much less is known regarding Sasakian Lie algebras with trivial centre. The same paper provides us with a classification of $5$-dimensional Sasakian Lie algebras, and highlights the $\eta$-Einstein ones among them: among the $5$-dimensional Lie algebras admitting an $\eta$-Einstein Sasakian structure are the Heisenberg algebra $\h_5$, a very specific non-unimodular solvable Lie algebra 
and the product $\mathfrak{sl}(2,\R)\times \mathfrak{aff}(\R)$. 
As it turns out, these structures are all negative or null $\eta$-Einstein.\smallbreak
The present article's goal is to study higher-dimensional Lie groups equipped with a left-invariant $\eta$-Einstein Sasakian structure. To do so the first thing is to split the two cases: non-trivial centre and trivial centre. Over the former class the analysis is simpler, and we are able to show (Proposition \ref{prop: einstein con centro}) that a Sasakian Lie algebra with non-trivial centre is $\eta$-Einstein (with Einstein constant $\lambda$) precisely if it is the central extension of a K{\"a}hler-Einstein Lie algebra (with defining constant $\lambda+2$). Moreover, Corollary \ref{coro: eta-einstein flat} proves that if a Sasakian Lie algebra with non-trivial centre is unimodular then the structure is automatically $\eta$-Einstein, and actually null ($\lambda=-2$). In particular, Sasakian solvmanifolds whose Lie algebra has non-trivial centre must be null $\eta$-Einstein, and we provide in Example \ref{ex: null solvmanifold} original examples of solvmanifolds equipped with null $\eta$-Einstein Sasakian geometry. \par
\quad At the opposite end, if the centre is trivial, the Reeb vector $\xi$ no longer commutes with everything, but we can use it to split the Lie algebra $\g$ into an  orthogonal sum of the kernel of $\ad_\xi$ (a Sasakian subalgebra with non-trivial centre) and the image of $\ad_\xi$. Alas, the computations in this setting are considerably more involved and give little insight per se. We therefore restrict to the case where the image of $\ad_\xi$ is two-dimensional, which at least geometrically is reasonable. This extra assumption enables us to characterise all centreless Sasakian Lie algebras of this type (Proposition \ref{prop: compatibilidad} and Theorem \ref{carac sasaki}). Our characterisation involves a K{\"a}hler Lie algebra $\h_1$ (with exact K{\"a}hler form) and a distinguished vector $H_0$ in the commutator ideal of $\h_1$. As it turns out, the element $H_0$ plays a key  role in the study. For instance, if $H_0=0$ it follows that $\g$ is never solvable, whereas if $H_0\neq 0$ and $\h_1$ is solvable then $\g$ must be solvable as well. 
Regarding the $\eta$-Einstein condition, we show in Proposition \ref{H=0} that under the assumption $H_0=0$, the Sasakian Lie algebra $\g$ is $\eta$-Einstein if and only if $\h_1$ is K{\"a}hler-Einstein. The generic case ($H_0\neq 0$) is more difficult to address, and to tackle it we suppose $\h_1$ is solvable. 
Since semisimple Lie algebras cannot carry K{\"a}hler forms (nor any symplectic form, by the way), this further hypothesis is rather natural in the light of the Levi decomposition. The same assumption becomes even more significant due to the non-accidental appearance of so-called normal $j$-algebras (or $J$-algebras according to \cite{Dalek-Keizo-Kamishima}). These were introduced by Piatetskii-Shapiro and much studied by, first and foremost, Dorfmeister, D'Atri and Dotti. After a careful description, we use them in Theorem \ref{thm: examples with H0 no nulo} to construct new examples of $\eta$-Einstein Sasakian Lie algebras of negative type. 
\smallbreak 
Subsequently, we bring in the concept of modification of a metric Lie algebra with a two-fold purpose: first, we show in Theorem \ref{thm: lattices} that if $G$ is an arbitrary Lie group of dimension $\geq 5$ that admits a left-invariant $\eta$-Einstein Sasakian structure and contains a lattice, then $G$ is solvable and the $\eta$-Einstein structure is null. Secondly, we prove that any K{\"a}hler-exact Lie algebra is the modification of a normal $j$-algebra. We then employ this fact for showing that there are indeed only finitely many possibilities to pick $H_0$ 
(Proposition \ref{prop: finite}), based on which we also provide obstructions to the existence of $\eta$-Einstein structures (Proposition \ref{prop: restriction}).  
\smallbreak 
 Finally, in Proposition \ref{prop: final} we consider the extreme case when the image of $\ad_{\xi}$ is largest possible. We prove that this can only happen in dimension $3$, so the algebras of the case are $\mathfrak{su}(2)$ or $\mathfrak{sl}(2,\R)$.

\begin{acknowledgements}
This work is part of Nu\~nez' PhD thesis, and preliminary results were presented at the  2024 conference `Special holonomy and geometric structures on complex manifolds' held at {\sc impa}. He thanks the PhD programme at {\sc uff}, and {\sc capes}, for the financial support during his graduate studies and for the PDSE grant 6/2024. \par
\quad AN and SGC are grateful to {\sc famaf} (Facultad de Matem{\'a}tica, Astronom{\'i}a, F{\'i}sica y Computaci{\'o}n at Universidad Nacional de C{\'o}rdoba) and {\sc ciem} (Centro de Investigaci{\'o}n y Estudios de Ma\-te\-m{\'a}\-tica of {\sc conicet}) for the unwavering warm hospitality in C{\'o}rdoba. 
The three authors are indebted to Anna Fino, Gueo Grantcharov, Giulia Dileo, Ilka Agricola and Jorge Lauret for insightful conversations and for pointing out useful references. They thank the referee for reading the paper's first version carefully and making spot-on comments and improvements.\par
SGC is a member of {\sc in}d{\sc am} (Istituto Nazionale di Alta Matematica `Francesco Severi'). 
\end{acknowledgements}

\bigbreak

\section{Preliminaries}

\subsection{Sasakian manifolds}\label{sec: sasakian}

We are interested in Sasakian Lie algebras and their curvature features, with an eye to $\eta$-Einstein metrics. For completeness we shall begin by laying out the ambient framework, the standard notation and swiftly review a few known results.
\smallbreak

For starters, an \textit{almost contact structure} on an odd-dimensional smooth real manifold $M$ is a triple $(\varphi,\eta,\xi)$ where $\xi$ is a nowhere-vanishing vector field (called the \textit{Reeb vector field}), $\eta$ is a $1$-form satisfying $\eta(\xi)=1$ and $\varphi$ is an endomorphism of $TM$ such that 
$$ 
 \varphi^{2}=-\Id +\xi \otimes \eta.
$$
One refers to $(M,\varphi,\eta,\xi)$ as an \textit{almost contact manifold}. 
The above conditions 
force $\varphi(\xi)=0$ and $\eta \circ\varphi=0$, and as a consequence  the tangent bundle pointwise splits into a horizontal hyperplane plus a line
\[TM=\Ker \eta\oplus \R\xi,\]
and furthermore $\Ker \eta=\Im\varphi$. Associated with $\varphi$ is the  Nijenhuis $(2,1)$-tensor $N_{\varphi}$:
\begin{equation}\label{Nijenhuis definicion}
N_{\varphi}(X,Y):=\varphi^{2}[X,Y]+[\varphi X,\varphi Y]-\varphi[\varphi X,Y]-\varphi[X,\varphi Y],
\end{equation}
and the almost contact structure $(\varphi, \eta, \xi)$ is called $\textit{normal}$ if 
$$
    N_{\varphi}=-\D\eta \otimes \xi.
$$

The latter is equivalent \cite{Sasaki} to requiring that the almost complex structure
\begin{equation}\label{J en cono}
    J\left(X, f\tfrac{d}{d t}\right)= \left(\varphi X-f\xi,\eta(X)\tfrac{d}{d t}\right)
\end{equation}
 on $M\times \R$ be integrable, where $f\in\mathcal{C}^\infty(M \times \R)$ and $t$ is a coordinate on $\R$.
\medbreak

One says a  Riemannian metric $g$ on an almost contact manifold $(M,\varphi,\eta,\xi)$ is compatible with the almost contact structure if 
    \[ g(\varphi X,\varphi Y)=g(X,Y) -\eta(X)\eta(Y)\]
    for any $X,Y\in TM$. This defines a reduction of the manifold's structure group to $\U(n)$ and $(\varphi,\eta,\xi, g)$ becomes known as an \textit{almost contact metric structure}.
In such a quadruple the tensor $\varphi$ is skew-symmetric for $g$, and the Reeb field is the metric dual of $\eta$. Having a metric and $\varphi$ enables one to define the so-called fundamental $2$-form $\Phi\in \bigwedge^2T^*M$ by 
\[\Phi(X,Y):=g(X, \varphi Y),\]
in perfect analogy to the almost Hermitian setting. The classical Chinea--Gonz{\'a}lez classification of types of almost contact metric structure based on the intrinsic torsion has finally found a very geometric interpretation and reworking in \cite{Ilka}.
\medskip

As $M$ will typically be $(2n+1)$-dimensional, we will write $M^{2n+1}$ to remind ourselves.

\begin{definition}
    An almost contact metric structure $(\varphi,\eta,\xi, g)$ on a manifold $M^{2n+1}$ is called {\bf Sasakian} provided it is normal and
$\D \eta=2\Phi$ 
(making $\Phi$ exact). The data $(M,\varphi,\eta,\xi, g)$ is called a {\bf Sasakian manifold}.
\end{definition}
The relation $\D\eta =2\Phi$ implies $\eta$ is a contact form (ie, $\eta \wedge (d\eta)^n$ is a volume form).
\medbreak

There are a number of reasons for which Sasakian manifolds represent the most significant class of almost contact metric manifolds. First and foremost is their strong link to K{\"a}hler manifolds: the Riemannian cone of an almost contact metric manifold $M$, equipped with the structure induced by \eqref{J en cono}, is K{\"a}hler if and only if $M$ is Sasakian (see \cite{Boyer}, but also \cite{Grantcharov-Ornea}).
\smallbreak

As Sasakian manifolds are central to the present work, we shall 
recall a few essential properties that will be useful in upcoming sections. They are standard and proofs can be found in \cite{Blair,Boyer-Kris-BOOK}.
\medbreak

Henceforth $\nabla$ will indicate the Levi-Civita connection associated with $g$, and $R$ and $\ric$ will be the Riemannian and Ricci tensors. Our convention for the Riemann curvature is $R_{x,y}:=[\nabla_{x},\nabla_{y}] - \nabla_{[x,y]}$.

\begin{prop}\label{nabla xi}
For any vector fields $X,Y$ on a Sasakian manifold  $(M^{2n+1},\varphi,\eta,\xi, g)$ we have:
    \begin{enumerate}
        \item $\nabla_{X}\xi= - \varphi X$\; and\; $\nabla_{\xi}X=[\xi,X]-\varphi X$,\smallskip
        \item $(\nabla_{X}\varphi)Y=g(X,Y) \xi-\eta(Y)X$,\smallskip
        \item $R_{X,Y}\xi=\eta(Y)X-\eta(X)Y$, \smallskip
        \item $\ric(\xi,X)=2n\,\eta(X)$.
    \end{enumerate}
\end{prop}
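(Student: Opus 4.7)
The four statements are standard and I expect to prove them in the listed order, each feeding into the next.

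For statement (1), my plan is to exploit the K{\"a}hler cone $(M\times\R^{+},J,\tilde g)$ attached to the Sasakian structure via \eqref{J en cono}, whose existence is guaranteed by normality plus $\D\eta=2\Phi$. The K{\"a}hler condition $\tilde\nabla J=0$ translates, after identifying $M$ with $M\times\{1\}$ and decomposing $\tilde\nabla$ into its horizontal and radial parts via the warped-product formulas, into the two identities $\nabla_X\xi=-\varphi X$ and $(\nabla_X\varphi)Y=g(X,Y)\xi-\eta(Y)X$ simultaneously. This bundles the most delicate part of the proof — the fact that $\xi$ is a Killing field — into one conceptual step. As a sanity check I would also sketch the direct route: combine the universal identity $2g(\nabla_X\xi,Y)=\D\eta(X,Y)+(\mathcal L_\xi g)(X,Y)$ with $\D\eta=2\Phi$, after establishing $\mathcal L_\xi g=0$ from normality. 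The second half of (1), $\nabla_\xi X=[\xi,X]-\varphi X$, is then immediate from torsion-freeness: $\nabla_\xi X=\nabla_X\xi+[\xi,X]=-\varphi X+[\xi,X]$.

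For (3) I would substitute (1) directly into the curvature operator:
\[
R_{X,Y}\xi=\nabla_X\nabla_Y\xi-\nabla_Y\nabla_X\xi-\nabla_{[X,Y]}\xi
=-\nabla_X(\varphi Y)+\nabla_Y(\varphi X)+\varphi[X,Y].
\]
Expanding each $\nabla_\bullet(\varphi\,\cdot)$ via the Leibniz rule and using the torsion-free identity $\nabla_XY-\nabla_YX=[X,Y]$, the $\varphi\circ\nabla$ terms cancel and one is left with $(\nabla_Y\varphi)X-(\nabla_X\varphi)Y$. Plugging in (2) gives $\eta(Y)X-\eta(X)Y$, as desired.

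Statement (4) should follow by tracing (3). Taking a local orthonormal frame $\{e_i\}_{i=1}^{2n+1}$ and using the convention $\ric(X,Y)=\sum_i g(R_{e_i,X}Y,e_i)$,
\[
\ric(X,\xi)=\sum_{i=1}^{2n+1}g\bigl(R_{e_i,X}\xi,e_i\bigr)
=\sum_{i}g\bigl(\eta(X)e_i-\eta(e_i)X,\,e_i\bigr)
=(2n+1)\eta(X)-\eta(X)=2n\,\eta(X),
\]
with symmetry of $\ric$ handling $\ric(\xi,X)$.

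The single genuine obstacle is the Killing property of $\xi$, which underwrites (1); every subsequent assertion is algebraic manipulation of (1) and (2). The K{\"a}hler-cone route is the cleanest way to secure it without separately invoking additional machinery, and it has the added merit of giving (2) for free.
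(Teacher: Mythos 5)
Your proposal is correct. Note that the paper itself offers no proof of this proposition: it explicitly defers to the standard references \cite{Blair,Boyer-Kris-BOOK}, so there is no in-paper argument to compare against. Your two routes are precisely the two standard ones in those references --- the K\"ahler-cone argument (Boyer--Galicki) and the direct contact-metric computation via $2g(\nabla_X\xi,Y)=\D\eta(X,Y)+(\mathcal L_\xi g)(X,Y)$ (Blair) --- and your derivations of (3) from (1)--(2) and of (4) by tracing (3) are exactly right, including the count $(2n+1)\eta(X)-\eta(X)=2n\,\eta(X)$. Two small points of care: in the cone route the signs in $(\nabla_X\varphi)Y=g(X,Y)\xi-\eta(Y)X$ depend on the sign conventions chosen in \eqref{J en cono} (whether $J$ sends the radial field to $+\xi$ or $-\xi$), so these must be fixed consistently before reading off the tangential and radial components of $\tilde\nabla J=0$; and in the ``sanity check'' direct route, the assertion that $\mathcal L_\xi g=0$ ``follows from normality'' is itself the substantive step (it is the statement that Sasakian implies K-contact) and would need its own argument rather than being invoked in passing.
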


In particular, the Reeb vector is a Killing vector field ($\nabla_\xi \xi=0$) 
and $\ric(\xi,\xi)=2n$, so a Sasakian metric is never Ricci-flat.

\medskip

Our true focus is a distinguished family of Sasakian manifolds, called  $\eta$-Einstein because they are defined in terms of the Ricci tensor just like ordinary Einstein manifolds are. They were introduced by Okumura \cite{Oku} (see also \cite{sas}), and a good starting source of study is \cite{BGM}.

\begin{definition}\label{def:eta-ES} 
A Sasakian manifold $(M^{2n+1},\varphi,\xi,\eta, g)$ is said to be {\bf $\boldsymbol \eta$-Einstein} if the Ricci tensor satisfies the constraint 
$$
\ric = \lambda g + (2n-\lambda) \eta \otimes \eta
$$
for some constant $\lambda\in \R$. 
More specifically, it is called {\bf positive} ($\eta$-Einstein) if $\lambda >-2$, {\bf null} if $\lambda=-2$, and {\bf negative} if $\lambda < -2$.\par
\end{definition}

Observe that the $\eta$-Einstein condition reads 
$$
    \ric(U,V)=\lambda g(U,V)
$$
for all Reeb-transversal vector fields $U,V$. 
Another immediate consequence of the definition is that every $\eta$-Einstein  Sasakian manifold automatically has constant scalar curvature $2n(\lambda+1)$.

\subsection{Sasakian Lie algebras}

We shall study left-invariant structures on Lie groups $G$. 
As these are uniquely determined by their value at the group's identity,  we are entitled to consider structures on Lie algebras $\g=T_eG$. Just for the purpose of collecting all the pieces of data presented earlier, let us recall that a metric Lie algebra $(\g,\pint)$ is a Lie algebra $\g$ equipped with a positive-definite, symmetric bilinear form $\pint$.
Any such inner product corresponds uniquely to a left-invariant Riemannian metric on each connected group $G$ integrating $\g$.\par
\quad  The linear map $\D:\g^* \longrightarrow \bigwedge^2 \g^*$ that sends a $1$-form 
 $\alpha$ to $\D \alpha$, 
 $$\D \alpha(x,y) := -\alpha [x,y],$$          
is the (Lie algebra) exterior derivative or Chevalley-Eilenberg differential. It extends to a graded linear map $\D:\bigwedge^k \g^* \longrightarrow \bigwedge^{k+1} \g^*$ in any degree 
which, spelt out, reads 
\[(\D\phi)(x_0, x_1, \ldots,  x_k)=\sum_{0\leq i < j \leq k}(-1)^{i+j}\phi([x_i, x_j],x_0,\ldots, \widehat{x_i}, \dots , \widehat{x_j}, \dots ,x_{k})\]
where the hat means that the term is omitted. \par
\quad If $(e_1,\ldots, e_n)$ is a basis of $\g$, we will denote by $(e^1,\ldots, e^n)$ the dual basis of $\g^*$ and conventionally write $e^{ij}:=e^{i} \wedge e^{j}$. 
Then the string 
\[(\D e^1,\D e^2,\ldots, \D e^n)\]
encodes the structure equations $[e_i, e_j]=\sum_{k}c_{ij}^ke_k$ since $\D e^k=-\sum_{i<j} c_{ij}^k e^{ij}$. This is very practical, for the Jacobi identity on $\g$ is equivalent to asking the exterior algebra $(\bigwedge \g^*, \D)$ be a complex.\smallskip

Two major players in our work are the $2$-dimensional affine Lie algebra 
and the $(2n+1)$-dimensional Heisenberg algebra, so we recall them for the record:
\begin{itemize}
\item[]\hspace{-1cm} $\aff(\R)$ is spanned by vectors $e_1,e_2$ with structure equations $(0, e^{21})$;\smallbreak 
\item[]\hspace{-1cm} $\h_{2n+1}=\operatorname{span}(e_1, \ldots, e_n, f_1, \ldots, f_n, z)$ with 
equations $(0,\ldots, 0, f^1\wedge e^1+\cdots + f^n\wedge e^n)$.
\end{itemize}

\begin{definition} A {\bf Sasakian Lie algebra} consists of a metric Lie algebra $\big(\g, \pint\big)$, a vector $\xi\in\g$, its metric dual $\eta \in \g^*$ and an endomorphism $\varphi \in \g^*\otimes\g$ subject to:
$$\begin{array}{cc}
 \varphi^{2}=-\Id +\xi\otimes \eta, \qquad
\langle \varphi x, \varphi y\rangle= \langle x, y \rangle - \eta(x)\eta(y)\\[2mm]
\D \eta(x,y) = 2 \langle x,\varphi y\rangle, \qquad N_{\varphi}=-d\eta \otimes \xi,
\end{array}$$
with $N_\varphi$ as per \eqref{Nijenhuis definicion}. 
\end{definition}
As before, we set $\Phi(x,y):=\langle x,\varphi y\rangle$. Now, the normality  condition is equivalent (see \cite[Proposition 3.1]{AD}) to the pair of requirements 
$$\ad_{\xi}\circ \varphi=\varphi \circ \ad_{\xi}, \qquad
[\varphi x, \varphi y]-[x,y]=\varphi\big([\varphi x,y]+[x,\varphi y]\big)$$
for all $x,y \in \Ker \eta$.
\medbreak

Notation-wise, let us name the nullspace of $\eta$ and the projection of the Lie bracket to this nullspace by 
$$ \h:= \Ker\eta, \qquad \theta: \h\times \h\to \h, \;\; \theta(\cdot,\cdot):= \operatorname{pr}_\h [\cdot,\cdot]\restr_{\h\times \h},
$$
where $\operatorname{pr}_\h:\g\to\h$ denotes the projection coming from the decomposition $\g=\R\xi\oplus \h$. 

The next two results, proved in \cite{AFV}, contain the starting point of our investigations. First, the centre $\z$ of a Sasakian Lie algebra is either trivial or  spanned by $\xi$ [cit, proposition 3.4].
We shall say that $\g$ has a centre or has no centre/is centreless in order to distinguish the two situations. Secondly, if the Reeb vector is central then $\theta$ is a Lie bracket on $\h$, and we mod $\xi$ out so that the algebra $(\h\iso \g/\R, \theta$) acquires a K{\"a}hler structure
$$ \pint_{\h}:=\pint\restr_{\h\times\h},\qquad J:=\varphi \restr_\h, \qquad \omega:= \Phi\restr_{\h\times\h}$$ 
induced from the geometry of $\g$. 
The same recipe, read backwards, prescribes how the central extension of an arbitrary K{\"a}hler Lie algebra (with K{\"a}hler form used as cocycle) is turned into a Sasakian algebra \cite{AFV}.
A $3$-dimensional Sasakian Lie algebra is isomorphic to either $\su(2)$ or $\slie(2, \R)$ if simple, or $\aff(\R) \times \R$ or the Heisenberg algebra $\h_3$ if it has centre \cite{Geiges, Cho-Chun}. The concern then starts with $\dim\g=5$. In that dimension a centreless Sasakian Lie algebra must be $\su(2) \times \aff(\R)$,\ $\slie(2, \R)\times \aff(\R)$, or a non-unimodular solvable algebra of the form $\R^2\ltimes \h_3$ \cite{AFV}. \par
\quad Under the assumption that $\g$ has non-trivial centre, by the way, Cartan's criterion ensures that solvability is preserved back and forth: $\g$ solvable $\iff$ $\h$ solvable.
\medbreak 

We recall that if a simply connected Lie group admits lattices then it must be unimodular. Being able to check unimodularity using the trace of the adjoint representation is clearly much better than doing so at the group's level, if only because the Haar measure might not be known explicitly. Nilpotent, reductive and compact Lie algebras/groups are unimodular, but solvable ones may not ($\aff(\R)$ being the simplest instance). Finding simply connected solvable Lie groups admitting lattices is not an easy task, and few general criteria are known to determine whether a given solvable Lie group admits lattices: one such regards nilpotent Lie groups and is due to Malcev, while Bock \cite{Bo} addresses   interesting non-nilpotent classes. Others, thought less practical ones, can be found in the encyclopedic compendium \cite[Ch.2, \S 3.7]{VGS}. We just mention that the presence of a lattice in a solvable Lie group forces so-called strong unimodularity \cite{Garland}. 
The classification of $4$-dimensional unimodular Lie groups, with and without lattices, can be found for instance in \cite{Keizo-Kamishima}. 
We also recall that geometric structures on solvable Lie groups are interesting even if they do not give rise to compact examples. For instance, $1$-connected Riemannian homogeneous spaces of non-positive curvature can be directly regarded as solvable groups $G$ with an appropriate left-invariant metric, and much is known in the unimodular case \cite{Mil, AW1,AW2}. 
Specifically for Sasakian Lie groups, the reference articles are  
\cite{Dalek-Vicente-Keizo-Kamishima, Dalek-Keizo-Kamishima, Cortes}.

\section{Einstein Sasakian algebras with centre}\label{sec:ES with centre} 

We wish to study the curvature properties of Sasakian algebras $\g$. Suppose $\xi$ is central, so that the bracket on 
\[\g=\R\xi \oplus \h\]
boils down to 
\begin{equation}\label{corchete de extension}
    [x,y]=-2\omega(x,y)\xi+\theta(x,y) 
\end{equation}
on the K{\"a}hler reduction $(\h, \theta, J, \pint_\h, \omega)$.  
\medbreak

In the sequel the sub-/super-script $^\h$ will denote any object (connection, tensor and the like) calculated on $\h$. The Riemannian connection and the  curvature tensors will be defined in terms of $\pint_\h$.
 
\begin{lemma}\label{Curvatura de g y h}
Let $\g$ be Sasakian with centre and $\h$ its K{\"a}hler quotient by $\xi$. The respective curvatures are  related by 
 \[R_{x,y}z=R^{\h}_{x,y}z+\omega(y, z)\varphi(x)-\omega(x,z)\varphi(y)-2\omega(x,y)\varphi(z)\]
    for all $x,y, z \in \h$. 
\end{lemma}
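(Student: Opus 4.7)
My plan is to compute everything from the formula $R_{x,y}z=\nabla_{x}\nabla_{y}z-\nabla_{y}\nabla_{x}z-\nabla_{[x,y]}z$, after first expressing the Levi-Civita connection $\nabla$ of $(\g,\pint)$ in terms of its counterpart $\nabla^{\h}$ on the K{\"a}hler quotient. The crucial building blocks are already available: since $\xi$ is central, Proposition \ref{nabla xi} yields $\nabla_{x}\xi=-\varphi x$ for all $x\in\g$ and, applied to $x\in\h$, also $\nabla_{\xi}x=[\xi,x]-\varphi x=-\varphi x$. It remains to handle $\nabla_{x}y$ for $x,y\in\h$.

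The first step is Koszul's formula. For $x,y,z\in\h$ the bracket \eqref{corchete de extension} gives $\langle [x,y],z\rangle=\langle\theta(x,y),z\rangle$ because $\xi\perp\h$, so the Koszul formula for $\langle\nabla_{x}y,z\rangle$ produces exactly the Koszul formula on $(\h,\theta,\pint_{\h})$; hence the $\h$-component of $\nabla_{x}y$ is $\nabla^{\h}_{x}y$. Pairing with $\xi$ instead, only the $\xi$-parts of the brackets survive and give $2\langle\nabla_{x}y,\xi\rangle=-2\omega(x,y)$. Thus
\[
\nabla_{x}y=\nabla^{\h}_{x}y-\omega(x,y)\,\xi,\qquad x,y\in\h.
\]

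With these three formulas in hand, the next step is a straightforward substitution into the curvature expression. Expanding
$\nabla_{x}\nabla_{y}z$ one picks up $\nabla^{\h}_{x}\nabla^{\h}_{y}z$, a $\xi$-term $-\omega(x,\nabla^{\h}_{y}z)\,\xi$ from differentiating the $\h$-part, and $+\omega(y,z)\,\varphi x$ from $-\omega(y,z)\nabla_{x}\xi$. Symmetrically for $\nabla_{y}\nabla_{x}z$. Finally $\nabla_{[x,y]}z$ splits, via \eqref{corchete de extension}, into $-2\omega(x,y)\nabla_{\xi}z=2\omega(x,y)\,\varphi z$ and $\nabla^{\h}_{\theta(x,y)}z-\omega(\theta(x,y),z)\,\xi$. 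Collecting the $\h$-parts gives $R^{\h}_{x,y}z$ plus exactly the three $\varphi$-terms claimed in the statement, so the only non-trivial step left is to verify that the $\xi$-coefficient
\[
-\omega(x,\nabla^{\h}_{y}z)+\omega(y,\nabla^{\h}_{x}z)+\omega(\theta(x,y),z)
\]
vanishes. This is the one point where the K{\"a}hler hypothesis is used: parallelism of $\omega$ with respect to $\nabla^{\h}$ rewrites the first two summands as $\omega(\nabla^{\h}_{y}x-\nabla^{\h}_{x}y,z)$, and torsion-freeness $\nabla^{\h}_{x}y-\nabla^{\h}_{y}x=\theta(x,y)$ cancels the remaining term.

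The main obstacle is none of these individual steps, which are routine, but rather keeping the sign and slot conventions consistent — particularly the formula $\nabla_{\xi}x=-\varphi x$ obtained from the centrality of $\xi$, and the $-\omega(x,y)\xi$ correction in $\nabla_{x}y$, since both feed the $\varphi$-terms with competing signs. Once those are pinned down, the identity follows mechanically.
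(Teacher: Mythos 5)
Your proposal is correct and follows essentially the same route as the paper: express $\nabla_x y=\nabla^{\h}_x y-\omega(x,y)\xi$ and $\nabla_\xi x=-\varphi x$ (the paper cites these from the literature, you rederive them via Koszul and centrality of $\xi$, which is fine), expand $R_{x,y}z$, and kill the $\xi$-component using parallelism of $\omega$ and torsion-freeness of $\nabla^{\h}$ — exactly the cancellation the paper performs. The signs all check out, so this matches the paper's argument in substance.
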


\begin{proof}
Take any $x,y, z \in \h$. Implementing the formulas 
   \begin{equation}\label{covariante en g y h}
      \nabla_{x}y=-\omega(x,y)\xi+ \nabla^{\h}_{x}y, \quad 
      \nabla_{\xi}x=-\varphi(x) 
   \end{equation}
lifted from \cite[Section 2]{to}, we obtain 
$$
    \nabla_{x}\nabla_{y}z=\nabla_{x}(-\omega(y, z)\xi + \nabla^{\h}_{y}z)=\omega(y, z)\varphi(x)-\omega(x,\nabla^{\h}_{y}z)\xi+\nabla^{\h}_{x}\nabla^{\h}_{y}z.
$$

Using \eqref{corchete de extension},  
$$
\nabla_{[x,y]}z=\nabla_{-2\omega(x,y)\xi+\theta(x,y)}z=2\omega(x,y)\varphi(z)-\omega(\theta(x,y),z)\xi+\nabla^{\h}_{\theta(x,y)}z,
$$
so eventually: 
\begin{equation*}\begin{split}
    R_{x,y}z &=\omega(y, z)\varphi(x)-\omega(x,\nabla^{\h}_{y}z)\xi+\nabla^{\h}_{x}\nabla^{\h}_{y}z - \omega(x,z)\varphi(y)+\omega(y,\nabla^{\h}_{x}z)\xi \\
    & \quad -\nabla^{\h}_{y}\nabla^{\h}_{x}z -2\omega(x,y)\varphi(z)+\omega(\theta(x,y),z)\xi-\nabla^{\h}_{\theta(x,y)}z\\
    &=R^{\h}_{x,y}z+\omega(y, z)\varphi(x)-\omega(x,z)\varphi(y)-2\omega(x,y)\varphi(z)-\omega(x,\nabla^{\h}_{y}z)\xi\\
    & \quad +\omega(y,\nabla^{\h}_{x}z)\xi+\omega(\theta(x,y),z)\xi.
\end{split}\end{equation*}

But $\h$ is K{\"a}hler and $\nabla^{\h}$ is skew and torsion-free, so:
\begin{equation*}\begin{split}
    -\omega(x, \nabla_{y}^{\h}z)+\omega(y,\nabla_{x}^{\h}z) & =
    -\langle x,\nabla _{y}^{\h}Jz\rangle+\langle y , \nabla^{\h}_{x}Jz\rangle = 
    \langle \nabla^{\h}_{y}x,Jz\rangle - \langle \nabla^{\h}_{x}y,Jz\rangle \\
    & = -\langle \theta(x,y),Jz\rangle = -\omega(\theta(x,y),z). 
\end{split}\end{equation*}

Substituting this in the previous equation proves the claim.
\end{proof}

\begin{lemma}\label{Ric g y Ric h}
The Ricci curvatures of $\g$ and $\h$ are related by
\[\ric(x,y)=-2\langle  x, y \rangle + \ric^{\h}(x,y), \qquad x,y \in \h.\]
\end{lemma}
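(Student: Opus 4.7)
The plan is to compute the Ricci tensor of $\g$ by tracing the curvature operator over an orthonormal basis adapted to the splitting $\g=\R\xi\oplus\h$, and then compare to $\ric^\h$ using the preceding lemma.

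Concretely, pick an orthonormal basis $(\xi,e_1,\dots,e_{2n})$ where $(e_i)$ is orthonormal in $\h$, so that
\[
\ric(x,y)=\langle R_{\xi,x}y,\xi\rangle+\sum_{i=1}^{2n}\langle R_{e_i,x}y,e_i\rangle
\]
for $x,y\in\h$. The $\xi$-summand I would handle using Proposition \ref{nabla xi}(3): applying it with the pair symmetry $\langle R_{u,v}w,z\rangle=\langle R_{w,z}u,v\rangle$ turns $\langle R_{\xi,x}y,\xi\rangle$ into $\langle R_{y,\xi}\xi,x\rangle=-\langle R_{\xi,y}\xi,x\rangle$, and Proposition \ref{nabla xi}(3) gives $R_{\xi,y}\xi=-y$. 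Hence this contribution is exactly $\langle x,y\rangle$.

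For the $\h$-summand I would plug in the formula from Lemma \ref{Curvatura de g y h}, obtaining four pieces: the first reproduces $\ric^\h(x,y)$ by definition; the piece with $\omega(x,y)\langle\varphi(e_i),e_i\rangle$ vanishes because $\varphi$ is skew with respect to $\pint$; and the remaining two cross terms are processed using the identity
\[
\sum_{i=1}^{2n}\omega(e_i,u)\,\omega(e_i,v)=\langle \varphi u,\varphi v\rangle=\langle u,v\rangle,\qquad u,v\in\h,
\]
which follows from $\omega(e_i,u)=\langle e_i,\varphi u\rangle$ and the $\varphi$-compatibility of the inner product on $\h$. A quick bookkeeping then shows these two contributions combine to $-3\langle x,y\rangle$.

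Adding everything up gives $\ric(x,y)=\langle x,y\rangle+\ric^\h(x,y)-3\langle x,y\rangle=-2\langle x,y\rangle+\ric^\h(x,y)$, as required. No real obstacle is expected here: the whole argument is a careful trace computation, and the only thing to be watchful about is tracking signs in the pair symmetry of $R$ and in the identifications $\langle\varphi u,v\rangle=-\omega(u,v)$ when rewriting the $\varphi$-dependent terms as scalar products.
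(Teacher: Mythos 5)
Your proposal is correct and follows essentially the same route as the paper: split the trace over an orthonormal basis $(\xi,e_1,\dots,e_{2n})$, evaluate the $\xi$-term via Proposition \ref{nabla xi}(3) to get $\langle x,y\rangle$, and substitute Lemma \ref{Curvatura de g y h} into the $\h$-sum, where the $\varphi$-terms contribute $-3\langle x,y\rangle$. The only (cosmetic) difference is that you use the pair symmetry of $R$ for the $\xi$-term where the paper uses the skew-symmetry of $R_{\xi,x}$ in its last two arguments.
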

\begin{proof} If we use an orthonormal basis  $\{e_i\}$  on  $\h$, 
$$
    \ric(x,y)=\langle R_{\xi,x}y,\xi \rangle + {\textstyle\sum}_i\langle R_{e_i,x}y,e_i\rangle.
$$
Invoking \cite[Lemma 2.2 (iii)]{to} and computations in \cite [Section 2]{AFV},    the first summand equals 
$$
   \langle R_{\xi,x}y , \xi \rangle = -\langle y, R_{\xi,x}\xi \rangle
    =-\langle y , \eta(x)\xi-\eta(\xi)x \rangle  
    = \langle x ,y \rangle. 
$$
The other term is found by routine calculations and Lemma \ref{Curvatura de g y h}:
\begin{equation*}\begin{split}
    {\textstyle\sum}_i\langle R_{e_i,x}y,e_i\rangle&=
    {\textstyle\sum}_i \langle R^{\h}_{e_i,x}y+\omega(x,y)\varphi(e_i)-\omega(e_i,y)\varphi(x)-2\omega(e_i,x)\varphi(y),e_i\rangle \\
    &=\ric^{\h}(x,y)+ {\textstyle\sum}_i \langle \omega(x,y)\varphi(e_i)-\omega(e_i,y)\varphi(x)-2\omega(e_i,x)\varphi(y),e_i\rangle \\
    &=\ric^{\h}(x,y)- {\textstyle\sum}_i \big(\omega(e_i,y)\langle Jx,e_i\rangle+2\omega(e_i,x)\langle Jy,e_i \rangle \big) \\
    &=\ric^{\h}(x,y)-\langle Jy,Jx \rangle - 2 \langle Jx,Jy\rangle \\
    &=\ric^{\h}(x,y)-3\langle x,y \rangle.
\end{split}\end{equation*}
Substituting proves the claim. 
\end{proof}

The next fact explains the perhaps mysterious constant $-2$ that  discriminates the $\eta$-Einstein types of Definition \ref{def:eta-ES}.

\begin{prop}\label{prop: einstein con centro}
Let $\g$ be a Sasakian Lie algebra with centre, and $\h$ the K\"ahler quotient. Then $\g$ is $\eta$-Einstein (with constant $\lambda$) 
if and only if $\h$ is K{\"a}hler-Einstein (with Einstein constant $\lambda +2$).
\end{prop}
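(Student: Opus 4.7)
The plan is to unpack the $\eta$-Einstein condition by splitting it into the $\xi$-direction and the horizontal direction, and then to use Lemma \ref{Ric g y Ric h} to transfer the horizontal equation to $\h$.

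First I would evaluate the identity $\ric = \lambda \pint + (2n-\lambda)\, \eta\otimes \eta$ on the three kinds of pairs coming from the orthogonal decomposition $\g = \R\xi \oplus \h$. On $(\xi,\xi)$ it reads $\ric(\xi,\xi) = \lambda + (2n-\lambda) = 2n$, which is automatically satisfied by Proposition \ref{nabla xi}(4). On a pair $(\xi, x)$ with $x\in \h$, both sides vanish (the left by Proposition \ref{nabla xi}(4) since $\eta(x)=0$, the right by construction), so this too is automatic. Hence the $\eta$-Einstein condition is equivalent to its restriction to $\h\times \h$, namely
\[
\ric(x,y) = \lambda \langle x,y\rangle \qquad \text{for all } x,y\in \h.
\]

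Next I would feed Lemma \ref{Ric g y Ric h} into this restricted identity. The lemma yields $\ric(x,y) = -2\langle x,y\rangle + \ric^\h(x,y)$ for $x,y\in\h$, so the above equation is equivalent to
\[
\ric^\h(x,y) = (\lambda + 2)\,\langle x,y\rangle_\h \qquad \text{for all } x,y\in \h,
\]
which is exactly the Kähler-Einstein condition on $(\h,\theta, J, \pint_\h,\omega)$ with Einstein constant $\lambda+2$. Both implications are now immediate by reading the equivalence in either direction.

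There is really no obstacle here beyond being careful that the $\xi$-components of the $\eta$-Einstein equation are automatically satisfied, which is guaranteed by Proposition \ref{nabla xi}. The whole argument is essentially bookkeeping on top of Lemma \ref{Ric g y Ric h}, and it also clarifies the origin of the shift by $-2$ that singles out null $\eta$-Einstein structures in Definition \ref{def:eta-ES}: it is precisely the constant that appears in the Ricci comparison between $\g$ and its Kähler quotient $\h$.
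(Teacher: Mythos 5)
Your proof is correct and follows essentially the same route as the paper: reduce the $\eta$-Einstein condition to its restriction on $\h\times\h$ (the $\xi$-components being automatic by Proposition \ref{nabla xi}(4)) and then apply Lemma \ref{Ric g y Ric h}. The only difference is that you spell out the automatic $\xi$-directions explicitly, which the paper leaves implicit.
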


\begin{proof}
The $\eta$-Einstein equation reads 
\[\ric(x,y)=\lambda\langle x, y \rangle, \qquad x,y \in \h,\]
and by Lemma \ref{Ric g y Ric h} this happens precisely when 
\[ \ric^{\h}(x,y)=(\lambda+2)\langle x , y \rangle, \]
which in turn is saying $\h$ is K{\"a}hler-Einstein.
\end{proof}

\begin{coro}\label{coro: eta-einstein flat}
A unimodular Sasakian algebra with centre is null $\eta$-Einstein. 
\end{coro}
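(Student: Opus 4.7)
The plan is to chain Proposition \ref{prop: einstein con centro} with the classical fact that a unimodular Kähler Lie algebra is automatically Ricci-flat (Hano's theorem, used for instance in the theory of pseudo-Kähler Lie groups). Since by the proposition the null case $\lambda=-2$ corresponds exactly to the quotient $\h$ being Kähler-Einstein with Einstein constant $0$, i.e.\ Ricci-flat, it suffices to transfer unimodularity from $\g$ to $\h$ and then quote the theorem.

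First I would verify the transfer of unimodularity. Given the splitting $\g=\R\xi\oplus\h$ with centre $\R\xi$ and bracket \eqref{corchete de extension}, pick any $x\in\h$ and compute $\tr_\g\ad_x$ in the basis $(\xi,e_1,\dots,e_{2n})$: the vector $\xi$ is annihilated by $\ad_x$, while
$$
\ad_x e_i = -2\omega(x,e_i)\,\xi + \theta(x,e_i),
$$
so only the $\h$-component of $\theta(x,e_i)$ contributes to the diagonal. Consequently $\tr_\g\ad_x=\tr_\h\ad^\h_x$, and since $\ad_\xi=0$ automatically gives trace zero, $\g$ unimodular is equivalent to $\h$ unimodular.

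Next I would invoke Hano's theorem: any Kähler Lie algebra that is unimodular is Ricci-flat. Applied to the Kähler quotient $(\h,J,\pint_\h,\omega)$, this yields $\ric^\h\equiv 0$, so $\h$ is trivially Kähler-Einstein with Einstein constant $0$. Proposition \ref{prop: einstein con centro} then forces $\g$ to be $\eta$-Einstein with $\lambda+2=0$, i.e.\ $\lambda=-2$, which by Definition \ref{def:eta-ES} is precisely the null case.

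The only non-trivial input is Hano's theorem; everything else is a bookkeeping exercise with the central-extension bracket. If one preferred a self-contained argument, the obstacle would be to produce the Ricci-vanishing directly from $\tr\ad_{\cdot}=0$ via Milnor-type formulas for $\ric^\h$ on a Kähler Lie algebra, exploiting that the Ricci form $\rho^\h(\cdot,\cdot)=\ric^\h(J\cdot,\cdot)$ can be expressed in terms of traces $\tr(J\ad_z)$ that collapse once $\tr\ad_z=0$ and the complex structure is integrable — but a direct citation of Hano's theorem keeps the argument to a few lines.
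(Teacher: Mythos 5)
Your proposal is correct and follows essentially the same route as the paper: transfer unimodularity from $\g$ to the K\"ahler quotient $\h$, invoke Hano's theorem (the paper quotes it as giving flatness, hence Ricci-flatness a fortiori), and conclude $\lambda=-2$ via the relation between $\ric$ and $\ric^{\h}$. The only cosmetic difference is that you route the last step through Proposition \ref{prop: einstein con centro} while the paper cites Lemma \ref{Ric g y Ric h} directly, and you spell out the unimodularity transfer that the paper dismisses with ``of course''.
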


\begin{proof}
Of course $\g$ is unimodular if and only if $\h$ is. But a unimodular K{\"a}hler algebra must be flat \cite{Hano}, whence 
Ricci-flat a fortiori. Proposition \ref{Ric g y Ric h} forces  $\ric(x,y)=-2\langle x , y \rangle$, making $\g$ null $\eta$-Einstein.
\end{proof}

\begin{example}[null $\eta$-Einstein solvmanifold] \label{ex: null solvmanifold}
{\rm Let $\h=\R e_1 \ltimes_{\ad}\R^5$ be the $6$-dimensional, flat K{\"a}hler Lie algebra spanned by an orthonormal basis $(e_1,e_2)\cup (e_3,e_4)\cup (e_5,e_6)$ and where the derivation
\begin{equation}\label{eq: matrix ad_e1}
    \ad_{e_1}^\h:=
\left(\begin{tabular}{c | c c | c c}
        $0$&$0$&$0$&$0$&$0$\\ \hline
        $0$&$0$&$-a$&$0$&$0$\\
        $0$&$a$&$0$&$0$&$0$\\ \hline
        $0$&$0$&$0$&$0$&$-b$\\
        $0$&$0$&$0$&$b$&$0$
\end{tabular}\right),
    \qquad a,b>0,
\end{equation}
acts on $e_1^\perp=\R^5$. Thus, we are prescribing the essential brackets on $\h$ as
$$[e_1,e_3]=ae_4,\qquad [e_1,e_4]=-ae_3, ,\qquad 
[e_1,e_5]=be_6,\qquad [e_1,e_6]=-be_5.$$
Set $Je_1=e_2$, $Je_3=e_4$,$Je_5=e_6$, so that the fundamental $2$-form is $\omega=-e^{12}-e^{34}-e^{56}$.

Let us build out of the above a Sasakian algebra $\R \xi \oplus \h$ of dimension $7$ with non-trivial centre. We extend the bracket on $\h$ using \eqref{corchete de extension}: this adds to the brackets above the further relations
$$[e_1,e_2]=2\xi,\qquad [e_3,e_4]=2\xi,\qquad [e_5,e_6]=2\xi.$$
In this way we may write 
$$\g=\R e_1 \ltimes_{\ad} \underbrace{(\h_{5}\times \R e_2)}_{\n}$$ 
where $\h_5$ stands for the Heisenberg Lie algebra spanned by $(e_3,e_4,e_5,e_6,\xi)$ (making $\g$ almost nilpotent). 
On $\g$ we now let
\begin{equation*}
    \ad_{e_1}^\g=
    \left(\begin{tabular}{c c c | c }
         & & & $0$\\ 
         & $\ad_{e_1}^\h$ & & $\vdots$\\
         & & & $0$\\  \hline
        $2$ & $0$ & $\cdots$ & $0$
\end{tabular}\right)
\end{equation*}
act on $\n$, and exponentiate it to 
\begin{equation*}
    \op{exp}(t\ad_{e_1}^\g)=
    \begin{spmatrix}
        1&0&0&0&0&0\\
        0&\cos(ta)&-\sin(ta)&0&0&0\\
        0&\sin(ta)&\cos(ta)&0&0&0\\
        0&0&0&\cos(tb)&-\sin(tb)&0\\
        0&0&0&\sin(tb)&\cos(tb)&0\\
        2t&0&0&0&0&1
    \end{spmatrix}
\end{equation*}
The basis $(e_2,e_3,e_4,e_5,e_6,\xi)$ of $\n$ is rational, so by picking 
$t\in\Z$ and $a, b\in\frac{\pi}{2}\Z$, the above matrix becomes integer-valued.
By \cite[Lemma 2.9]{Anna} we conclude that the 1-connected almost nilpotent Lie group $G$ integrating $\g$ admits a lattice $\Gamma$, and thus the compact quotient $\Gamma \backslash G$ is a Sasakian solvmanifold. The latter is null $\eta$-Einstein as $G$ is unimodular.} \par
\quad {\rm The example can be easily generalised to any odd dimension greater than 7 by augmenting \eqref{eq: matrix ad_e1} using skew-symmetric blocks $\begin{pmatrix} 0 & -x \\ x & 0 \end{pmatrix}$, $x>0$, along the diagonal.}
\end{example}

\bigbreak

\section[Centreless Sasakian algebras with $\dim \Im \ad_{\xi}=2$]{Centreless Sasakian algebras  with \texorpdfstring{$\dim \Im  \ad_{\xi}=2$}{}}\label{sec:centreless}

In this section the working assumption is that $\g$ is a $(2n+1)$-dimensional Sasakian Lie algebra with no centre. The Reeb vector no longer commutes with everything, and it was shown in \cite[Corollary 3.12]{AFV} that there is an orthogonal splitting
  \[\mathfrak{g}= \Ker\ad_{\xi} \oplus  \Im \ad_{\xi}.\]
  Let us call these summands 
  $$\s:= \Ker \ad_{\xi}\quad \text{ and }\quad \q:=\Im  \ad_{\xi}.$$  
The kernel of $\ad_{\xi}$ is a Sasakian Lie algebra in which $\xi$ is central \cite[Proposition 3.13]{AFV}. Hence we may apply the reduction process of the previous section to obtain
$$\s= \R \xi \oplus \h_{1},$$ 
where  $(\h_{1}, \theta)$  is K{\"a}hler and 
\begin{equation}\label{eq: h1-bracket}
 [x,y]=-2\omega(x,y)\xi + \theta(x,y), \quad [x,\xi]=0, \qquad x,y \in \h_1.
\end{equation} 
Therefore $\g$ decomposes orthogonally as vector space: 
\begin{equation}\label{eq:g=xi+h1+w}
\mathfrak{g}= \mathbb{R} \xi \oplus \underbrace{\h_{1} \oplus \q}_{\h}.
\end{equation}
From now on, whenever $\h_1$ is considered as a Lie algebra we should keep in mind that the Lie bracket on it is $\theta$, as explained earlier.

Let us begin to examine the finer geometric structure on the above splitting. 
The Jacobi identity for $\xi$, $x \in \h_1$  reads 
\begin{equation}\label{eq: ad-x}
\ad_{x}\restr_{{\q}} \circ \ad_{\xi}\restr_{\q} = \ad_{\xi}\restr_{\q} \circ \ad_x\restr_{\q}.
\end{equation}

\medskip

We shall concentrate on the case 
$$\dim \q=2.$$
We saw at the beginning that the mapping $\ad_{\xi}$ is skew-symmetric and commutes with $\varphi$, so we can find an orthonormal basis $(u,v)$ of ${\q}$ with  $\varphi u=v$ such that 
\begin{equation}\label{eq:ad_xi w}
\ad_{\xi}\restr_{\q}=
\begin{pmatrix} 0 & -k \\ k & 0 \end{pmatrix}
\end{equation}
for some non-zero constant $k$. Then \eqref{eq: ad-x} says there exist linear forms $\mu, \gamma \in \h_{1}^*$ such that 
\begin{equation}\label{eq:ad_x w}
\ad_{x}\restr_{\q}=
\begin{pmatrix} \mu(x) & -\gamma(x) \\ \gamma(x) & \mu(x)
\end{pmatrix},\qquad x\in \h_1.
\end{equation}

To begin with, we claim that 
\begin{equation} \label{[u,v]} 
    [u,v]=2\xi + H_{0}\quad \text{ for some }\ H_{0} \in \h_1.
\end{equation}
To see this, the Jacobi identity implies $[\xi,[u,v]]=0$, so $[u,v] \in \s$ and $[u,v]=t\xi+ H_{0}$ for $t \in \R$ and some vector $H_{0} \in \h_1$. 
The Reeb vector's factor must equal 
$t=t\eta(\xi)= \eta(t\xi + H_{0})= -\D \eta(u,v)= -2\langle u , \varphi (v) \rangle= 2$.

Secondly, applying the Jacobi identity with $u,v$ and $x \in  \h_1$ we obtain
\begin{equation*}\begin{split}
    0 & = \big[x,[u,v]\big]+\big[u,[v, x]\big]+\big[v,[x,u]\big]\\
      &=[x,2\xi +H_{0}]+[u,\gamma(x)u-\mu(x)v]+[v,\mu(x)u+\gamma(x)v]\\
      & = [x,H_{0}]- 2\mu(x) [u,v], 
\end{split}\end{equation*}
so that $ 2\mu(x)(2\xi+H_0)=-2\omega(x,H_0)\xi+ \theta(x,H_0)$ and finally 
\[ \mu(x)=-\frac{1}{2} \omega(x,H_0)=-\frac{1}{2}\langle x , JH_0\rangle,  \quad \text{ and }\quad 2\mu(x)H_0= \theta(x,H_0).\]
Finally, again Jacobi on $x,y \in \h_1$ and $u$ tells 
\begin{equation*}\begin{split}
    0 &= \underaccent{x, y, u}{\mathfrak{S}}\ \big[x,[y,u]\big] \\
      &=[x,\mu(y)u+ \gamma(y)v]+[y, -\mu(x)u-\gamma(x)v]+[u,-2\omega(x,y)\xi+\theta(x,y)]\\
      &=\bigg(\big(2\omega(x,y)k-\gamma\big(\theta(x,y)\big)\bigg) v-\mu\big(\theta(x,y)\big)u
\end{split}\end{equation*}
As $u,v$ were chosen to be independent, either coefficient vanishes.
Put more plainly, 
\[ \omega = -\frac{1}{2k}\,\D^{\h_1} \gamma\]  
is exact and $\mu$ is closed: 
\[\D^{\h_1}\mu =0.\]

\medskip

The normality condition does not provide additional information nor constraints. 
To sum up, we have proved the following. 

\begin{prop}\label{prop: compatibilidad}
Let  $\g$ be a Sasakian Lie algebra with no centre and $\dim \g\geq 5$, written as \eqref{eq:g=xi+h1+w}. If $\dim \Im \ad_{\xi}=2$ then 
    \begin{enumerate}
    \item $\h_1$ is a K{\"a}hler-exact algebra, meaning $\omega=- \dfrac{1}{2k}\,\D^{\h_1}\gamma$;
    \item $\mu\in\h_1^*$ is closed:\ $\mu(x)=-\dfrac{1}{2}\langle x,JH_0\rangle$, and further\  $2\mu(x)H_0= \theta(x,H_0)$.\hfill 
\end{enumerate}
\end{prop}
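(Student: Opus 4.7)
My plan is to derive the three conclusions as consequences of the Jacobi identity applied to three carefully chosen triples from the decomposition $\g=\R\xi\oplus\h_1\oplus\q$. The preparatory data is already in place from the discussion preceding the statement: because $\ad_\xi$ is skew-symmetric on $\g$ and commutes with $\varphi$, there is an orthonormal basis $(u,v)$ of $\q$ with $\varphi u=v$ in which $\ad_\xi\restr_{\q}$ takes the rotation form \eqref{eq:ad_xi w}, and the commutation \eqref{eq: ad-x} forces $\ad_x\restr_{\q}$ to commute with that rotation, producing the block \eqref{eq:ad_x w} for some $\mu,\gamma\in\h_1^*$.

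First I would apply Jacobi to $(\xi,u,v)$: since $\ad_\xi$ annihilates the result, $[u,v]\in\s=\R\xi\oplus\h_1$, so I may write $[u,v]=t\xi+H_0$ for some $t\in\R$ and $H_0\in\h_1$. Pairing with $\eta$ and using $\D\eta=2\Phi$ together with $\varphi v=\varphi^{2}u=-u$ (which relies on $u\in\ker\eta$) pins down $t=2$, establishing \eqref{[u,v]}.

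Next, Jacobi on $(x,u,v)$ with $x\in\h_1$ is to be expanded through \eqref{eq: h1-bracket}, \eqref{[u,v]} and the matrix form \eqref{eq:ad_x w}. Thanks to the commuting block structure, both $[u,\ad_x v]$ and $[v,\ad_x u]$ reduce to multiples of $[u,v]$, and their sum contributes $-2\mu(x)(2\xi+H_0)$; meanwhile $[x,H_0]$ splits through \eqref{eq: h1-bracket} into $-2\omega(x,H_0)\xi+\theta(x,H_0)$. Separating the identity along $\R\xi$ and $\h_1$ yields simultaneously $\mu(x)=-\tfrac{1}{2}\omega(x,H_0)=-\tfrac{1}{2}\langle x,JH_0\rangle$ and $\theta(x,H_0)=2\mu(x)H_0$.

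Finally, Jacobi on $(x,y,u)$ with $x,y\in\h_1$ is the richest triple. Each bracket unfolds through \eqref{eq: h1-bracket} and \eqref{eq:ad_x w}; all quadratic combinations in $\mu$ and $\gamma$ cancel because the matrices \eqref{eq:ad_x w} commute, leaving a two-component identity in the basis $(u,v)$. The $u$-coefficient collapses to $-\mu(\theta(x,y))=0$, which by the Chevalley-Eilenberg formula $\D^{\h_1}\mu(x,y)=-\mu(\theta(x,y))$ is precisely the closedness of $\mu$; the $v$-coefficient becomes $2k\omega(x,y)-\gamma(\theta(x,y))=0$, rearranging to $\omega=-\tfrac{1}{2k}\D^{\h_1}\gamma$, the K{\"a}hler-exactness of $\h_1$. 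I expect the normality condition will contribute nothing beyond what is already built into $\D\eta=2\Phi$ and $[\ad_\xi,\varphi]=0$, so the principal difficulty is purely organisational: tracking signs cleanly through the block-matrix expansions and recognising the resulting linear identities as closedness/exactness statements in $(\bigwedge\h_1^*,\D^{\h_1})$.
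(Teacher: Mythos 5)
Your proposal is correct and follows essentially the same route as the paper: the same three Jacobi triples $(\xi,u,v)$, $(x,u,v)$ and $(x,y,u)$, the same cancellation of the quadratic $\mu,\gamma$ terms, and the same identification of the surviving $u$- and $v$-coefficients with $\D^{\h_1}\mu=0$ and $\omega=-\tfrac{1}{2k}\D^{\h_1}\gamma$. The paper likewise records that normality adds no further constraints, so there is nothing to add.
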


From $(2)$ it follows that  $0=\mu(\theta(x,y))=-\frac{1}{2}\langle \theta(x,y), JH_0\rangle$, so $JH_0$ is perpendicular to the derived algebra $\theta(\h_1,\h_1)$.
Taking $x=J H_0$ 
gives $-\| H_0 \| ^{2}H_{0}=\theta(JH_{0},H_{0})$, so condition $(1)$ forces
$$    -\| H_{0} \| ^{2}\gamma (H_0)=\gamma\big(\theta(JH_{0},H_{0})\big)
    =-d\gamma(JH_0,H_0)     =2k\omega(JH_0,H_0)     =2k\| H_{0}\| ^{2}.
$$
Therefore, provided $H_0\neq 0$, we end up determining the 
coefficient:
\begin{equation}\label{gamma en H0}
    \gamma(H_0)=-2k
\end{equation}

The above are only necessary conditions, but they explain that we can reverse-engineer the construction, to show the following.

\begin{prop}\label{h1 crea sasaki sin centro}
    Let  $(\h_1, \theta(\cdot,\cdot), \pint)$ be a metric Lie algebra of dimension $2m$  equipped with a K{\"a}hler-exact structure $\omega=-\frac{1}{2k}\D^{\h_1}\gamma$, for some $k\neq 0$, $\gamma \in \h_{1}^*$.
 Assume there exist an element $H_0 \in \theta(\h_1,\h_1)$ obeying the compatibility conditions:
\begin{align}
    & JH_0 \in \theta(\h_1,\h_1)^{\perp}  \label{eq: JH0}\\ 
    & 2\mu(x)H_0=\theta(x,H_0) \quad \text{for every } x \in \h_1, \label{eq: importante}
\end{align}
where $\mu\in\h_1^*$ is the closed 1-form given by $\mu(x)=-\frac12\langle x,JH_0\rangle$, $x\in \h_1$. \par 
Call $\s= \R\xi \oplus \h_1$ the (Sasakian) central extension of $\h_1$ by the cocycle $\omega$, as described in \S \ref{sec:ES with centre}.  
Choose a $2$-plane ${\q}$ with orthonormal basis $(u,v)$ and let 
\[ \g=\s \oplus {\q}\]
with bracket prescribed by \eqref{eq: h1-bracket}, \eqref{eq:ad_xi w}, \eqref{eq:ad_x w} and \eqref{[u,v]}. This makes  $\g$ a Lie algebra of dimension $2m+3$ with $\s$ as subalgebra. \par
Moreover, if we extend the structure $(\varphi, \eta, \xi, \pint)$ on $\s$ to $\g$ by 
$$\varphi (u)=v,\qquad \eta({\q})=0,$$  
and the rest in the natural way, $(\g, \varphi, \eta, \xi , \pint)$ becomes a Sasakian Lie algebra with no centre.
\end{prop}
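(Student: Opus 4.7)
The plan has four steps: verify the Jacobi identity, verify the almost contact metric axioms, verify the contact-type condition $\D\eta = 2\Phi$, and verify normality. Once these are done, trivial centre will follow from a single observation together with the dichotomy of \cite[Proposition 3.4]{AFV} recalled in the Preliminaries.

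Jacobi will be the step with real content. Triples internal to $\s = \R\xi \oplus \h_1$ are built into the central-extension construction, so only triples mixing $\s$ with $\q$ need checking. The ones involving $\xi$, namely $(\xi, x, u)$ and $(\xi, u, v)$, follow immediately from $[\xi, \h_1] = 0$ together with the observation that $\ad_\xi\restr_\q$ and $\ad_x\restr_\q$ commute because each is of the form $a\Id + b\,\varphi\restr_\q$. The meaty triple is $(x, y, u)$ with $x, y \in \h_1$: working it out, the $u$-coefficient comes to $-\mu(\theta(x, y)) = \tfrac{1}{2}\langle \theta(x, y), JH_0\rangle$, which vanishes precisely because of the orthogonality $JH_0 \perp \theta(\h_1, \h_1)$ from \eqref{eq: JH0}, while the $v$-coefficient simplifies to $2k\omega(x, y) - \gamma(\theta(x, y))$ and vanishes by the K\"ahler-exactness $\omega = -\tfrac{1}{2k}\D^{\h_1}\gamma$. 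Finally, the triple $(x, u, v)$ with $x \in \h_1$ splits along $\xi$ and $\h_1$: the $\xi$-part equals $-2\omega(x, H_0) - 4\mu(x)$ and vanishes by the prescribed definition $\mu(x) = -\tfrac{1}{2}\langle x, JH_0\rangle$, whereas the $\h_1$-part equals $\theta(x, H_0) - 2\mu(x) H_0$ and vanishes by \eqref{eq: importante}.

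With Jacobi in place, the remaining axioms reduce to routine block-wise checks. The tensor $\varphi$ preserves the orthogonal decomposition $\s \oplus \q$ and acts on $\q$ by $\varphi u = v$, $\varphi v = -u$; accordingly the identity $\varphi^2 = -\Id + \xi \otimes \eta$ and the compatibility $\langle \varphi x, \varphi y\rangle = \langle x, y\rangle - \eta(x)\eta(y)$ reduce to the already-known statements on $\s$ plus the orthonormality of $(u, v)$. For $\D\eta = 2\Phi$ only pairs involving $\q$ need attention, and the sole non-trivial case is $\D\eta(u, v) = -\eta(2\xi + H_0) = -2 = 2\langle u, \varphi v\rangle$. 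For normality I would invoke the equivalent pair of conditions on $\Ker\eta$ recalled after the definition of Sasakian Lie algebra: the commutation $\ad_\xi \circ \varphi = \varphi \circ \ad_\xi$ holds trivially on $\s$ and on $\q$ since both sides are scalar multiples of $\varphi\restr_\q$, while the integrability-type condition splits along $\h_1$ (where it is guaranteed by the K\"ahler structure), mixed pairs $(x, u)$ (where $\ad_x\restr_\q$ commutes with $\varphi\restr_\q$), and the pair $(u, v)$ (a direct check using $\varphi v = -u$).

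The main obstacle will be keeping tidy bookkeeping of the Jacobi identity for $(x, y, u)$, since that is where both K\"ahler-exactness and the orthogonality $JH_0 \in \theta(\h_1, \h_1)^\perp$ must act in concert; all other verifications are essentially linear-algebraic. To conclude, $[\xi, u] = kv \neq 0$ shows $\xi$ is not central, and therefore by \cite[Proposition 3.4]{AFV} the centre of $\g$ must be trivial.
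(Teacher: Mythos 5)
Your proposal is correct and follows exactly the route the paper intends: the paper omits an explicit proof because the Jacobi checks for the triples $(x,y,u)$ and $(x,u,v)$ are precisely the forward computations carried out just before Proposition \ref{prop: compatibilidad}, run in reverse, and the paper likewise notes that normality imposes no further constraints. Your case analysis is complete and the concluding appeal to \cite[Proposition 3.4]{AFV} via $[\xi,u]=kv\neq 0$ is the right way to get trivial centre.
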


As a matter of fact, what we have is a full-on characterisation:
\begin{thm}\label{carac sasaki}
Any $(2n+1)$-dimensional Sasakian Lie algebra  $(\g, \varphi, \eta, \xi , \pint)$ with trivial centre and $\dim \Im  \ad_{\xi}=2$ arises by the recipe of the previous proposition.
\end{thm}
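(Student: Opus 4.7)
The theorem is essentially a converse to Proposition \ref{h1 crea sasaki sin centro}, and the point is that the analysis preceding that proposition already produces every ingredient. My plan is therefore to start from an arbitrary centreless Sasakian $(\g,\varphi,\eta,\xi,\pint)$ with $\dim\Im\ad_\xi=2$, read off intrinsically the data $(\h_1,\omega,k,\gamma,H_0)$ required as input to Proposition \ref{h1 crea sasaki sin centro}, and check that the recipe prescribed there reproduces $\g$ on the nose.

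First I would use the orthogonal decomposition $\g=\R\xi\oplus\h_1\oplus\q$ from \eqref{eq:g=xi+h1+w}, noting that $\s=\Ker\ad_\xi=\R\xi\oplus\h_1$ is a Sasakian subalgebra with central Reeb vector by \cite[Proposition 3.13]{AFV}. The central-extension reduction of Section \ref{sec:ES with centre} then yields the K{\"a}hler quotient $(\h_1,\theta,J,\pint_{\h_1},\omega)$ directly from $\g$, supplying the base ingredient of the recipe. Next I would pick a $\varphi$-adapted orthonormal basis $(u,v)$ of $\q$ with $\varphi u=v$, in which $\ad_\xi\restr_\q$ takes the canonical form \eqref{eq:ad_xi w} with some $k\neq 0$; the Jacobi identity \eqref{eq: ad-x} forces $\ad_x\restr_\q$ into the shape \eqref{eq:ad_x w}, defining the $1$-forms $\mu,\gamma\in\h_1^*$. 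The vector $H_0\in\h_1$ is finally read off from \eqref{[u,v]}, exactly as in the discussion following that equation.

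At this point I would invoke Proposition \ref{prop: compatibilidad}, whose proof already yields $\omega=-\tfrac{1}{2k}\D^{\h_1}\gamma$, $\D^{\h_1}\mu=0$, $\mu(x)=-\tfrac12\langle x,JH_0\rangle$ and $\theta(x,H_0)=2\mu(x)H_0$, i.e.\ the K{\"a}hler-exact structure on $\h_1$ together with condition \eqref{eq: importante}. Condition \eqref{eq: JH0} drops out as well: $\mu$ being closed amounts to $\mu\circ\theta=0$, so $\langle\theta(x,y),JH_0\rangle=-2\mu(\theta(x,y))=0$ for all $x,y\in\h_1$, and thus $JH_0\perp\theta(\h_1,\h_1)$. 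The data $(\h_1,\omega,k,\gamma,H_0)$ extracted from $\g$ therefore meets every hypothesis of Proposition \ref{h1 crea sasaki sin centro}.

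It remains to observe that the formulas \eqref{eq: h1-bracket}, \eqref{eq:ad_xi w}, \eqref{eq:ad_x w} and \eqref{[u,v]} exhaust the Lie bracket of $\g$: relative to the splitting $\g=\R\xi\oplus\h_1\oplus\q$, every bracket decomposes into the five pieces $[\xi,\h_1]=0$, $[\xi,\q]$, $[\h_1,\h_1]$, $[\h_1,\q]$, $[\q,\q]$, each of which has just been identified. The same remark applies to the Sasakian tensors $(\varphi,\eta,\xi,\pint)$, which agree on every summand with the prescription of Proposition \ref{h1 crea sasaki sin centro}. Hence $\g$ is precisely the Sasakian algebra that recipe produces from $(\h_1,\omega,k,\gamma,H_0)$. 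I expect the only real obstacle to be notational, namely keeping the identifications straight across the two constructions; conceptually the theorem is a bookkeeping exercise once Proposition \ref{prop: compatibilidad} is in hand.
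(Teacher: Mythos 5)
Your proposal is correct and matches the paper's approach: the paper gives no separate proof of Theorem \ref{carac sasaki}, relying instead on exactly the analysis you describe (the Jacobi-identity computations establishing Proposition \ref{prop: compatibilidad} together with the remarks that follow it), so your write-up is the intended argument made explicit. The only hypothesis of Proposition \ref{h1 crea sasaki sin centro} you do not explicitly verify is $H_0\in\theta(\h_1,\h_1)$, but this is immediate from \eqref{eq: importante} with $x=JH_0$, which gives $\theta(JH_0,H_0)=-\|H_0\|^2H_0$, exactly as the paper notes.
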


\begin{examples}
{\rm Take the $4$-dimensional algebra $ \h_1=\aff(\R) \times \aff(\R)$. Choose an orthonormal basis  $(e_j)$ with brackets
$$\big(0,-e^{12},0,-e^{34}\big)$$
Then $\omega =\D(e^{2}+e^{4}):=-\frac 1{2k}\D\gamma$ defines a K{\"a}hler-exact structure.\par 
\quad i) Pick the zero vector for $H_0$, so that $\mu =0$. The first step  in the extension is to introduce a vector $e_7$ to build $\s=\R e_7 \oplus \h_1 $. Next, we call ${\q}$ the two-plane spanned by vectors $e_5, e_6$, orthonormal by decree. Finally, let $\varphi e_5=e_6$ and  $\varphi\restr_{\h_1}=J$. 
Proposition \ref{h1 crea sasaki sin centro} says that  $\g=\s \oplus {\q}$, with Lie structure
$$\big(0,-e^{12},0,-e^{34}, 
  k (2 e^{2} +2 e ^{4}+e^{7}) e^{6}, k (2 e^{2} +2 e^{4}-e^{7}) e^{5}, 
 -2 (e ^{12}+e^{34}+e^{56}) 
\big),$$
is a $7$-dimensional centreless Sasakian algebra. It cannot  be solvable because $\R e_7 \oplus {\q}$ is simple: $\su(2)$ if $k>0$ and $\mathfrak{sl}(2,\R)$ for $k<0$.\par
\quad ii) Another possible choice is $H_0=e_2$, since $JH_0 \in \theta(\h_1, \h_1)^\perp=\op{span}\{e_1,e_3\}$. This gives $\mu=\frac{1}{2}e^1$, and the end result is now slightly more entangled, algebraically: 
$$\big(0,  -e^{12}-e^{56}, 0, -e^{34}, 
 -k (2 e^{2} +2 e ^{4}-e^{7}) e^{6} -\tfrac{1}{2}e^{15}, 
 k (2 e^{2} +2 e^{4}+e^{7}) e^{5} -\tfrac{1}{2}e^{16}, 
 -2(e^{12}+e^{34}+e^{56}) 
 \big),$$
In contrast to the instance in i), this one is solvable. }
\end{examples}
These two examples are indicative of a pattern that we shall address next. 

\bigbreak

\subsection{Issues of solvability} 
\medbreak

We wish to discuss the solvability of a centreless Sasakian Lie algebra $\g$ of the type characterised in Theorem \ref{carac sasaki}. 
 It turns out that the vector $H_0$ appearing in \eqref{[u,v]} plays an important role in the matter.

\begin{prop}\label{cond g soluble}
A Sasakian Lie algebra $\g$ with no centre and $\dim \Im \ad_{\xi}=2$ is solvable, if and only if $H_{0}\neq 0$ and $\h_1$ is solvable.
\end{prop}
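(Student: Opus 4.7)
The plan is to prove the two implications separately, in both cases controlling the $3$-dimensional subalgebra generated by $\xi$ and $\q$ via the vector $H_0$.

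For the forward direction, note that $\s:=\Ker\ad_\xi=\R\xi\oplus\h_1$ is a Sasakian subalgebra of $\g$ with $\xi$ central, so if $\g$ is solvable then so is $\s$, and hence so is the K{\"a}hler quotient $\h_1\cong\s/\R\xi$. It remains to force $H_0\neq 0$: if $H_0=0$, equations \eqref{[u,v]} and \eqref{eq:ad_xi w} give the closed brackets $[u,v]=2\xi$, $[\xi,u]=kv$, $[\xi,v]=-ku$, so $\op{span}(\xi,u,v)$ is a $3$-dimensional subalgebra whose derived ideal (thanks to $k\neq 0$) equals itself. This contradicts the solvability of $\g$.

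For the converse, assume $H_0\neq 0$ and $\h_1$ solvable. Set $w:=2\xi+H_0$ and consider
\[\mathfrak{m}:=\op{span}(u,v,w).\]
Combining the identities in Proposition \ref{prop: compatibilidad} with $\mu(H_0)=-\tfrac{1}{2}\langle H_0,JH_0\rangle=0$ (since $J$ is $\pint$-skew) and $\gamma(H_0)=-2k$ from \eqref{gamma en H0}, one gets $[w,u]=[w,v]=0$ and $[u,v]=w$, whence $\mathfrak{m}$ is isomorphic to the Heisenberg algebra $\h_3$ and in particular nilpotent. The ideal condition $[\g,\mathfrak{m}]\subset\mathfrak{m}$ is plain on $u,v$ (their brackets with $\xi$ and with $\h_1$ all land in $\q$), and on $w$ one computes $[x,w]=2\mu(x)w$ for every $x\in\h_1$, using the relation $\omega(x,H_0)=-2\mu(x)$ that follows from the K{\"a}hler-exactness $\omega=-\tfrac{1}{2k}\D^{\h_1}\gamma$ together with $\theta(x,H_0)=2\mu(x)H_0$.

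It then remains to exhibit $\g/\mathfrak{m}$ as solvable. Since $\h_1\cap\mathfrak{m}=0$ and dimensions match, the composition $\h_1\hookrightarrow\g\twoheadrightarrow\g/\mathfrak{m}$ is a linear isomorphism; transported through it, the congruence $\xi\equiv-H_0/2\pmod{\mathfrak{m}}$ forces the induced bracket to be
\[\tilde\theta(x,y)=\theta(x,y)+\omega(x,y)H_0,\]
a modification of $\theta$ by the exact $2$-cocycle $\omega\otimes H_0$. The essential observation is that $H_0$ becomes $\tilde\theta$-central:
\[\tilde\theta(x,H_0)=\theta(x,H_0)+\omega(x,H_0)H_0=2\mu(x)H_0-2\mu(x)H_0=0.\]
Hence $\R H_0$ is an abelian ideal of $(\h_1,\tilde\theta)$, and the quotient carries the bracket $\theta$ modulo $\R H_0$ (a $\theta$-ideal by Proposition \ref{prop: compatibilidad}(2)), which is solvable as a quotient of $(\h_1,\theta)$. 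Therefore $\g/\mathfrak{m}\cong(\h_1,\tilde\theta)$ is solvable, and combined with the nilpotency of $\mathfrak{m}$, this yields the solvability of $\g$.

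The main obstacle in executing this plan is recognising the correct ideal: the obvious guess $\op{span}(\xi,u,v)$ fails even to be a subalgebra once $H_0\neq 0$, and only the tilted choice $\op{span}(u,v,2\xi+H_0)$ absorbs the $\q\times\q$ bracket into a Heisenberg piece while simultaneously deforming $\theta$ on $\h_1$ by exactly the amount needed to centralise $H_0$. Once $\mathfrak{m}$ is identified, the remaining work reduces to bookkeeping with the compatibility formulae of Proposition \ref{prop: compatibilidad}.
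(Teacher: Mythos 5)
Your argument is correct. The forward direction is essentially the paper's, with a small variation: where the paper derives the contradiction from $\xi\in[\g,\g]$ by noting that $\ad_\xi$ would be both nilpotent and skew (hence zero), you instead exhibit $\operatorname{span}(\xi,u,v)$ as a perfect subalgebra; both work. The converse is where you genuinely diverge. The paper tracks the derived series directly: it computes $\g^{(1)}=\operatorname{span}\{2\xi+H_0,u,v\}+\s^{(1)}$, shows that $2\xi+H_0$ commutes with $\q$ and with $\s^{(1)}$ and that $[\q,\s^{(1)}]=0$, and concludes $\g^{(3)}=\s^{(3)}$, so solvability of $\g$ reduces to that of $\s$. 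You instead package the same bracket identities (all resting on $\gamma(H_0)=-2k$, $\mu(H_0)=0$ and Proposition \ref{prop: compatibilidad}(2)) into a structural statement: $\mathfrak{m}=\operatorname{span}(u,v,2\xi+H_0)$ is a Heisenberg ideal, and the quotient $\g/\mathfrak{m}$ is identified with $(\h_1,\tilde\theta)$, where $\tilde\theta=\theta+\omega\otimes H_0$ is a central-type deformation that makes $H_0$ central; solvability then follows from the extension $0\to\R H_0\to(\h_1,\tilde\theta)\to(\h_1,\theta)/\R H_0\to 0$. I checked the key computations ($[x,2\xi+H_0]=2\mu(x)(2\xi+H_0)$, $\tilde\theta(x,H_0)=0$, $\h_1\cap\mathfrak{m}=0$) and they are right. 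Your route costs a little more setup but buys more information: it exhibits the explicit Levi-style decomposition of $\g$ as a nilpotent Heisenberg ideal with solvable quotient isomorphic to a deformation of $\h_1$, rather than merely bounding the derived series.
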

\begin{proof} 
$(\Longrightarrow$)\ As $\g$ is solvable, the derived algebra $[\g,\g]$ is nilpotent. 
We argue by contradiction and suppose $H_{0}= 0$. Then  equation 
\eqref{[u,v]} says $\xi \in [\g,\g]$ and therefore $\ad_{\xi}$ is nilpotent. It is also skew as the Reeb vector is Killing, so it should vanish altogether, against the assumption. This proves $H_0$ cannot vanish. 
But $\s$ is a subalgebra in $\g$ so it is solvable. It also is a central extension of $\h_1\iso \bigslant{\s}{\R\xi}$, so $\h_1$ must be solvable too.\\
$(\Longleftarrow$)\ Now, recall that $\s=\Ker \ad_\xi$ and we fixed a basis $(u,v)$ for $\q$.   
Let us inspect the derived series, beginning from 
\begin{equation*}\begin{split}
\g^{(1)} &= \text{span}\{2\xi+H_{0}, u, v\} + \s^{(1)},\\
\s^{(1)} &= \{-2\omega(x,y)\xi+\theta(x,y) \mid  x,y \in \h_1\}.
\end{split}\end{equation*}
We have $[2\xi+H_0,{\q}]=0$ 
because 
$$[2\xi+H_{0},u] = 2[\xi,u]+[H_{0},u] = 
2(kv)+\mu(H_0)u+\gamma(H_0)v \stackrel{\eqref{gamma en H0}}{=} 
2kv-2kv  =0,$$
and similarly $[2\xi+H_0,v]=0$.  Secondly, $[2\xi+H_0,\s^{(1)}]=0$, for  
\begin{equation*}\begin{split}
  [2\xi+H_{0}, -2\omega(x,y)\xi+\theta(x,y)] &=[H_{0},\theta(x,y)]
    =-2\omega\big(H_{0},\theta(x,y)\big)\xi+\theta\big(H_{0},\theta(x,y)\big)\\
    &=2\langle JH_0, \theta(x,y)\rangle\xi-\theta\big(\theta(x,y),H_0\big) \\
 &   \stackrel{\text{Prop.} \ref{prop: compatibilidad}}{=} -2\mu\big(\theta(x,y)\big)H_0  =0.
\end{split}\end{equation*}
Finally, $[{\q},\s^{(1)}]=0$ since
\begin{equation*}\begin{split}
    [u,-2\omega(x,y)\xi+\theta(x,y)]& 
    =2\omega(x,y)(kv)-\gamma\big(\theta(x,y)\big)v \\
  &  =2\omega(x,y)(kv) + d^{\h_1} \gamma(x,y)v \\
&     \stackrel{\text{Prop.} \ref{prop: compatibilidad}}{=} 2\omega(x,y)(kv)-\big(2k\omega(x,y)\big)v  =0,
\end{split}\end{equation*}
and $[v,\s^{(1)}]=0$ alike.\\
From all of that we deduce $\g^{(2)}=[{\q},{\q}]+\s^{(2)}=\text{span}\{2\xi+H_0\}+\s^{(2)}$ 
and hence $\g^{(3)}=\s^{(3)}$. In other words the derived series of  $\g$ and $\s$ agree from the third step onwards. But $\s$ is solvable since $\h_1\iso \bigslant{\s}{\R\xi}$ is, by hypothesis.
\end{proof}
\bigbreak

If we choose $H_{0}=0$ in the construction, $\g$ will therefore have a semisimple part in its Levi decomposition. Indeed,  

\begin{lemma}\label{lemma: radical}
Let $\g$ be a Sasakian Lie algebra with trivial centre and $\dim \Im \ad_{\xi}=2$. If $H_0=0$ then 
$\rad(\g)= \left\{ -\dfrac{\gamma(h)}{k} \xi+h \mid  h\in \rad(\h_1) \right\}.$
\end{lemma}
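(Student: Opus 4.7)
The plan is to show that the explicit subspace $\mathfrak{r}_0:=\{-\gamma(h)/k\,\xi + h : h\in\rad(\h_1)\}$ is a solvable ideal of $\g$, then exhibit a semisimple vector-space complement $S$ with $\g=\mathfrak{r}_0\oplus S$; the uniqueness of the Levi decomposition will then identify $\mathfrak{r}_0$ with $\rad(\g)$.

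My first step is to extract a single algebraic identity from the K\"ahler-exactness $\omega=-\frac{1}{2k}\D^{\h_1}\gamma$ of Proposition \ref{prop: compatibilidad}, namely $2k\,\omega(x,y)=\gamma(\theta(x,y))$ for $x,y\in\h_1$. Using this, together with the fact that $\rad(\h_1)$ is an ideal in $(\h_1,\theta)$, I would check that the $\g$-bracket of two elements of $\mathfrak{r}_0$ lies in $\mathfrak{r}_0$ (the coefficients of $\xi$ match precisely because of the above identity), and that the natural map $\mathfrak{r}_0\to\rad(\h_1)$, $-\gamma(h)/k\,\xi+h\mapsto h$, is a Lie-algebra isomorphism, so $\mathfrak{r}_0$ is solvable. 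For the ideal property I would then verify $[\g,\mathfrak{r}_0]\subset\mathfrak{r}_0$ by checking the three cases $[\xi,\cdot]$ (zero because $\xi$ is central in $\s$), $[y,\cdot]$ for $y\in\h_1$ (same identity as above), and $[u,\cdot]$, $[v,\cdot]$: here the hypothesis $H_0=0$ enters crucially via $\mu\equiv 0$ and $\ad_h u=\gamma(h)v$, $\ad_h v=-\gamma(h)u$, which together with $\ad_\xi u=kv$, $\ad_\xi v=-ku$ yield $[u,\,{-\gamma(h)/k\,\xi+h}]=0$ and $[v,\,{-\gamma(h)/k\,\xi+h}]=0$.

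Next I would pick a Levi subalgebra $\l\subset\h_1$ and set
\[
S := \R\xi\oplus\l\oplus\q.
\]
Since $\theta(\l,\l)\subset\l$, $\mu|_\l=0$, and $[u,v]=2\xi+H_0=2\xi$, the brackets keep $S$ closed. Moreover $\R\xi\oplus\q$ is a $3$-dimensional simple ideal in $S$ (isomorphic to $\su(2)$ or $\slie(2,\R)$ depending on the sign of $k$, from $[\xi,u]=kv$, $[\xi,v]=-ku$, $[u,v]=2\xi$), with $S/(\R\xi\oplus\q)\cong\l$ semisimple; an extension of a semisimple algebra by a semisimple ideal is semisimple, hence so is $S$.

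Finally, $\mathfrak{r}_0\cap S=0$ because projection to $\h_1$ lands in $\rad(\h_1)\cap\l=0$, and a dimension count $\dim\mathfrak{r}_0+\dim S=\dim\rad(\h_1)+\dim\l+3=\dim\h_1+3=\dim\g$ gives $\g=\mathfrak{r}_0\oplus S$ as vector spaces. This is a Levi decomposition of $\g$, so $\rad(\g)=\mathfrak{r}_0$, which is exactly the claimed description. The main obstacle I anticipate is the bookkeeping in Step~1---in particular showing that the coefficient adjustment $-\gamma(h)/k$ is precisely what makes $\mathfrak{r}_0$ closed under brackets and stable under $\ad_u,\ad_v$---but once the identity $2k\,\omega=\gamma\circ\theta$ is isolated, everything collapses to short computations, and Steps~2 and~3 are purely structural.
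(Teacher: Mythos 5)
Your proposal is correct, but it proves the lemma by a genuinely different route than the paper. The paper's proof is computational: it uses the characterisation $\rad(\g)=[\g,\g]^{\perp_\beta}$ with respect to the Cartan--Killing form, writes out the adjoint matrices of $\xi$, $h\in\h_1$, $u$, $v$ explicitly, computes the relevant values of $\beta$, and solves the resulting linear conditions on a generic element $a\xi+bu+cv+h$. You instead verify directly that $\mathfrak{r}_0=\{-\gamma(h)/k\,\xi+h : h\in\rad(\h_1)\}$ is a solvable ideal (the key identity $2k\,\omega(x,y)=\gamma(\theta(x,y))$, which is exactly Proposition \ref{prop: compatibilidad}(1) unwound, together with \eqref{gamma en H0}-free computations $[u,z]=[v,z]=0$ that use $\mu\equiv 0$), and then exhibit the semisimple complement $S=\R\xi\oplus\l\oplus\q$, concluding from the standard fact that a solvable ideal with semisimple quotient is the radical. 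All the individual verifications check out: $\mathfrak{r}_0\cong\rad(\h_1)$ as Lie algebras, $S$ is a subalgebra, $\R\xi\oplus\q$ is a simple ideal of $S$ with semisimple quotient $\l$, and the dimension count closes the argument. Your approach is arguably more structural and buys something extra: the computation $[\xi,\mathfrak{r}_0]=[\q,\mathfrak{r}_0]=0$ that you carry out in Step 1 is precisely the content needed for the theorem immediately following the lemma in the paper (that the Levi decomposition is a \emph{direct} product when $\h_1$ is solvable), so your proof absorbs part of that later argument. What the paper's Killing-form computation buys in exchange is the explicit matrices of $\ad_u$, $\ad_v$, $\ad_h$, which are reused elsewhere; and it identifies $\rad(\g)$ without having to produce a Levi subalgebra of $\h_1$ by hand.
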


\begin{proof}
Let us recall that 
\begin{equation*}\label{radical formula}
       \rad(\g)=[\g,\g]^{\perp _{\beta}}=\{z \in \g \mid \beta(z, y)=0 \, \text{ for all}\; y \in [\g,\g]\},
\end{equation*}
where $\beta$ is the Cartan-Killing form of $\g$.  
 Having $H_0=0$ implies $\mu=0$ and $[u,v]=2\xi$, so by Proposition \ref{h1 crea sasaki sin centro} 
\begin{equation}\label{conmutador de no soluble}
    [\g,\g]=\R\xi \oplus [\h_1, \h_1] \oplus {\q}. 
\end{equation}
Let us, for clarity, write the matrices used to compute $\beta$ in the orthonormal frame $(\xi, e_1,\ldots, e_{2n},u,v)$ of $\g$, where $(e_i)$ is an orthonormal basis of $\h_1$. 
The matrix of $\ad_{\xi}$ is all zero except for the diagonal ${\q}$-block where 
\eqref{eq:ad_xi w} holds.
For any $h\in\h_1$, 
\[
\ad_{h}=
\left(\begin{array}{c|ccc|cc}
          0  &\cdots & 2\langle Jh,e_i\rangle & \cdots &0&0 \\
          \hline 
          0&&& &0&0\\
          \vdots&&\ad^{\h_1}_h&&\vdots&\vdots\\
          0&&&&0&0\\
          \hline
       0 & \cdots & 0 & \cdots &0 & -\gamma(h) \\
        0 & \cdots & 0 & \cdots &\gamma(h) & 0
        \end{array}\right).
\]
Moreover,
\[
\ad_{u}=
\left(\begin{array}{c|ccc|cc}
          0  &\cdots & 0 & \cdots&0&2 \\
          \hline 
          0&&& &0&0\\
          \vdots&&0&&\vdots&\vdots\\
          0&&&&0&0\\
          \hline
       0 & \cdots & 0 & \cdots&0 & 0 \\
        -k &  \cdots & -\gamma(e_i) &  \cdots
        &0 & 0
        \end{array}\right), \quad 
\ad_{v}=
\left(\begin{array}{c|ccc|cc}
          0  &\cdots & 0 & \cdots&-2&0 \\
          \hline 
          0&&& &0&0\\
          \vdots&&0&&\vdots&\vdots\\
          0&&&&0&0\\
          \hline
       k &  \cdots & \gamma(e_i) &  \cdots
       &0 & 0 \\
        0 & \cdots & 0 & \cdots&0 & 0
        \end{array}\right).
\]
This gives the values of $\beta$  (for $h,\tilde{h}\in\h_1$):
$$\begin{array}{ll}
\beta(\xi,\xi)=-2k^2, & \; \beta(u,u)=-4k, \qquad \beta(v, v)=-4k\\
\beta(\xi, h)=-2k\gamma(h), & \;
\beta(h,\widetilde{h})=\beta^{\h_1}(h,\widetilde{h})-2\gamma(h)\gamma(\widetilde{h}).
\end{array}$$

Pick an element $z=a\xi+ bu +cv+h \in \rad(\g)$, where $a,b,c\in\R$, $h\in\h_1$. Using decomposition \eqref{conmutador de no soluble} we obtain
\begin{equation*}\begin{split}
    0=\beta(z,\xi)&=-2ak^{2}-2k\gamma(h) \then \gamma(h)=-ak \\
    0=\beta(z,u)&=-4b k \then b=0\\
    0=\beta(z, v)&=-4ck \then c=0.
\end{split}\end{equation*}
Consequently for $h' \in [\h_1, \h_1]$
$$0=\beta(z, h')=-2ak\gamma(h')+\beta^{\h_1}(h, h') -2\gamma(h)\gamma(h') = \beta^{\h_1}(h, h'),$$
which says $h \in \rad(\h_1)$.
\end{proof}

\begin{thm}
Let $\g$ be a Sasakian Lie algebra with trivial centre and $\dim \Im \ad_{\xi}=2$ such that $\h_1$ is solvable and $H_{0}=0$ (thus $\g$ is not solvable). Then the Levi decomposition 
$$\g\cong (\R\xi\oplus {\q}) \times \rad(\g)$$ 
is a direct product. Moreover, $\rad(\g)$ is 
isomorphic to $\h_1$ and the semisimple part is  isomorphic to 
either to $\su(2)$ or $\mathfrak{sl}(2,\R)$.
\end{thm}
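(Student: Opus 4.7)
The plan is to pin down the two pieces of the Levi decomposition explicitly and then show they commute. The radical has already been described in Lemma \ref{lemma: radical}. Since $\h_1$ is solvable, $\rad(\h_1)=\h_1$, so
\[ \rad(\g)=\bigl\{\phi(h):=-\tfrac{\gamma(h)}{k}\,\xi+h \,\bigm|\, h\in\h_1\bigr\}. \]
The natural candidate for the semisimple Levi factor is $\mathfrak{m}:=\R\xi\oplus\q$, which is a subalgebra because $H_0=0$ forces $[u,v]=2\xi\in\mathfrak{m}$ and $\ad_\xi$ preserves $\q$.

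First I would check that the map $\phi\colon\h_1\to\rad(\g)$ is a Lie algebra isomorphism. It is clearly a linear bijection onto its image; since $\xi$ is central in $\s=\R\xi\oplus\h_1$, a direct expansion gives
\[ [\phi(h),\phi(h')] = [h,h']_\g = -2\omega(h,h')\,\xi + \theta(h,h'), \]
while $\phi(\theta(h,h')) = -\tfrac{1}{k}\gamma(\theta(h,h'))\,\xi + \theta(h,h')$. Equality of the $\xi$-components amounts to $\gamma\circ\theta=2k\omega$, which is precisely the K\"ahler-exactness condition $\omega=-\tfrac{1}{2k}\,\D^{\h_1}\gamma$ restated. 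Hence $\phi$ is a Lie algebra homomorphism and therefore $\rad(\g)\iso\h_1$.

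Next I would identify $\mathfrak{m}$. Using $[u,v]=2\xi$ and \eqref{eq:ad_xi w} the structure constants are
\[ [\xi,u]=kv,\qquad [\xi,v]=-ku,\qquad [u,v]=2\xi. \]
The Killing form of $\mathfrak{m}$, computed in the basis $(\xi,u,v)$, comes out diagonal with entries $(-2k^2,-4k,-4k)$, hence non-degenerate, so $\mathfrak{m}$ is semisimple of dimension $3$. When $k>0$ the form is negative definite, giving the compact real form $\su(2)$; when $k<0$ it has signature $(-,+,+)$, giving $\slie(2,\R)$.

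The crux is to show $[\mathfrak{m},\rad(\g)]=0$, so that the Levi decomposition is an honest direct product. For $h\in\h_1$ the identity $[\xi,\phi(h)]=[\xi,h]=0$ is immediate since $\h_1\subset\Ker\ad_\xi$. For the computation with $u$,
\[ [u,\phi(h)] = -\tfrac{\gamma(h)}{k}[u,\xi] + [u,h] = \gamma(h)\,v - \bigl(\mu(h)u+\gamma(h)v\bigr), \]
using \eqref{eq:ad_x w}; the hypothesis $H_0=0$ makes $\mu\equiv 0$, so both terms cancel and $[u,\phi(h)]=0$. The case of $v$ is analogous. This is the step I expect to be most delicate: the vanishing hinges on the precise twist $-\gamma(h)/k$ added to $h$, which was exactly engineered in the definition of $\rad(\g)$ to kill the $\q$-component of $[u,h]$, and this in turn works only because $\mu=0$, i.e.\ $H_0=0$.

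With $[\mathfrak{m},\rad(\g)]=0$ and $\g=\mathfrak{m}\oplus\rad(\g)$ as vector spaces, the Levi decomposition becomes a direct product $\g\cong\mathfrak{m}\times\rad(\g)$, concluding the proof.
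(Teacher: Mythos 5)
Your proposal is correct and follows essentially the same route as the paper: identify $\rad(\g)$ via Lemma \ref{lemma: radical} using $\rad(\h_1)=\h_1$, exhibit $\R\xi\oplus\q$ as a three-dimensional subalgebra isomorphic to $\su(2)$ or $\slie(2,\R)$ according to the sign of $k$, and show the two factors commute by direct bracket computations (where, as you note, the cancellation hinges on $\mu\equiv 0$). Your explicit verification that $h\mapsto -\tfrac{\gamma(h)}{k}\xi+h$ is a Lie algebra homomorphism via the identity $\gamma\circ\theta=2k\omega$ is a welcome sharpening of the step the paper dispatches with ``patently isomorphic''.
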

\begin{proof}
Since $\h_1$ is solvable it equals its radical $\rad(\h_1)$, and it follows from Lemma \ref{lemma: radical} that 
\[\rad(\g)= \left\{-\frac{\gamma(h)}{k} \xi+ h\mid h\in\h_1\right\}. \]
Let us denote $\mathfrak b:=\text{span}\{\xi, u, v\}=\R\xi\oplus {\q}$. 
By virtue of \eqref{eq:ad_xi w}, it clearly is a subalgebra 
isomorphic to $\su(2)$ or $\mathfrak{sl}(2,\R)$, depending on the sign of $k$. \par
\quad In order to show that the product is direct rather than semi-direct, we will prove that $[\mathfrak{b}, \rad(\g)]$ is zero. Consider a radical element $a\xi+h$ against the generic element in $\mathfrak{b}$. 
First, $\ad_\xi(\h_1)=0$ implies $[\xi, \rad(\g)]=0$. 
Similarly, $[{\q}, \rad(\g)]=0$ by bracketing with $u,v$. \par
\quad To finish the proof pick a unit vector $e_1\in \g$ with dual $ e^1= \frac 1c \gamma$ for some real number $c>0$. Extending $e_1$ to an orthonormal basis $\{e_1, e_2, \ldots,  e_{2n}\}$ of $\h_1$ we find
\[\rad(\g)=\text{span}\left\{-\frac{c}{k}\xi+e_1, e_2, \ldots,  e_{2n}\right \},\]
which is patently isomorphic to $(\h_1,\theta)$ since $\h_1\subseteq \Ker\ad_\xi$.
\end{proof}

\begin{prop}\label{g unimodular}
     $\mathfrak{g}$ is unimodular  if and only if
$\tr \ad ^{\h_1} _{x}=-2\mu(x)$, for every  $x \in \h_1$.
\end{prop}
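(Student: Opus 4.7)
The plan is to compute $\tr \ad_z$ explicitly for $z$ ranging over the adapted spanning set $\{\xi\}\cup (e_i)_{i=1}^{2n}\cup\{u,v\}$, where $(e_i)$ is a basis of $\h_1$, and recall that unimodularity of $\g$ amounts to the vanishing of all these traces. The necessary bracket data are already on the table: formulas \eqref{eq: h1-bracket}, \eqref{eq:ad_xi w}, \eqref{eq:ad_x w}, \eqref{[u,v]} and Proposition \ref{prop: compatibilidad}.

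First I would handle the three auxiliary generators. The matrix of $\ad_\xi$ with respect to the chosen basis vanishes on $\R\xi\oplus \h_1$ (since $\h_1\subseteq \Ker\ad_\xi$) and coincides with the skew block \eqref{eq:ad_xi w} on $\q$; its trace is $0$. For $\ad_u$, one checks $\ad_u\xi=-kv$, $\ad_ux=-\mu(x)u-\gamma(x)v\in\q$ for $x\in\h_1$, and $\ad_uv=2\xi+H_0\in\R\xi\oplus\h_1$; none of these contribute to the diagonal, so $\tr \ad_u=0$. The symmetric argument gives $\tr\ad_v=0$.

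Next, for $x\in\h_1$ the map $\ad_x$ respects the splitting $\R\xi\oplus\h_1\oplus\q$ up to off-diagonal shifts that do not affect the trace: $\ad_x\xi=0$ (null contribution), on $\h_1$ it acts as $y\mapsto -2\omega(x,y)\xi+\theta(x,y)$, whose diagonal part is exactly that of $\ad^{\h_1}_x$, and on $\q$ it is the block \eqref{eq:ad_x w} whose diagonal is $2\mu(x)$. Hence
\[
\tr\ad_x=\tr\ad^{\h_1}_x+2\mu(x).
\]

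Putting the pieces together, the only non-automatic trace condition is the vanishing of $\tr\ad_x$ for $x\in\h_1$, and this is precisely $\tr\ad^{\h_1}_x=-2\mu(x)$. There is no real obstacle here beyond careful bookkeeping of the $\xi$, $\h_1$, $\q$ components in each adjoint matrix, together with the observation that $H_0\in\h_1\subseteq(\R u\oplus \R v)^\perp$ guarantees $[u,v]$ carries no diagonal contribution in $\q$.
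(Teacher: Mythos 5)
Your proposal is correct and follows essentially the same route as the paper: both compute the traces of $\ad_z$ for $z$ in the adapted decomposition $\R\xi\oplus\h_1\oplus\q$, observe that $\ad_\xi$, $\ad_u$, $\ad_v$ are automatically traceless, and reduce unimodularity to $\tr\ad_x=\tr\ad^{\h_1}_x+2\mu(x)=0$ for $x\in\h_1$. The only cosmetic difference is that the paper packages $\R\xi\oplus\h_1$ as the subalgebra $\s$ and computes $\tr\ad^{\s}_x=\tr\ad^{\h_1}_x$ via inner products, whereas you track the components directly.
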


\begin{proof}
If  $x\in \h_1$ it follows that 
\[
\ad_{x}=
\left(\begin{array}{c|c}
          \ad^{\s}_x  & \\          \hline 
        & \ad_{x}\restr_{\q}
        \end{array}\right)
\]
and 
\begin{equation*}\begin{split}
    \tr \ad^{\s} _{x}= \langle [x,\xi],\xi \rangle+ {\textstyle\sum}_i \langle [x, e_i],e_i \rangle
   & ={\textstyle\sum}_i \langle \theta(x,e_i) -2 \omega(x,e_i)\xi , e_i \rangle \\
   & ={\textstyle\sum}_i \langle \theta(x,e_i),e_i \rangle
    =\tr\ad^{\h_1} _{x}.
\end{split}\end{equation*}
Hence  $\tr \ad_{x}=\tr \ad^{\h_1} _{x}+2\mu(x)$ for all $x \in \h_1$, from \eqref{eq:ad_xi w}. On the other hand, the full adjoint map of  $x \in {\q}$ only has off-diagonal blocks, whence it is trace-free.
Moreover, $\ad_{\xi}$ is traceless because of \eqref{eq:ad_xi w} and $[\xi , \s]=0$. All that reduces the unimodularity to the simpler constraint 
$\tr \ad^{\h_1} _{x}=-2\mu(x)$, for $x \in \h_1$.
\end{proof}

\smallskip

Under the previous assumptions, 
\begin{coro}
$\g$ cannot be unimodular in case $H_0=0$.  
\end{coro}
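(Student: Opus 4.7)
The plan is to trace the unimodularity hypothesis down to $\h_1$ and then use that $\omega$ is exact on $\h_1$ to reach a contradiction.

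First, I would apply Proposition \ref{g unimodular}: if $\g$ were unimodular, then $\tr \ad^{\h_1}_x = -2\mu(x)$ for every $x \in \h_1$. Under the hypothesis $H_0 = 0$, the formula $\mu(x) = -\tfrac{1}{2}\langle x, JH_0\rangle$ from Proposition \ref{prop: compatibilidad} forces $\mu \equiv 0$. Hence the above condition becomes $\tr \ad^{\h_1}_x = 0$ for all $x \in \h_1$, i.e.\ $\h_1$ itself is unimodular.

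Next I would exploit the K\"ahler-exact structure on $\h_1$. By Proposition \ref{prop: compatibilidad} the symplectic form is globally exact on $\h_1$, namely $\omega = -\tfrac{1}{2k}\D^{\h_1}\gamma$. Writing $\dim \h_1 = 2m$, the $m$-th power $\omega^m$ is a non-zero top-degree form on $\h_1$ (it is a multiple of the Riemannian volume). Using $\D^{\h_1}\omega = 0$ and the Leibniz rule,
\[
\omega^m = \left(-\tfrac{1}{2k}\right)\D^{\h_1}\gamma \wedge \omega^{m-1} = \left(-\tfrac{1}{2k}\right)\D^{\h_1}\!\left(\gamma \wedge \omega^{m-1}\right),
\]
so $\omega^m$ is an exact form of top degree.

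To close the argument I would recall the standard fact that a Lie algebra $\mathfrak{a}$ of dimension $n$ is unimodular if and only if the Chevalley--Eilenberg differential $\D \colon \bigwedge^{n-1}\mathfrak{a}^{*} \to \bigwedge^{n}\mathfrak{a}^{*}$ is identically zero. Applied to $\h_1$, unimodularity of $\h_1$ therefore forces every exact top form to vanish, contradicting the fact that $\omega^m$ is both exact and non-zero. Hence $\g$ cannot be unimodular when $H_0 = 0$.

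The only subtle step is the last one; rather than proving the unimodularity\,/\,top-differential equivalence from scratch, I would simply cite it (it is the classical observation that, on a unimodular Lie algebra, the Chevalley--Eilenberg complex has non-trivial top cohomology, equivalently that the formula $\D(\alpha \wedge e^{1}\wedge\cdots\wedge \widehat{e^{i}}\wedge\cdots\wedge e^{n})$ reduces to $\pm \tr \ad_{e_i}$ times the volume). Once this is invoked, the incompatibility of unimodularity with an exact symplectic form is immediate, and it is really this classical obstruction (due to Chu) that is doing all the work.
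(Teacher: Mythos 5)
Your proof is correct, and the reduction is exactly the paper's: Proposition \ref{g unimodular} together with $\mu\equiv 0$ (forced by $H_0=0$) converts unimodularity of $\g$ into $\tr\ad^{\h_1}_x=0$ for all $x$, i.e.\ unimodularity of $\h_1$, which must then be excluded because $\h_1$ is K\"ahler-exact. The only divergence is in how that exclusion is justified: the paper simply cites the result of \cite{frob} that K\"ahler-exact algebras are never unimodular, whereas you reprove it from scratch by observing that $\omega^m=-\tfrac{1}{2k}\,\D^{\h_1}\!\bigl(\gamma\wedge\omega^{m-1}\bigr)$ is a non-zero exact top-degree form, while on a unimodular Lie algebra the Chevalley--Eilenberg differential vanishes on forms of degree $\dim\h_1-1$. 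That classical equivalence (unimodularity $\iff$ $\D=0$ in top degree minus one, equivalently non-trivial top cohomology) is standard and safe to invoke, and your computation of the exactness of $\omega^m$ is correct. Your version is self-contained and slightly more general in that it uses only the exact symplectic form, not the metric or $J$; the paper's version outsources precisely this step to the literature. Either way the corollary follows.
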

\begin{proof} 
Choosing $H_0$ to be zero means $\mu(x)=0$, so if $\g$ were unimodular the previous proposition would make $\ad^{\h_1}$ traceless on $\h_1$. That, though,  is not possible since K{\"a}hler-exact algebras are not unimodular by a result of \cite{frob}. 
\end{proof}

\bigbreak 

\subsection{Ricci tensors}\label{sec: ricci} 
\medbreak

We begin by computing the Ricci tensor of $\g$ by brute force. 
Proposition \ref{nabla xi} furnishes the following summary:
\begin{longtable}[c]{c | c c c c} 
{} & $\xi$ & $y$ & $u$ & $v$\\ \midrule
$\nabla_{\xi}$ & $0$ & $-Jy$ & $(k-1)v$ & $(1-k)u$\\[1mm] 
$\nabla_{x}$ & $-Jx$ & $-\omega(x,y)\xi +\nabla_{x}^{\h_1}y$ & $(\mu(Jx)+\gamma(x))v$ & $-(\gamma(x)+ \mu(Jx) )u$ \\[1mm] 
$\nabla_{v}$ & $u$ & $-\mu(Jy)u-\mu(y)v$ & $-\xi-\tfrac{1}{2}H_0$ & 
$-\tfrac{1}{2}JH_0$\\[1mm] 
$\nabla_{u}$ & $-v$ & $-\mu(y)u+\mu(Jy)v$ & $-\tfrac{1}{2}JH_0$ & 
$\xi + \tfrac{1}{2}H_0$\\ \midrule
\end{longtable}
with $x,y \in \h_1$. 
Then 
\begin{equation*}
    \ric (x,y)=\ric^{\h_1}(x,y)-2\langle x,y\rangle+
    \langle \nabla_{y}^{\h_1}x, JH_0 \rangle +\frac{1}{2}\langle H_0,x\rangle \langle H_0,y\rangle  -\frac{1}{2} \langle JH_0,x \rangle \langle JH_0 , y \rangle.
\end{equation*}
Observe that this is indeed symmetric as $JH_0$ is orthogonal to $\h_1^{(1)}$, so $ \langle JH_0, \nabla^{\h_1}_{y}x\rangle = \langle JH_0 , \nabla ^{\h_1}_{x}y\rangle$. Moreover, using the Koszul formula for the Levi-Civita connection and Proposition \ref{prop: compatibilidad}(2) we may simplify the long formula above to the more significant expression
\begin{equation} \label{Ric en h1}
    \ric (x,y)=\ric^{\h_1}(x,y)-2\langle x,y\rangle+ \frac{1}{2}\langle \theta (Jx, y),H_0 \rangle. 
\end{equation} 
Next, on ${\q}$:
\[ \ric (u,v)= \frac{1}{2} {\textstyle\sum}_i \big(\langle \nabla ^{\h_1}_{e_i}H_0, e_i \rangle + \langle H_0 , JH_0 \rangle\big)= 
\frac{1}{2} {\textstyle\sum}_i \langle \nabla ^{\h_1}_{e_i}H_0, e_i \rangle, \]
and \eqref{eq: importante} tells 
$$
   {\textstyle\sum}_i \langle \nabla ^{\h_1}_{e_i}H_0,e_i \rangle ={\textstyle\sum}_i \langle \theta(e_i,H_0), e_i \rangle 
   =-{\textstyle\sum}_i\langle e_i , JH_0 \rangle \langle H_0 , e_i \rangle 
   =-\langle JH_0,H_0\rangle =0.
$$
Similarly, we have 
\begin{equation}\label{Ric en u,u} 
    \ric (u, u) = -\| H_0 \|^{2} +\gamma(H_0) +2(k-1) - \frac{1}{2} {\textstyle\sum}_i \langle\nabla ^{\h_1}_{e_i} JH_0 , e_i \rangle.
\end{equation} 
The right-hand side provides an expression for $\ric (v, v)$, too. 
The other relevant Ricci terms are easy: $ \ric (x, u)=\ric (x, v)=0$. 
\smallbreak
With these preparations in place, we can proceed to refine Proposition \ref{prop: einstein con centro}.

\begin{prop}\label{H=0}
If $H_0=0$ then 
$$\g \text{ is $\eta$-Einstein } \iff \h_1 \text{ is K{\"a}hler-Einstein}.$$
\end{prop}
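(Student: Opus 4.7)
The plan is to substitute $H_0 = 0$ into the three Ricci identities collected in \S\ref{sec: ricci} and test the $\eta$-Einstein condition $\ric(X,Y) = \lambda \langle X,Y\rangle$ on the Reeb-transversal hyperplane $\h = \h_1 \oplus \q$ block by block. The bulk of the computational work has already been done in \S\ref{sec: ricci}, so what remains is essentially bookkeeping combined with an arithmetic consistency check.

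First I would observe that, when $H_0 = 0$, formula \eqref{Ric en h1} collapses to $\ric(x,y) = \ric^{\h_1}(x,y) - 2\langle x,y\rangle$ for $x, y \in \h_1$, which is exactly the identity of Lemma \ref{Ric g y Ric h} from the ``with centre'' case. Consequently the same algebraic manipulation used in the proof of Proposition \ref{prop: einstein con centro} reduces the $\h_1$-block of the $\eta$-Einstein equation to the K\"ahler-Einstein condition $\ric^{\h_1}(x,y) = (\lambda + 2)\langle x,y\rangle$ on $\h_1$.

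Second, the off-diagonal Ricci entries $\ric(x,u)$, $\ric(x,v)$ and $\ric(u,v)$ vanish unconditionally in \S\ref{sec: ricci} and so impose no obstruction to the $\eta$-Einstein equation. On the $\q$-block, formula \eqref{Ric en u,u} becomes $\ric(u,u) = \ric(v,v) = 2(k-1)$ since every term involving $H_0$ disappears (including $\|H_0\|^2$, $\gamma(H_0)$ and the covariant derivative of $JH_0$); the $\eta$-Einstein equation applied to $(u,u)$ therefore forces $\lambda = 2(k-1)$. Combined with the $\h_1$-block this pins down the K\"ahler-Einstein constant of $\h_1$ to be $\lambda + 2 = 2k$ — a numerical compatibility between the two blocks rather than a fresh constraint.

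Both directions then drop out: an $\eta$-Einstein $\g$ forces $\h_1$ to be K\"ahler-Einstein (with constant $2k$) by reading off the $\h_1$-block, and conversely K\"ahler-Einsteinness of $\h_1$ (necessarily with the matched constant, given the fixed parameter $k$ of $\g$) makes the three blocks agree on the single value $\lambda = 2k - 2$, so that $\ric = \lambda\langle\cdot,\cdot\rangle$ throughout $\h$. No genuine obstacle is expected: the only conceptual point distinguishing this argument from Proposition \ref{prop: einstein con centro} is precisely that the $H_0$-dependent correction terms in \eqref{Ric en h1} and \eqref{Ric en u,u} vanish, so the centreless setup behaves in this regard exactly like the central one.
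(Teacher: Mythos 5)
Your argument is correct and coincides with the paper's own proof: both substitute $H_0=0$ into the Ricci formulas of \S\ref{sec: ricci}, obtain the block-diagonal form $\ric=(\ric^{\h_1}-2\Id)\oplus 2(k-1)\Id$ on $\h=\h_1\oplus\q$ with vanishing off-diagonal entries, and read off that the $\eta$-Einstein condition is equivalent to $\ric^{\h_1}=2k\,\langle\cdot,\cdot\rangle$. The one caveat --- shared with the paper's phrasing of the statement --- is that your parenthetical ``necessarily with the matched constant'' is a hypothesis rather than a consequence: the Einstein constant of a K\"ahler--Einstein $\h_1$ is not forced to equal $2k$, since $k$ enters only through the normalisation $\gamma=2kf$ of a primitive of $\omega$ and can be varied independently of the metric, so the reverse implication genuinely requires the constant to be $2k$.
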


\begin{proof}
Let us enforce the above computations under the new assumption, to find
\begin{equation*}\begin{array}{c}
\ric (x,y)=\ric^{\h_1}(x,y)- 2 \langle x, y \rangle, \quad 
\ric (u,u)=\ric (v,v)=2(k-1), \\[2mm]
\ric (u,v)=\ric (x,u)=\ric (x,v)=0,
\end{array}\end{equation*}
which we write more effectively as matrix on $\h=\h_1\oplus{\q}$:
\begin{equation*}
\ric =
\left( \begin{array}{c|c}
     \ric^{\h_1} - 2\Id  & \\ \hline
     & 2(k-1)\Id 
\end{array} \right).
\end{equation*}
Immediately, $\g$ is $\eta$-Einstein precisely when $\ric^{\h_1}(x,y)=2k\langle x,y\rangle$. 
\end{proof}

\bigbreak 

There remains to see to the generic, and more difficult, case in which $H_0 \neq 0$. Before going head-on into that matter, let us observe the following.

\begin{coro}
If $\g$ is $\eta$-Einstein with $H_{0}\neq 0$, the Einstein constant equals
\begin{equation} \label{lambda para eistein}
    \lambda=\frac{1}{2}\tr\ad^{\h_1}_{JH_0} -\| H_0 \|^{2}-2.
\end{equation}
 \end{coro}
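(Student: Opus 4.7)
The most efficient route is to read $\lambda$ off the Ricci value $\ric(u,u)$, since $u\in\q\subset\h$ is Reeb-transversal and of unit norm, so the $\eta$-Einstein condition immediately yields $\ric(u,u)=\lambda$. The formula \eqref{Ric en u,u} already computed in \S\ref{sec: ricci} gives
$$\lambda = \ric(u,u) = -\|H_0\|^2 + \gamma(H_0) + 2(k-1) - \frac{1}{2}\sum_i \langle \nabla^{\h_1}_{e_i}JH_0,\, e_i\rangle,$$
and since $H_0\neq 0$ we may invoke \eqref{gamma en H0} to substitute $\gamma(H_0)=-2k$, which collapses the two $k$-terms and leaves $-\|H_0\|^2-2-\tfrac12\sum_i\langle\nabla^{\h_1}_{e_i}JH_0,e_i\rangle$.

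The only thing left is to recognise the divergence-like sum as (minus) a trace of an adjoint map. Applying the Koszul formula in $(\h_1,\theta,\pint)$ to $X=e_i$, $Y=JH_0$, $Z=e_i$ gives
$$2\langle \nabla^{\h_1}_{e_i}JH_0,\, e_i\rangle \;=\; \langle \theta(e_i,JH_0), e_i\rangle \;-\; \langle \theta(JH_0,e_i), e_i\rangle \;+\; \langle \theta(e_i,e_i), JH_0\rangle,$$
whose last term is zero and whose first two coincide by skew-symmetry of $\theta$. Hence each summand equals $\langle \theta(e_i,JH_0),e_i\rangle = -\langle \ad^{\h_1}_{JH_0}(e_i), e_i\rangle$, so summing yields
$$\sum_i \langle \nabla^{\h_1}_{e_i}JH_0,\, e_i\rangle \;=\; -\tr \ad^{\h_1}_{JH_0}.$$
Plugging back produces precisely \eqref{lambda para eistein}.

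No serious obstacle is expected; the only subtle point is that the calculation requires $H_0\neq 0$ in order to invoke \eqref{gamma en H0}, which is why the statement carries that hypothesis. As a sanity check one could alternatively evaluate $\ric(v,v)$ to recover the same constant (the two expressions agree term by term, as the Ricci formula on $\q$ is symmetric in $u,v$), and cross-check with $\ric(x,y)=\lambda\langle x,y\rangle$ on $\h_1$ using \eqref{Ric en h1}, though this latter route requires more manipulation and does not give the trace expression directly.
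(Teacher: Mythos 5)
Your proposal is correct and follows the same overall strategy as the paper: read $\lambda$ off $\ric(u,u)$ via \eqref{Ric en u,u} (legitimate, since $u$ is a unit Reeb-transversal vector, so the $\eta$-Einstein condition gives $\ric(u,u)=\lambda$ directly), substitute $\gamma(H_0)=-2k$ from \eqref{gamma en H0}, and reduce the remaining sum to $-\tr\ad^{\h_1}_{JH_0}$. The one place where you genuinely diverge is in establishing that last identity, namely \eqref{ad y traza}. The paper first commutes $J$ past $\nabla^{\h_1}$, expands $\nabla^{\h_1}_{e_i}H_0$ with Koszul, invokes the compatibility condition \eqref{eq: importante}, and arrives at $(\D^*\omega)(H_0)-\tr\ad^{\h_1}_{JH_0}$, finally using $\D^*\omega=0$ from the K{\"a}hler condition. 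You instead apply Koszul directly to $\langle\nabla^{\h_1}_{e_i}JH_0,e_i\rangle$ and observe that $\sum_i\langle\nabla^{\h_1}_{e_i}Y,e_i\rangle=-\tr\ad^{\h_1}_{Y}$ holds for any left-invariant $Y$ on any metric Lie algebra -- no K{\"a}hler structure, no use of \eqref{eq: importante}. Your route is shorter and more transparent, and your Koszul bookkeeping checks out; what the paper's longer route buys is only the incidental appearance of the codifferential $\D^*\omega$, which plays no further role. Your closing sanity checks (symmetry in $u,v$, cross-checking against \eqref{Ric en h1}) are consistent with the paper's computations.
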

\begin{proof}
We pick up from \eqref{Ric en u,u} and push the computation further:
\begin{equation*}\begin{split}
    \langle \nabla^{\h_1}_{e_i}JH_0, e_i \rangle &=\langle J \nabla ^{\h_1}_{e_i}H_0, e_i \rangle =- \langle \nabla^{\h_1}_{e_i}H_0, Je_i \rangle\\
    &=-\frac{1}{2} \big( \langle \theta (e_i,H_0) , Je_i \rangle + \theta (Je_i, H_0) ,e_i \rangle + \langle \theta(Je_i, e_i) , H \rangle \big)\\
    &=-\frac{1}{2} \big( -\langle e_i , JH_0 \rangle \langle H_0 , Je_i \rangle - \langle Je_i, JH_0 \rangle \langle H_0 , e_i \rangle + \langle \theta(Je_i, e_i) , H_0 \rangle\big) \\
    &=-\frac{1}{2} \big( \langle JH_0, e_i \rangle ^{2}- \langle H_0, e_i \rangle ^{2} + \langle \theta(Je_i, e_i) , H_0 \rangle \big),
\end{split}\end{equation*}
and adding up gives us 
\begin{equation*}\begin{split}
    {\textstyle\sum}_i \langle \nabla^{\h_1}_{e_i}JH_0, e_i \rangle &=
    -\frac{1}{2}\big(\|JH_0\|^2-\|H_0\|^2 + {\textstyle\sum}_i\langle \theta(Je_i, e_i) , H_0 \rangle \big)\\
      &=-\frac{1}{2} {\textstyle\sum}_i  \langle \theta(Je_i, e_i) , H_0 \rangle  
= (\D^{*} \omega)(H_0) - \tr \ad^{\h_1}_{JH_0}
\end{split}\end{equation*}
in terms of the codifferential on $\h_1$ (see for instance \cite[Section 2]{ab}).
Of course $\D^\ast \omega=0$ since $\h_1$ is K{\"a}hler, and so:
\begin{equation} \label{ad y traza}
    {\textstyle\sum}_i  \langle \nabla^{\h_1}_{e_i}JH_0, e_i \rangle=- \tr \ad^{\h_1} _{JH_0}.
\end{equation}

Substituting what we have just found into \eqref{Ric en u,u} and using \eqref{gamma en H0} yields
\begin{equation}
\ric (u,u)=\ric (v, v)=-\| H_0 \|^{2}-2+\frac{1}{2}\tr\ad^{\h_1}_{JH_0}
\end{equation}
and proves the statement.
\end{proof}

The punchline at the end of this section is that in either case ($H_{0}=0$ and $H_{0} \neq 0$) we need to study K{\"a}hler-Einstein Lie algebras with exact K{\"a}hler form. We shall do just that in the subsequent sections, under the additional simplifying assumption that the Sasakian Lie algebra under consideration  (hence the K{\"a}hler one as well) is solvable. This leads to studying \textit{normal $j$-algebras}, which we shall pursue next.

\bigbreak 


\section{Normal \textnormal{j}-algebras}\label{sec:normal-j-algebras}

\begin{definition} \label{def= normal j-algebra}
Let  $\d$ be a completely solvable real Lie algebra, $J\in\End \d$ an integrable complex structure and $f\in\d^*$ a $1$-form. 
The triple $(\d, J, f)$ is called a {\bf normal $\boldsymbol j$-algebra} if 
$$f [Jx,Jy] =f [x,y] \quad \text{ and }\quad f [Jx, x] >0$$ for all  $x, y\in\d$.
\end{definition}
   
Such a gadget admits a $J$-invariant, positive-definite inner product
\[ \langle x ,y \rangle := f [Jx, y]. \]
The fundamental form associated with the Hermitian pair  $\big(J, \pint\big)$   
is therefore $\omega = -\D f$. 
In other words, a normal $j$-algebra is a  completely solvable, K{\"a}hler-exact  Lie algebra. The following structure theorem is part of Piatetskii-Shapiro's seminal work.
\begin{thm}[\cite{shapiro}]\label{prop raices}
Let  $(\d, J, f)$ be a normal $j$-algebra, and write 
$$\d =\n \oplus \a$$ 
where $\n=[\d, \d]$ and $\a$ is the orthogonal complement. Then
\begin{itemize}
    \item $\a$ is an Abelian subalgebra and $\n=\displaystyle\bigoplus_{\alpha\in \Delta} \n_\alpha$ splits into root spaces
    \[ \n_{\alpha}=\{x \in \n \mid [h, x]=\alpha(h)x  \text{ for all } h \in \a \}\] 
with $[\n_{\alpha}, \n_{\beta}] \subseteq \n_{\alpha + \beta}$. 
    \item Let $\epsilon_1 , \ldots,  \epsilon_r$ indicate the roots whose root spaces $J$ maps into $\a$. 
    Then $r=\dim \a\leq \dim \n$. Up to relabelling, furthermore, all roots are of the form
$$\tfrac{1}{2} \epsilon_k, \; \epsilon_k, \text{ for } 1 \leq k \leq r,  
\quad \text{or} \quad 
 \tfrac{1}{2} ( \epsilon_{i} \pm \epsilon_{j}), \text{ for } 1 \leq i <j \leq r$$
(but some of these combinations may not be roots). 
\item If $ x \in \n_{(\epsilon_{i} - \epsilon_{j})/2}$ and $ y \in \n_{\epsilon_j/2}$, or 
$ y \in \n_{\epsilon_j}$, are non-zero, then  $[x,y]\neq 0$,
\item $J\n_{\epsilon_k/2}=\n_{\epsilon_k/2}$, \; $J\n_{(\epsilon_{i} + \epsilon_{j})/2}=\n_{(\epsilon_{i} - \epsilon_{j})/2}$.
\end{itemize}
\end{thm}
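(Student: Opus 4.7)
The plan is to combine three ingredients: (i) the real root-space decomposition available for completely solvable Lie algebras, (ii) the constraints imposed by the K\"ahler form $\omega=-\D f$, and (iii) the integrability of $J$ (encoded by the vanishing Nijenhuis tensor $N_J$).

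First I would choose $\a$ to be a maximal Abelian subalgebra on which the simultaneous adjoint action $\ad_{\a}$ is $\R$-diagonalizable on $\n$. That such an $\a$ exists and coincides with the orthogonal complement of $\n=[\d,\d]$ with respect to $\pint$ combines complete solvability (which guarantees all eigenvalues are real) with the compatibility $\omega=-\D f$ (used to split each $\ad_h$ into symmetric and skew parts and force the skew part to vanish for $h\in\a$). Abelianness of $\a$ follows because $[\a,\a]\subseteq \n$, while each $\ad_{[h_1,h_2]}$ must be nilpotent; complete solvability then yields $[\a,\a]=0$. Simultaneous diagonalization produces $\n=\bigoplus_\alpha \n_\alpha$, and the inclusion $[\n_\alpha,\n_\beta]\subseteq \n_{\alpha+\beta}$ is a routine Jacobi computation.

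The core of the theorem is the classification of the roots. The relevant observation is that for $x\in \n_\alpha$ and $h\in\a$ one has
\[
\ad_h(Jx)-J\ad_h(x)=[h,Jx]-J[h,x],
\]
and this commutator is constrained by $N_J=0$. A direct case analysis reveals that $J\n_\alpha$ is contained either in $\a$ or in a single other root space, so $J$ essentially permutes the root spaces modulo its restriction to $\a$. Declaring $\epsilon_1,\dots,\epsilon_r$ to be those roots with $J\n_{\epsilon_k}\subseteq \a$ produces exactly $r=\dim \a$ such roots, because $J\colon\bigoplus_k \n_{\epsilon_k}\to\a$ is forced to be an isomorphism, and $r\leq \dim \n$ is then a dimension inequality. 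Writing any other root $\alpha$ and tracking the $\ad_{\a}$-eigenvalues of $Jx$ and $J^2x$ then restricts $\alpha$ to the prescribed list $\{\tfrac12\epsilon_k,\epsilon_k,\tfrac12(\epsilon_i\pm\epsilon_j)\}$.

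The remaining assertions are consequences of the preceding analysis. The identities $J\n_{\epsilon_k/2}=\n_{\epsilon_k/2}$ and $J\n_{(\epsilon_i+\epsilon_j)/2}=\n_{(\epsilon_i-\epsilon_j)/2}$ come by matching root eigenvalues. The non-vanishing of $[x,y]$ for $x\in \n_{(\epsilon_i-\epsilon_j)/2}$ and $y\in \n_{\epsilon_j/2}$ or $\n_{\epsilon_j}$ is read off from the positive-definiteness of $\langle\cdot,\cdot\rangle=f[J\cdot,\cdot]$: were such a bracket to vanish, one would produce an isotropic direction for $\omega$ inside $\n_{\epsilon_i/2}$ or $\n_{\epsilon_i}$, contradicting non-degeneracy. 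I expect the main obstacle to be the middle step, pinning down the admissible root list, since it requires juggling the Lie bracket, the complex structure and the $1$-form $f$ simultaneously; this is exactly where the full strength of the normal $j$-algebra axioms enters, as opposed to merely ``completely solvable K\"ahler-exact''.
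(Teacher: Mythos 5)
The paper does not prove this theorem at all: it is imported verbatim from Piatetskii-Shapiro's book (the citation \cite{shapiro}), supplemented later by D'Atri's orthogonality result. So there is no in-paper argument to compare against; your proposal has to stand on its own as a proof of a known, but genuinely deep, structure theorem.

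As it stands it does not: the two hardest points are asserted rather than proved. First, the simultaneous $\R$-diagonalizability of $\ad_h$ for $h\in\a$ is not a consequence of complete solvability (which only gives real eigenvalues and triangularizability); the claim that the skew part of $\ad_h$ ``is forced to vanish'' by $\omega=-\D f$ is exactly the content of D'Atri's orthogonality theorem and of the semisimplicity of the $\ad_h$, and needs a real argument --- the theorem's statement $\n_\alpha=\{x:[h,x]=\alpha(h)x\}$ with $\n=\bigoplus\n_\alpha$ already presupposes it. Second, and more seriously, the sentence ``a direct case analysis reveals that $J\n_\alpha$ is contained either in $\a$ or in a single other root space, so $J$ essentially permutes the root spaces'' is the entire theorem. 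In Piatetskii-Shapiro's treatment this is established by an induction on $\dim\a$, peeling off an elementary ($\dim\a=1$) normal $j$-subalgebra and showing the orthogonal complement inherits the structure; the restriction of the root list to $\{\tfrac12\epsilon_k,\epsilon_k,\tfrac12(\epsilon_i\pm\epsilon_j)\}$ and the relations $J\n_{\epsilon_k/2}=\n_{\epsilon_k/2}$, $J\n_{(\epsilon_i+\epsilon_j)/2}=\n_{(\epsilon_i-\epsilon_j)/2}$ come out of that induction, not out of a pointwise eigenvalue count on $N_J=0$. (Note also that $N_J(x,y)=0$ for $x\in\n_\alpha$, $y\in\a$ mixes $J[x,Jy]$-type terms where $Jy$ need not lie in any single root space, so ``tracking the $\ad_\a$-eigenvalues of $Jx$'' is not a closed computation.) Your argument for $r=\dim\a$ also presupposes that $J\a$ is a sum of root spaces, which is again part of what must be proved. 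Finally, the injectivity statement in the third bullet does not follow from ``non-degeneracy of $\omega$'' in one line: $f$ vanishes on the non-distinguished root spaces (Proposition \ref{ f se anula} of the paper), so one cannot directly produce an isotropic vector there; the standard proof evaluates $\|[x,y]\|^2=f[J[x,y],[x,y]]$ and uses Jacobi together with the known $J$-action on root spaces. In short, the skeleton is the right one, but every load-bearing step is deferred; citing \cite{shapiro} (as the paper does) or carrying out the induction in full would be needed.
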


This big theorem should be accompanied by D'Atri's result that the root spaces are pairwise orthogonal \cite{D}. The roots $\epsilon_1 , \ldots,  \epsilon_r$ are called {\bf distinguished} and are linearly independent. 
Moreover, each $\n_{\epsilon_k}$ has dimension one \cite{D}, so one may pick generators $x_k \in \n_{\epsilon_k} $ such that $\{ Jx_k \; ; \; 1\leq k \leq r \}$ is a basis of $\a$ satisfying 
\begin{equation}\label{anular raices}
    \epsilon_{k}(Jx_i)=0\ (i\neq k), \quad \epsilon_{k}(Jx_k)\neq 0.
\end{equation}
Even better, 
we have $[Jx_k, x_k]=\epsilon_{k}(Jx_k)x_k$, so $\omega (Jx_k, x_k)=f\big([Jx_k, x_k]\big)=\epsilon_{k}(Jx_k)f(x_k)$ and therefore
\begin{equation} \label{base ortonormal}
    0<\| x_k \|^2 
    =\epsilon_{k}(Jx_k)f(x_k).
\end{equation}
Hence  $\epsilon_{k}(Jx_k)$ and $f(x_k)$ have the same sign. And finally, by changing the generators' signs if necessary, we may even assume 
\begin{equation}\label{f(x_k)<0}
   \epsilon_{k}(Jx_k)>0, \quad  f(x_k)>0.
\end{equation}
This observation will become handy later. 
\bigbreak

From now on we will use the following notation, for $1\leq k\leq r$ and $1\leq i<j\leq r$:
\begin{equation}\label{eq: dimensiones} 
n_{k}:=\dim \n_{\epsilon_k/2}\in 2\Z, \quad \n_{ij}:=\dim \n_{(\epsilon_{i}-\epsilon_{j})/2}=\dim \n_{(\epsilon_{i}+\epsilon_{j})/2}. 
\end{equation}
We will also write $\Delta':=\Delta\setminus \{\epsilon_1 , \dots , \epsilon_r\}$ for the set of non-distinguished roots.
\medskip

\begin{example} 
{\rm
Let $\d$ be the $6$-dimensional completely solvable Lie algebra 
$$(0, 0, -e^{13}+e^{23}, -2 e^{14}- 2 e^{36}, -2 e^{25}, -e^{16}-e^{26}-e^{35})
$$
with orthonormal basis $(e_j)$. 
The complex structure $Je_1=e_4, \, Je_2=2e_5,\, Je_3= e_6$ and the 1-form $f=-\frac12 e^4-\frac14 e^5$ turn it into a normal $j$-algebra.   
To find the root spaces note that 
$$\n=\text{span}\{e_3,e_4,e_5,e_6\} \;\text{ and }\;  \a=\text{span}\{e_1,e_2\}.$$
This means there are two distinguished roots $\epsilon_1, \epsilon_2$.  
Observing that 
$$
    [e_1,e_4]=\epsilon_{1}(e_1)e_4=2e_4\qquad 
    [e_2,e_5]=\epsilon_{2}(e_2)e_5=2e_5,
$$
allows to discover that $\epsilon_{1}=2e^{1}$ and $\epsilon_{2}=2e^{2}$.\par
\quad Regarding non-distinguished roots $\alpha$, let us examine what the possibilities are.
If $\alpha=\tfrac{1}{2}\epsilon_1$, the root space is even-dimensional for being $J$-invariant, so it must be $\n_{\alpha}=\text{span}\{e_3,e_6\}$. This would force  $[e_2,e_3]=0$, a contradiction. A similar reasoning shows $\tfrac{1}{2}\epsilon_2\notin\Delta'$. Therefore a non-distinguished root might only be one of $\tfrac{1}{2}(\epsilon_{1} \pm\epsilon_{2})$, and in either case Proposition \ref{prop raices} gives 
$$\n=\n_{(\epsilon_1-\epsilon_2)/2}\oplus \n_{(\epsilon_1+\epsilon_2)/2} =  \R e_3 \oplus \R e_6.$$}
\end{example}

\bigbreak 

\subsection{Derivations of normal j-algebras} 

If $(\d,J,\pint)$ is an arbitrary Hermitian Lie algebra we denote the space of \textit{unitary} derivations by 
\[ \op{Der}_{u}(\d):=\{ D \in \op{Der}(\d)  \mid DJ=JD \text{ and } D \text{ skew-adjoint}\}.\]

\begin{lemma} \label{Da=0}
    Let  $\d=\n\oplus \a$ be a  normal $j$-algebra. Any skew derivation $D$ kills the Abelian part: $D(\a)=\{0\}$ (and hence $D(\d)\subseteq \n$). 
\end{lemma}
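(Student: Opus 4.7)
The plan is to reduce the problem to showing $D$ vanishes on $\a$ alone, by first establishing that $D$ preserves the orthogonal splitting $\d = \n \oplus \a$. Any derivation preserves the derived algebra, so $D(\n) \subseteq \n$ since $\n = [\d,\d]$. Because $\a = \n^{\perp}$ and $D$ is skew-adjoint, this invariance propagates:\ for $y \in \a$ and $x \in \n$,
$$\langle Dy, x\rangle = -\langle y, Dx\rangle = 0,$$
forcing $Dy \in \a$. Hence $D(\a) \subseteq \a$, and it remains to prove $Dh = 0$ for every $h \in \a$.

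For that step I would exploit the root-space decomposition of $\n$ under $\ad_{\a}$ from Theorem \ref{prop raices}. Fix $h \in \a$ and a distinguished generator $x_k \in \n_{\epsilon_k}$; applying $D$ as a derivation to $[h, x_k] = \epsilon_k(h)\, x_k$ gives
$$\epsilon_k(h)\, D x_k = [Dh, x_k] + [h, D x_k].$$
Since $Dh \in \a$, the first summand on the right equals $\epsilon_k(Dh)\, x_k \in \n_{\epsilon_k}$. Decomposing $D x_k = \sum_{\alpha} y_\alpha$ along $\n = \bigoplus_{\alpha \in \Delta} \n_\alpha$, the identity rearranges to
$$\sum_{\alpha \in \Delta} \bigl(\epsilon_k(h) - \alpha(h)\bigr)\, y_\alpha \;=\; \epsilon_k(Dh)\, x_k .$$

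I would then project onto the $\epsilon_k$-summand:\ the left-hand side contributes nothing there (its coefficient vanishes when $\alpha = \epsilon_k$), while the right-hand side is $\epsilon_k(Dh)\, x_k$. This forces $\epsilon_k(Dh) = 0$ for every $k$. Since \eqref{anular raices} makes $\{\epsilon_1,\ldots,\epsilon_r\}$ (up to diagonal rescaling) the dual basis of $\{Jx_1,\ldots,Jx_r\}$ in $\a^*$, these linear forms separate points of $\a$; therefore $Dh = 0$, as desired. The only slightly delicate step is the first one:\ noticing that skew-adjointness alone (no need for $DJ = JD$ here) upgrades the tautological invariance of $\n$ to invariance of $\a$. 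After that the root-space bookkeeping is essentially automatic.
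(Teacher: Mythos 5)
Your proof is correct and follows essentially the same route as the paper: both first use $D$-invariance of $\n=[\d,\d]$ plus skew-adjointness to get $D(\a)\subseteq\a$, then apply the derivation identity to $[h,x]=\alpha(h)x$ and project onto the $\n_\alpha$-component to conclude $\alpha(Dh)=0$ for enough roots $\alpha$ to span $\a^*$. The only cosmetic difference is that you restrict to the distinguished roots (which suffice by \eqref{anular raices}), whereas the paper runs the argument over all of $\Delta$.
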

\begin{proof}
For $h \in \a$ and $0\neq x \in \n_{\alpha}$ we have  $D[h, x]=[Dh, x]+[h,Dx]$. 

The commutator ideal $\n$ is $D$-invariant, and since $D$ is skew it must preserve the orthogonal complement of $\n$, namely $\a$. Therefore $D h\in\a$. Now, 
\begin{equation*}
   \alpha(h)Dx=\alpha(Dh)x+[h,Dx].
\end{equation*}
We may split $Dx= \displaystyle\sum_{\beta \in \Delta}(Dx)_{\beta}$ along 
$\displaystyle\bigoplus_{\beta\in \Delta} \n_\beta$, so that the above condition refines to 
\[  \alpha(h) \sum_{\beta \in \Delta} (Dx)_{\beta}=\alpha(Dh)x+\sum_{\beta \in \Delta} \beta(h)(Dx)_{\beta}.
\]
If $\beta= \alpha$ the component in $\n_{\alpha}$ equals 
$    \alpha(h)(Dx)_{\alpha}=\alpha(Dh)x+\alpha(h)(Dx)_{\alpha}$,
which implies $\alpha(Dh)x=0$ and so $\alpha(Dh)=0$. 
This holds for any $\alpha \in \Delta$, and $\Delta$ contains a basis of $\a^*$, forcing $Dh=0$.
\end{proof}

\begin{lemma} \label{D cero}
If $\d$ is a normal $j$-algebra, any unitary derivation kills each distinguished root space:
$$\forall\,D\in \op{Der}_u(\d), \quad  D(\n_{\epsilon_i})=\{0\},
\quad i=1,\ldots,  r.$$
\end{lemma}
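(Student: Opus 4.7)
The plan is to exploit the fact that each $\n_{\epsilon_i}$ is one-dimensional together with the eigenvalue structure prescribed by Theorem \ref{prop raices}. Pick a generator $x_i \in \n_{\epsilon_i}$; since $J$ sends $\n_{\epsilon_i}$ into $\a$, Lemma \ref{Da=0} applies and gives $D(Jx_i)=0$. Applying $D$ to the identity $[Jx_i, x_i]=\epsilon_i(Jx_i)\,x_i$ and using the derivation property then yields
\[
\epsilon_i(Jx_i)\,Dx_i \;=\; [Jx_i, Dx_i].
\]

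Next I would decompose $Dx_i=\sum_{\beta\in\Delta}(Dx_i)_\beta$ along the root-space splitting. The previous equation becomes
\[
\sum_{\beta\in\Delta}\bigl(\epsilon_i(Jx_i)-\beta(Jx_i)\bigr)(Dx_i)_\beta=0,
\]
so by linear independence of the $\n_\beta$ each summand must vanish. The key calculation is to evaluate $\beta(Jx_i)$ for every root $\beta$ listed in Theorem \ref{prop raices}, using the normalisation \eqref{anular raices} (i.e.\ $\epsilon_k(Jx_i)=0$ for $k\neq i$ and $\epsilon_i(Jx_i)\neq 0$). A quick case check over the possible shapes $\tfrac12\epsilon_k$, $\epsilon_k$, $\tfrac12(\epsilon_k\pm \epsilon_l)$ shows that $\beta(Jx_i)=\epsilon_i(Jx_i)$ can happen only when $\beta=\epsilon_i$. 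Consequently $Dx_i$ lies in the one-dimensional space $\n_{\epsilon_i}$, i.e.\ $Dx_i=\lambda x_i$ for some $\lambda\in\R$.

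Finally I would invoke skew-adjointness of $D$: from $\langle Dx_i,x_i\rangle=-\langle x_i,Dx_i\rangle$ we get $2\lambda\|x_i\|^2=0$, hence $\lambda=0$. Since $\n_{\epsilon_i}=\R x_i$, the derivation $D$ vanishes on the whole distinguished root space. The only subtle point — and the step that carries most of the content — is the case analysis establishing that $\epsilon_i$ is the unique root with $\beta(Jx_i)=\epsilon_i(Jx_i)$; everything else is essentially formal. No further obstacle is expected.
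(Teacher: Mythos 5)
Your proof is correct, but it takes a genuinely different route from the paper's. The paper's argument is a two-liner: since $J\n_{\epsilon_i}\subset\a$ and Lemma \ref{Da=0} gives $D(\a)=\{0\}$, one has $DJ(\n_{\epsilon_i})=\{0\}$; the commutation $DJ=JD$ then yields $JD(\n_{\epsilon_i})=\{0\}$, and invertibility of $J$ finishes it. You instead apply $D$ to the relation $[Jx_i,x_i]=\epsilon_i(Jx_i)x_i$, decompose $Dx_i$ along the root spaces (legitimate, since the parenthetical in Lemma \ref{Da=0} gives $D(\d)\subseteq\n$), and run the eigenvalue case check $\beta(Jx_i)\ne\epsilon_i(Jx_i)$ for $\beta\ne\epsilon_i$ --- which is indeed correct for every shape $\epsilon_k$, $\tfrac12\epsilon_k$, $\tfrac12(\epsilon_k\pm\epsilon_l)$ given the normalisation \eqref{anular raices} and $\epsilon_i(Jx_i)\ne0$ --- before killing the remaining $\n_{\epsilon_i}$-component by skew-adjointness. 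Notably, your argument never uses $DJ=JD$: it only needs $D$ to be a skew-adjoint derivation, so it actually proves the slightly stronger statement that every \emph{skew} derivation annihilates the distinguished root spaces, at the cost of being considerably longer than the paper's commutation trick.
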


\begin{proof}
As $J \n_{\epsilon_i} \subset \a$, it follows from Lemma \ref{Da=0} that $D J (\n_{\epsilon_i})=\{0\}$. But $D$ and $J$ commute so $J D \n_{\epsilon_i}=\{0\}$, ie $D$ kills $\n_{\epsilon_i}$.
\end{proof}

\medskip
In the light of that, unitary derivations might only salvage non-distinguished root spaces. In the following two technical results we analyse this action in detail.

\begin{lemma} \label{D invariante}
If $D \in \op{Der}_{u}(\d)$ then   $D(\n_{\epsilon_i/2}) \subseteq \n_{\epsilon_i/2}$.
\end{lemma}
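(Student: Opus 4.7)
The plan is that a unitary derivation $D$ becomes $\ad_\a$-equivariant as soon as it is known to annihilate $\a$, and from there a standard weight-space argument shows that $D$ preserves every root space, with $\n_{\epsilon_i/2}$ being the special case of interest.

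First I would invoke Lemma \ref{Da=0} to get $D(\a)=\{0\}$; only the skew-adjointness of $D$ is needed for this, not the commutation with $J$. Fix $x\in \n_{\epsilon_i/2}$. Since $D$ is a derivation and $Dh=0$ for $h\in\a$, we have
\[ [h, Dx] \;=\; D[h,x] - [Dh,x] \;=\; D[h,x] \;=\; \tfrac{\epsilon_i}{2}(h)\,Dx \qquad \text{for all } h\in\a.\]
Expanding $Dx=\sum_{\beta\in\Delta}(Dx)_\beta$ along the root decomposition of $\n$ and comparing components on both sides gives
\[ \bigl(\beta(h) - \tfrac{\epsilon_i}{2}(h)\bigr)\,(Dx)_\beta \;=\; 0 \qquad \text{for all } h\in\a\ \text{ and all } \beta\in\Delta.\]
Because distinct roots restrict to distinct linear functionals on $\a$, the factor in parentheses is non-zero whenever $\beta\neq \epsilon_i/2$, and so $(Dx)_\beta=0$ for every such $\beta$. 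Hence $Dx$ lies in $\n_{\epsilon_i/2}$, as claimed.

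The only conceptual point to check is that the root decomposition of Theorem \ref{prop raices} is an honest simultaneous eigenspace decomposition of $\ad_\a$, not a generalised one; this is automatic since $\d$ is completely solvable, so that each $\ad_h$ for $h\in\a$ is diagonalisable over $\R$ in the root basis. I expect this to be the sole (mild) subtlety, since the algebraic manipulation itself is one line. As a by-product the same argument yields $D(\n_\alpha)\subseteq \n_\alpha$ for every $\alpha\in\Delta$, so in fact $D$ preserves the entire root-space grading; the subsequent lemma presumably will use exactly this refined statement applied to non-distinguished roots.
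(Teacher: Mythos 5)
Your proof is correct, but it follows a genuinely different (and in fact more economical) route than the paper's. The paper first invokes Lemma \ref{D cero} --- which relies on the commutation $DJ=JD$ --- to kill the components of $Du_i$ along the distinguished root spaces $\n_{\epsilon_j}$, and then brackets only with the special elements $h_j=Jx_j$ for $j\neq i$, on which $\epsilon_i$ vanishes by \eqref{anular raices}; it concludes by a case analysis showing that every non-distinguished root other than $\tfrac12\epsilon_i$ is non-zero on some such $h_j$. You instead exploit the full $\a$-equivariance $[h,Dx]=\tfrac12\epsilon_i(h)\,Dx$ for \emph{all} $h\in\a$ and the fact that distinct roots are, by definition, distinct linear functionals on $\a$; this handles all components of $Dx$ uniformly, including the distinguished ones, so you never need Lemma \ref{D cero} nor the unitarity of $D$ beyond what Lemma \ref{Da=0} already uses. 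Two small points: you should say explicitly that $Dx$ has no $\a$-component --- this is the parenthetical $D(\d)\subseteq\n$ in Lemma \ref{Da=0}, coming from $D[\d,\d]\subseteq[\d,\d]=\n$ --- and your worry about generalised eigenspaces is moot, since the paper defines $\n_\alpha$ as an honest simultaneous eigenspace and Theorem \ref{prop raices} asserts $\n=\bigoplus_\alpha\n_\alpha$. Your by-product observation is also right and worth noting: the same computation shows that any skew derivation annihilating $\a$ preserves \emph{each} root space $\n_\alpha$ individually, which is strictly stronger than Lemma \ref{D suma} (it forces the block $B(h)$ in the proof of Proposition \ref{conmutador modi igual} to vanish), though it does not recover Lemma \ref{D cero}, where unitarity is genuinely needed.
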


\begin{proof}
Pick $u_i \in \n_{\epsilon_i/2}$. As $D$ maps it into $\bigoplus_{\alpha} \n_\alpha$, as before we write $Du_i=\sum_{\alpha}(Du_i)_{\alpha}$. 
For starters, $\langle Du_i,  \n_{\epsilon_j}\rangle = - \langle u_i, D\n_{\epsilon_j} \rangle=0$ for any $j$, due to Lemma \ref{D cero}. Hence all  $(Du_i)_{\n_{\epsilon_j}}$ vanish, and as a consequence $Du_i=\sum_{\alpha \in \Delta'}(Du_i)_{\alpha}$. 
\par
\quad We claim that $(Du_i)_{\alpha}=0$ for all $\alpha\in \Delta'$, $\alpha \neq \epsilon_i/2$. Pick $h_j =Jx_j \in \a$, where $\n_{\epsilon_j}= \R x_j$ ($i \neq j)$. Then $D[h_j,u_i]=D\left (\tfrac{1}{2}\epsilon_i(h_j)u_i\right)=0$ by \eqref{anular raices}. On the other hand, Lemma \ref{Da=0} tells us 
$D[h_j,u_i]=[Dh_j,u_i]+[h_j,Du_i]=[h_j,Du_i]$, and 
\begin{equation*}
    [h_j,Du_i]=\left[[h_j, \sum_{\alpha \in \Delta'}(Du_i)_{\alpha}\right]=\sum_{\alpha \in \Delta'}[h_j,(Du_i)_{\alpha}].
\end{equation*}
Hence $ \sum_{\alpha \in \Delta'} \alpha(h_j)(Du_i)_{\alpha}=0$, 
which implies
\[ \alpha(h_j)(Du_i)_{\alpha}=0 \quad \text{for all} \quad \alpha \in \Delta'. \]

If $\alpha$ equals $\tfrac{1}{2} \epsilon_j$ or $\pm\tfrac{1}{2}(\epsilon_{j} \pm \epsilon_{k})$ (any sign combination) 
for $k\neq j$, necessarily $(Du_i)_{\alpha}=0$. But that holds for any $j \neq i$, so the only chance for $(Du_i)_{\alpha}$ to be non-zero is that $\alpha = \epsilon_i/2$, meaning $Du_i \in \n_{\epsilon_i/2}$.
\end{proof}

\begin{lemma} \label{D suma}
If $D \in \op{Der}_{u}(\d)$ then   $D(\n_{(\epsilon_{i}\pm \epsilon_{j})/2})\subseteq \n_{(\epsilon_{i}-\epsilon_{j})/2}\oplus \n_{(\epsilon_{i}+\epsilon_{j})/2}$ for any $i<j$.
\end{lemma}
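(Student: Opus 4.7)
The proof will follow the same template as Lemma \ref{D invariante}, but it needs one extra ingredient: Lemma \ref{D invariante} itself, together with D'Atri's orthogonality of distinct root spaces. Fix $x\in \n_{(\epsilon_i+\epsilon_j)/2}$ (the case $x\in \n_{(\epsilon_i-\epsilon_j)/2}$ is treated identically) and decompose $Dx=\sum_{\alpha\in \Delta}(Dx)_\alpha$ along the root-space decomposition of $\n$ (recall $D(\d)\subseteq \n$ by Lemma \ref{Da=0}).

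First I would strip off all distinguished components. Since $D$ is skew-adjoint and annihilates every $\n_{\epsilon_k}$ (Lemma \ref{D cero}), one has $\langle Dx,\n_{\epsilon_k}\rangle=-\langle x,D\n_{\epsilon_k}\rangle=0$, hence $(Dx)_{\epsilon_k}=0$ for all $k$. So the sum reduces to non-distinguished roots.

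Next I would exploit the action of $\a$. For every $k\neq i,j$ set $h_k=Jx_k$; by \eqref{anular raices} we get $[h_k,x]=\tfrac12\bigl(\epsilon_i(h_k)+\epsilon_j(h_k)\bigr)x=0$. Applying $D$ and using Lemma \ref{Da=0} to kill $Dh_k$, this yields
\[
0=[h_k,Dx]=\sum_{\alpha\in \Delta'}\alpha(h_k)\,(Dx)_\alpha,
\]
so $\alpha(h_k)(Dx)_\alpha=0$ for each non-distinguished $\alpha$ and every $k\notin\{i,j\}$. Inspecting Piatetskii-Shapiro's list of non-distinguished roots, one sees $\alpha(h_k)$ vanishes for all $k\neq i,j$ exactly when
\[
\alpha\in\Bigl\{\tfrac12\epsilon_i,\ \tfrac12\epsilon_j,\ \tfrac{\epsilon_i-\epsilon_j}{2},\ \tfrac{\epsilon_i+\epsilon_j}{2}\Bigr\}.
\]
Any other non-distinguished root meets some $h_k$ non-trivially with $k\neq i,j$, so its $(Dx)_\alpha$ component vanishes.

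Finally I would rule out the two half-root pieces $\alpha=\tfrac12\epsilon_i$ and $\alpha=\tfrac12\epsilon_j$. By Lemma \ref{D invariante}, $D$ preserves $\n_{\epsilon_i/2}$, so for every $u\in \n_{\epsilon_i/2}$ the element $Du$ lies in $\n_{\epsilon_i/2}$. Skew-adjointness and D'Atri's theorem then give
\[
\bigl\langle Dx,u\bigr\rangle=-\langle x,Du\rangle=0,
\]
because $x\in \n_{(\epsilon_i+\epsilon_j)/2}$ and $Du\in \n_{\epsilon_i/2}$ belong to distinct (hence orthogonal) root spaces. Thus $(Dx)_{\epsilon_i/2}=0$, and the symmetric argument with $j$ in place of $i$ disposes of $(Dx)_{\epsilon_j/2}$. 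What remains is exactly $Dx\in \n_{(\epsilon_i-\epsilon_j)/2}\oplus \n_{(\epsilon_i+\epsilon_j)/2}$, as claimed.

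The only mildly delicate point is the last step: one must first have Lemma \ref{D invariante} at hand so as to confine $Du$ to a root space orthogonal to $x$; the rest is a direct imitation of the previous lemma.
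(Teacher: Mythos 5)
Your proof is correct and follows essentially the same route as the paper: kill the distinguished components by skew-adjointness and Lemma \ref{D cero}, use $[h_k,\cdot]$ for $k\neq i,j$ to eliminate all non-distinguished roots except $\tfrac12\epsilon_i$, $\tfrac12\epsilon_j$, $\tfrac12(\epsilon_i\pm\epsilon_j)$, and then dispose of the two half-roots via Lemma \ref{D invariante} and orthogonality. The only cosmetic difference is that the paper deduces the $\n_{(\epsilon_i-\epsilon_j)/2}$ case from $[D,J]=0$ and $J\n_{(\epsilon_i+\epsilon_j)/2}=\n_{(\epsilon_i-\epsilon_j)/2}$, whereas you rerun the same argument directly; both work.
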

\begin{proof}
We will prove the result only for $\tfrac{1}{2} (\epsilon_i+\epsilon_j)$, as the other case is a straightforward consequence of $[D, J]=0$. 
Let $u_{ij} \in \n_{(\epsilon_{i}+\epsilon_{j})/2}$ with $i<j$, so as customary 
\[ Du_{ij}=\sum_{\alpha \in \Delta'}(Du_{ij})_{\alpha}, \quad \text{with} \quad (Du_{ij})_{\alpha}\in \n_\alpha, \]
because $\langle Du_{ij},\n_{\epsilon_k}\rangle=-\langle u_{ij},D\n_{\epsilon_k}\rangle=0$ by Lemma \ref{D cero}. Pick a generator for $\n_{\epsilon_k}=\R x_k$ (with $k \neq i,j$) and set $h_k=Jx_{k} \in \a$, so that 
\[D[h_k,u_{ij}]=D\left(\frac{1}{2}(\epsilon_{i}+\epsilon_{j})(h_k)u_{ij}\right)=0 \]
by virtue of \eqref{anular raices}. At the same time, Lemma \ref{Da=0} guarantees that 
\[ D[h_k,u_{ij}]= 
[h_k,Du_{ij}]=\left[h_k, \sum_{\alpha \in \Delta'} (Du_{ij})_{\alpha}\right]=\sum_{\alpha \in \Delta'}\alpha(h_k)(Du_{ij})_{\alpha}. \]
It follows that $\alpha(h_k)(Du_{ij})_{\alpha}=0$ for all $\alpha \in \Delta'$.

If $\alpha = \tfrac{1}{2} \epsilon_k$ or $\pm\tfrac{1}{2}(\epsilon_{k}\pm \epsilon_{l})$ with $l\neq k$, then $(Du_{ij})_{ \alpha}=0$. But that holds for all  $k\neq i,j$, so we only need to verify that $(Du_{ij})_{ \alpha}=0$ when $\alpha= \epsilon_i/2$ or $\epsilon_j/2$. 
Now, for any $v_{i} \in \n_{{\epsilon_i/2}}$ Lemma \ref{D invariante} says 
$Dv_i\in \n_{\frac12\epsilon_i}$ and so 
 \[ \langle Du_{ij},v_i \rangle=-\langle u_{ij},Dv_i \rangle =0.\] 
Consequently  $(Du_{ij})_{\epsilon_i/2}=0$. Swapping the indices' roles  produces $(Du_{ij})_{\epsilon_j/2}=0$ as well. All in all
\begin{equation*}
    Du_{ij}=(Du_{ij})_{(\epsilon_{i}-\epsilon_{j})/2} + (Du_{ij})_{(\epsilon_{i}+\epsilon_{j})/2},
\end{equation*}
completing the proof.
\end{proof}
For future reference, we summarise the previous four lemmas as follows: unitary derivations of normal $j$-algebras annihilate the Abelian part $\a$ and all distinguished root spaces $\n_{\epsilon_i}$, while they preserve every $\n_{\epsilon_i/2}$ and the sums $\n_{(\epsilon_{i}-\epsilon_{j})/2}\oplus \n_{(\epsilon_{i}+\epsilon_{j})/2}$ (non-distinguished roots).
\bigskip

As we showed earlier on, normal $j$-algebras are special instances of  `symplectic-exact' Lie algebras. Hence they are not unimodular by Hano's theorem \cite{Hano} (see \cite[Proposition 3.4]{frob} as well). Examples of non-unimodular, completely solvable, invariant K{\"a}hler structures are provided in \cite{Gindikin-PS-V}.
Notwithstanding, we can prove that fact directly from definition \ref{def= normal j-algebra}.

\begin{prop} \label{traza en a}
Let $\d=\a\oplus \bigoplus_{\alpha} \n_{\alpha}$ be a normal $j$-algebra, and suppose $\{h_1, \ldots,  h_r \}$ a basis of $\a$ with $h_{i}=Jx_i$, $\n_{\epsilon_i}=\R x_i$. Then $\tr \ad_{h_i}\neq 0$.
\end{prop}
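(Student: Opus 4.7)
The plan is to compute $\tr \ad_{h_i}$ explicitly using the root-space decomposition, and then show that all contributions combine into an expression that is manifestly positive thanks to the normalisations \eqref{anular raices} and \eqref{f(x_k)<0}. Since $\a$ is Abelian, $\ad_{h_i}$ vanishes on $\a$; since $[\a, \n_\alpha] \subseteq \n_\alpha$ and $h_i$ acts as the scalar $\alpha(h_i)$ on each root space, we have
\begin{equation*}
\tr \ad_{h_i} = \sum_{\alpha \in \Delta} \alpha(h_i)\, \dim \n_\alpha.
\end{equation*}

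The point is now that \eqref{anular raices} makes most terms vanish: $\epsilon_k(h_i) = \epsilon_k(Jx_i) = 0$ whenever $k \ne i$. Separating the sum into the four families of roots listed in Theorem \ref{prop raices}, one finds that the distinguished root $\epsilon_i$ contributes $\epsilon_i(h_i)\cdot 1$; the half-root $\epsilon_i/2$ contributes $\tfrac12 \epsilon_i(h_i)\,n_i$; and the mixed roots $\tfrac12(\epsilon_k \pm \epsilon_l)$ with $k<l$ contribute (when grouped in pairs sharing the same indices $k<l$ and using $\dim \n_{(\epsilon_k+\epsilon_l)/2} = \dim \n_{(\epsilon_k-\epsilon_l)/2}= n_{kl}$ from \eqref{eq: dimensiones}):
\begin{equation*}
\left[\tfrac{1}{2}\bigl(\epsilon_k(h_i)+\epsilon_l(h_i)\bigr) + \tfrac{1}{2}\bigl(\epsilon_k(h_i)-\epsilon_l(h_i)\bigr)\right] n_{kl} = \epsilon_k(h_i)\, n_{kl}.
\end{equation*}
The key cancellation occurs here: when $i = l > k$ the two half-mixed roots contribute $\pm \tfrac12\epsilon_i(h_i)$ and annihilate one another, so the only surviving mixed contributions come from $k = i$ with $l > i$, each giving $\epsilon_i(h_i)\,n_{il}$.

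Collecting everything,
\begin{equation*}
\tr \ad_{h_i} = \epsilon_i(h_i)\left[\,1 + \tfrac{n_i}{2} + \sum_{l>i} n_{il}\,\right].
\end{equation*}
The bracket is an integer at least $1$, and $\epsilon_i(h_i) = \epsilon_i(Jx_i) > 0$ by \eqref{f(x_k)<0}. Therefore $\tr \ad_{h_i} > 0$, and in particular is non-zero, as claimed. The main obstacle is purely organisational: correctly handling the pair of mixed roots $\tfrac12(\epsilon_k \pm \epsilon_l)$, where for $i=l>k$ the two contributions cancel, but for $i=k<l$ they reinforce one another; once this bookkeeping is right, the inequality is immediate from \eqref{f(x_k)<0}.
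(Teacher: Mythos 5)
Your proof is correct and follows essentially the same route as the paper: compute $\tr\ad_{h_i}=\sum_{\alpha}\alpha(h_i)\dim\n_\alpha$ over the root decomposition, kill the terms with $\epsilon_k(h_i)=0$ for $k\neq i$ via \eqref{anular raices}, and conclude positivity from $\epsilon_i(Jx_i)>0$ in \eqref{f(x_k)<0}. If anything, your explicit treatment of the cancellation between $\tfrac12(\epsilon_k+\epsilon_i)$ and $\tfrac12(\epsilon_k-\epsilon_i)$ for $k<i$ is slightly more careful than the paper's tabulation, which only records the mixed roots with $i$ as the smaller index.
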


\begin{proof}
As  $\a$ is Abelian the trace of $\ad_{h_i}$ is calculated just by considering an orthonormal basis $(e_i)$ of $\n$, and then $\tr\ad_{h_i}={\sum}_j\langle \ad_{h_i} e_j, e_j \rangle$. 
Suppose the element $e_j$ lives in the root space $\n_\alpha$. Then $\langle [h_i, e_{j}],e_{j}\rangle= \alpha(h_i)$ will not be zero if $ \alpha$ is $\epsilon_i$, $\tfrac{1}{2}\epsilon_{i}$, or $\tfrac{1}{2}(\epsilon_{i} \pm \epsilon_{k})$ with $1\leq i<k \leq n$. To be more precise, we have

\begin{longtable}[c]{c || c | c | c | c } 
$ \alpha$ &  $\epsilon_i$ & $\tfrac{1}{2}\epsilon_{i}$ & $\tfrac{1}{2}(\epsilon_{i}+\epsilon_{k})$ & $\tfrac{1}{2}(\epsilon_{i}-\epsilon_{k})$ \\ \midrule
$\langle [h_{i},e_{\alpha}],e_{\alpha} \rangle$ & 
$\epsilon_{i}(h_i)$ & $\tfrac{1}{2}\epsilon_{i}(h_i)$ & $\tfrac{1}{2}\epsilon_{i}(h_i)$ & $\tfrac{1}{2}\epsilon_{i}(h_i)$
\end{longtable}

Note that the values in the last two columns are the same. 
Equation \eqref{f(x_k)<0} says 
\begin{equation} \label{eq: traza}
\tr\ad_{h_i}=\sum_{j=1}^n \langle [h_i,e_j],e_j\rangle=\epsilon_{i}(h_i)+ \frac{1}{2}\epsilon_{i}(h_i)n_{i}+2\sum_{i<k}\frac{1}{2}\epsilon_{i}(h_i)n_{ik}> 0
\end{equation}
as dimensions are non-negative.
\end{proof}
\bigbreak

With all of the above in place we are ready to classify normal $j$-algebras of dimension $4$:

\begin{thm} \label{normal 4}
A four-dimensional normal $j$-algebra is isomorphic (as a Lie algebra) to either $\aff(\R)\times \aff(\R)$ or a semi-direct product $\R\ltimes \h_3$. More precisely, it belongs to one of two families of isomorphic Lie algebras:
\begin{itemize}
    \item[i)] $\d_{a,b}:=\op{span}\{h_1, x_1\}\oplus\op{span}\{h_2, x_2\}$, where $Jx_{1}=h_1$, $Jx_{2}=h_2$ and 
$$ [h_1, x_1]=a x_1, \quad [h_2,x_2]=b x_2, \qquad a,b> 0;$$
 \item[ii)] $\d_{a}:=\R h_1\ltimes \op{span}\{x_1 , x_2, x_3\}$, where $Jx_1=h_1 , \; Jx_3 = x_2$ and 
$$ [h_1,x_1]=ax_1, \quad
    [h_1,x_2]=\frac{a}{2}x_2, \quad
    [h_1,x_3]= \frac{a}{2}x_3, \quad
    [x_2, x_3]=ax_1, \qquad a> 0.$$
\end{itemize}
The basis in each case is orthonormal.
\end{thm}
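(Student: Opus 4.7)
The strategy is to apply Piatetskii-Shapiro's decomposition (Theorem \ref{prop raices}) to the normal $j$-algebra $\d = \n \oplus \a$ of dimension $4$ and read off the brackets case by case. Since $r := \dim \a \leq \dim \n$ and $\dim \a + \dim \n = 4$, necessarily $r \in \{1,2\}$, and I expect these two configurations to deliver the families $\d_{a,b}$ and $\d_a$ respectively.

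Suppose first $r = 2$, so $\dim \n = 2$. As each $\n_{\epsilon_k}$ is one-dimensional by \cite{D} and they already fill $\n$, there are no non-distinguished root spaces. Choose generators $x_k \in \n_{\epsilon_k}$ normalised via \eqref{base ortonormal} so that $\|x_k\| = 1$, and set $h_k := J x_k$; D'Atri's pairwise orthogonality of root spaces makes $(h_1, x_1, h_2, x_2)$ orthonormal. Since $\epsilon_1 + \epsilon_2$ is not a root, $[x_1, x_2] = 0$, and \eqref{anular raices} gives $[h_i, x_j] = \delta_{ij}\, \epsilon_i(h_i)\, x_i$. Setting $a := \epsilon_1(h_1)$, $b := \epsilon_2(h_2)$ (both positive by \eqref{f(x_k)<0}) produces exactly the brackets of $\d_{a,b}$, which as a Lie algebra is $\aff(\R) \times \aff(\R)$.

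For $r = 1$, we have $\dim \n_{\epsilon_1} = 1$ and therefore $\dim \n_{\epsilon_1/2} = 2$ (no roots $\tfrac12(\epsilon_i \pm \epsilon_j)$ exist when $r = 1$). Pick a unit vector $x_1 \in \n_{\epsilon_1}$ and an orthonormal basis $(x_2, x_3)$ of $\n_{\epsilon_1/2}$ with $J x_3 = x_2$ (hence $J x_2 = -x_3$), and set $h_1 := J x_1$. The root-space action yields $[h_1, x_1] = a\, x_1$ and $[h_1, x_j] = \tfrac{a}{2}\, x_j$ ($j = 2,3$), with $a := \epsilon_1(h_1) > 0$. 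The subtle step is evaluating $[x_2, x_3]$: this lies in $\n_{\epsilon_1} = \R x_1$, so $[x_2, x_3] = c\, x_1$ for some $c \in \R$. To pin $c$ down, compute $\omega(x_2, x_3)$ in two ways: from $\omega(x,y) = \langle x, J y \rangle$ one gets $\omega(x_2, x_3) = 1$, while from $\omega = -\D f$ one gets $\omega(x_2, x_3) = f[x_2, x_3] = c\, f(x_1) = c/a$, using \eqref{base ortonormal}. Hence $c = a$, and recognising the Heisenberg bracket on $\op{span}\{x_1, x_2, x_3\}$ exhibits $\d_a$ as $\R h_1 \ltimes \h_3$.

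The main obstacle is tracking the normalisations and sign conventions ($\omega = -\D f$, $J^2 = -\Id$ forcing $J x_2 = -x_3$, and \eqref{base ortonormal} giving $f(x_1) = 1/a$) to arrive unambiguously at $c = a$; a consistency check via the Jacobi identity on $(h_1, x_2, x_3)$, both sides of which evaluate to $a^2 x_1$, rules out sign errors.
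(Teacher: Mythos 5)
Your proposal is correct and follows essentially the same route as the paper's proof: split according to $\dim\a\in\{1,2\}$, invoke the root-space structure of Theorem \ref{prop raices} together with \eqref{anular raices} and \eqref{f(x_k)<0} to fix the brackets, and in the $r=1$ case determine the coefficient of $[x_2,x_3]$ by evaluating $\omega(x_2,x_3)$ both as $\langle x_2,Jx_3\rangle=1$ and as $f[x_2,x_3]=c\,f(x_1)=c/a$. The paper performs exactly this comparison (writing $a f(x_1)=\omega(h_1,x_1)=1=\omega(x_2,x_3)=b f(x_1)$), so no further comment is needed.
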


\begin{proof}
As $\dim \a\leq \dim [\d, \d]$, the Abelian part can only be one- or two-dimensional, and we shall examine the occurrences separately.\par 
\quad If  $\a$ is a plane there are $2$ distinguished roots $\epsilon_1, \epsilon_2$, and $\n = \n_{\epsilon_1} \oplus \n_{\epsilon_2}$ is Abelian, by dimensional reasons. Pick orthonormal generators $x_i\in \n_{\epsilon_i}$, and define $h_i= Jx_i$, an orthonormal basis of $\a$, so 
$ [h_i, x_j]=\epsilon_j(h_i)x_j$. 
It follows from \eqref{anular raices} that $\epsilon_j(h_i)=0$ when the indices are different. The numbers $a:=\epsilon_{1}(h_1)$ and $b:=\epsilon_{2}(h_2)$ can be supposed positive in view of \eqref{f(x_k)<0}. 
Rescaling the basis of the two affine factors yields the product $\aff(\R)\times \aff(\R)$.
\par
\quad  If $\dim \mathfrak{a} = 1$ there exists a unique distinguished root $\epsilon$, and by Theorem \ref{prop raices} only one non-distinguished root $\frac12\epsilon$. Thus $\n= \n_{\epsilon} \oplus \n_{\epsilon/2}$ with $\dim \n_{\epsilon/2} =2$. 
    Choose an orthonormal basis for $\n$ such that $\n_{\epsilon}= \R x_1$ and $\n_{\epsilon/2}= \text{span}\{x_2, x_3\}$ with $x_2=Jx_3$ (recall $\n_{\epsilon/2}$ is $J$-invariant by Theorem \ref{prop raices}). As $[\n_{\epsilon/2},\n_{\epsilon/2}] \subseteq \n_{\epsilon}$, we write $[x_2,x_3]=b x_1$ for some $b \in \R$. Moreover, $x_1$ commutes with $x_2$ and $x_3$ since $\epsilon+\frac12 \epsilon$ is not a root. Setting $h_1:=Jx_1\in \a$, we have the following brackets:
$$
    [h_1,x_1]= ax_1, \quad
    [h_1,x_2]=\frac{a}{2}x_2, \quad
    [h_1,x_3]=\frac{a}{2}x_3, \quad 
    [x_2,x_3]=b x_1
$$
where $a = \epsilon_{1}(h_1) > 0$. Applying $f$ to the first and last relationships gives  $a f(x_1) = \omega(h_1,x_1) = 1 = \omega(x_2,x_3) = b f(x_1)$, hence $a=b$. We may finally adjust the metric to obtain the standard structure equations of $\h_3$ extended by $h_1$.
\end{proof}

\begin{rem}\label{rem: generalise}
{\rm 
It is not hard to generalise the family  $\d_a$ of Theorem \ref{normal 4} to produce normal $j$-algebras with one-dimensional Abelian part: for $n\geq 1$ and any real number $a>0$, let $\d_a^n$ be the $(2n+2)$-dimensional Lie algebra $\d_a^n=\a \oplus \n$, where 
\[\a=\R h_1, \quad \n=\text{span}\{x_1,u_1,\ldots,  u_{2n}\},\]
and the Lie bracket is 
 \begin{equation*}
      [h_1,x_1]=ax_1,\quad 
     [h_1,u_j]=\frac{1}{2}au_j, \quad 1\leq j\leq 2n,\qquad
     [u_{2i-1},u_{2i}]=ax_1, \quad 1 \leq i \leq n.
\end{equation*}
By declaring the basis $\{h_1,x_1,u_1,\ldots, u_{2n}\}$ orthonormal, and then setting $Jx_1=h_1$ and $Ju_{2i}=u_{2i-1}$ for $1\leq i\leq n$, this Lie algebra becomes a normal $j$-algebra. It is almost nilpotent, with nilradical isomorphic to the Heisenberg algebra $\h_{2n+1}$. 
For $n=1$ we recover the algebra $\d_a$ of the previous theorem. \par
\quad Furthermore, it is quite easy to check that any normal $j$-algebra with $1$-dimensional Abelian part is equivalent to one of our $\d_a^n$.}
\end{rem}

\bigbreak

\begin{prop} \label{ f se anula}
    Let  $(\d,J,f)$ be a  normal $j$-algebra. If $\alpha$ is a non-distinguished root then $f(\n_{\alpha})=\{0\}$, whereas if $\alpha=\epsilon_i$ is distinguished $f(\n_{\epsilon_i}) \neq \{0\}$ for all $i$.
 \end{prop}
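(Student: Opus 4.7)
The plan rests on the identity $\omega = -\D f$ combined with the orthogonal decomposition $\d = \n \oplus \a$ and Piatetskii-Shapiro's Theorem \ref{prop raices}. For $x \in \n_\alpha$ and any $h \in \a$, the relation $[h,x] = \alpha(h)\, x$ together with $\omega = -\D f$ gives
\[ \alpha(h)\, f(x) \;=\; f[h,x] \;=\; -(\D f)(h,x) \;=\; \omega(h,x). \]
On the other hand the Hermitian convention $\omega(y,z) = \langle y, Jz\rangle$ — which follows from $\langle y, Jz\rangle = f[Jy, Jz] = f[y,z]$ by the $j$-algebra compatibility $f[Jy,Jz] = f[y,z]$ — tells us that $\omega(h,x) = \langle h, Jx\rangle$. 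This is the single pairing I would set up before splitting into cases.

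For a non-distinguished root $\alpha$, I would invoke Theorem \ref{prop raices} to note that $J$ sends $\n_\alpha$ into $\n$ (either stabilising $\n_{\epsilon_k/2}$ or swapping $\n_{(\epsilon_i+\epsilon_j)/2}$ with $\n_{(\epsilon_i-\epsilon_j)/2}$). Hence $Jx \in \n = \a^\perp$, which forces $\langle h, Jx\rangle = 0$ for every $h \in \a$. Since $\alpha$ is a non-zero linear functional on $\a$, I pick $h_0 \in \a$ with $\alpha(h_0) \neq 0$; the displayed chain of equalities then yields $f(x) = 0$. As $x \in \n_\alpha$ was arbitrary, $f(\n_\alpha) = \{0\}$.

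For the distinguished case $\alpha = \epsilon_i$, the statement is essentially already contained in \eqref{base ortonormal}: after the rescaling of $x_i \in \n_{\epsilon_i} = \R x_i$ leading to \eqref{f(x_k)<0}, one has $\|x_i\|^2 = \epsilon_i(Jx_i)\, f(x_i)$ with both factors strictly positive, so $f(x_i) \neq 0$ and $f$ is non-trivial on $\R x_i = \n_{\epsilon_i}$.

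I do not foresee a real obstacle: the whole argument collapses to a one-line computation once the pairing $\alpha(h) f(x) = \langle h, Jx\rangle$ is isolated. The only point requiring care is keeping track of the sign and Hermitian conventions tying $\omega$, $\pint$, $J$ and $f$ together, which the normal $j$-algebra axioms make perfectly compatible, together with the crucial feature distinguishing $\alpha = \epsilon_i$ from the other roots: only for distinguished roots does $J$ send $\n_\alpha$ into $\a$, breaking the orthogonality that annihilates $\langle h, Jx\rangle$ in the non-distinguished case.
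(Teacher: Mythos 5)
Your proof is correct, and while it rests on the same basic computation as the paper's --- evaluating $f$ on a bracket $[h,x]=\alpha(h)x$ and rewriting $f[h,x]=-\D f(h,x)=\omega(h,x)$ as an inner product involving $J$ --- your execution is cleaner and more uniform. The paper splits the non-distinguished roots into two sub-cases: for $\alpha=\tfrac12(\epsilon_i\pm\epsilon_j)$ it simply cites D'Atri, and for $\alpha=\tfrac12\epsilon_i$ it pairs against the specific element $Jx_i$ and concludes via $\langle x_i,y\rangle=0$, which relies on D'Atri's pairwise orthogonality of root spaces. You instead pair against an arbitrary $h\in\a$ and observe that $\omega(h,x)=\langle h,Jx\rangle$ vanishes because $J$ stabilises the sum of non-distinguished root spaces (Theorem \ref{prop raices}), so $Jx\in\n=\a^{\perp}$; this handles both sub-cases at once and needs only the defining orthogonality $\a\perp\n$ rather than the finer orthogonality of distinct root spaces. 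The one point you gloss over is why $\alpha$ is non-zero on $\a$: this follows from the linear independence of the distinguished roots, or more concretely by taking $h_0=Jx_i$ and invoking \eqref{anular raices}, exactly as the paper does. The distinguished case is handled identically in both arguments via \eqref{base ortonormal} and \eqref{f(x_k)<0}.
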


\begin{proof}
Let $\epsilon_1,\ldots,  \epsilon_r$ denote the distinguished roots. A non-distinguished $\alpha$ must be $\tfrac{1}{2}\epsilon_{i}$ or $\tfrac{1}{2}(\epsilon_{i}\pm \epsilon_{j})$. In the latter case $f(\n_{\alpha})=0$ \cite{D}.  
If $\alpha= \tfrac{1}{2}\epsilon_{i}$ let us take an element $y \in \n_{\epsilon_i/2}$. If $\n_{\epsilon_i} =\R x_i$ then $[Jx_i, y]=\tfrac{1}{2}\epsilon_{i}(Jx_i)y$. On the other hand, 
\[ \tfrac{1}{2}\epsilon(Jx_i)f(y)= f\big([Jx_i, y]\big)=-df(Jx_i, y)=\omega(Jx_i, y)=\langle Jx_i, Jy \rangle=\langle x_i ,y \rangle=0  \]
since $x_i, y$ belong to different root spaces. Hence 
$f(y)=0$ due to \eqref{anular raices}, implying $f(\n_{\epsilon_i/2})=0$. Finally, for $\n_{\epsilon_i}=\R x_i$ it follows from \eqref{f(x_k)<0} that $f(x_i)\neq 0$.
\end{proof}

\vspace{.5cm}

\subsection{Einstein normal j-algebras}

The Einstein equation for normal $j$-algebras $(\d, J, f)$ is encoded into the condition, proved in \cite[Section 1]{Dotti}, that there exists a constant $C>0$ such that: 
$$
    C=\frac{\epsilon_{k}(Jx_k)}{f(x_k)}\left(1+\frac{1}{4}n_{k}+\frac{1}{2}\sum_{j<k}n_{jk}+\frac{1}{2}\sum_{j>k}n_{kj}\right)\qquad      \text{ for all }\ 1\leq k \leq r,
$$
 where $r=\dim\a$ and the numbers $n_{k},\ n_{ij}$ are as in  \eqref{eq: dimensiones}. If we choose the  $x_k$ to be unit vectors, equation \eqref{base ortonormal} forces $\epsilon_{k}(Jx_k)=\frac{1}{f(x_k)}$, and the previous equation becomes the slightly simpler, but much more practical condition
\begin{equation}\label{einstein en base ortonormal}
    f(x_k)^{2}=C^{-1} \left(1+\frac{1}{4}n_{k}+\frac{1}{2}\sum_{j<k}n_{jk}+\frac{1}{2}\sum_{j>k}n_{kj}\right), \qquad k=1,\ldots,  r.
\end{equation}

\smallskip

\begin{coro} \label{normal siempre einstein} 
A normal $j$-algebra $\d=\a\oplus\n$ with $\dim \a=1$ is Einstein.
\end{coro}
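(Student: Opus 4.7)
The plan is to observe that the Einstein system \eqref{einstein en base ortonormal} degenerates to a single equation when $r=\dim\a=1$, and that equation is trivially solvable.

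Explicitly, under the hypothesis $\dim\a=1$ there is only one distinguished root $\epsilon_1$, hence $r=1$ and $k$ can only take the value $1$. The two summations over $j<k$ and $j>k$ in \eqref{einstein en base ortonormal} are empty, so the Einstein condition reduces to the single scalar relation
\begin{equation*}
f(x_1)^{2}=C^{-1}\left(1+\tfrac{1}{4}n_{1}\right).
\end{equation*}

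By Proposition \ref{ f se anula} (or equivalently \eqref{f(x_k)<0}) we know $f(x_1)\neq 0$, while $1+\tfrac{1}{4}n_1>0$ because $n_1\geq 0$. Therefore the positive constant
\begin{equation*}
C:=\frac{1+\tfrac{1}{4}n_{1}}{f(x_1)^{2}}
\end{equation*}
fulfils the required equation, showing that $\d$ is Einstein.

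No real obstacle arises here: the statement is essentially a sanity check saying that when there is a single distinguished root the Einstein system consists of a single equation and is automatically consistent. The structural work has already been carried out in the description of $\d_a^n$ of Remark \ref{rem: generalise} and in the derivation of formula \eqref{einstein en base ortonormal}. One could additionally remark, for emphasis, that the family $\d_a^n$ classifies such algebras up to isomorphism, so the corollary says that every member of this one-parameter family admits an Einstein metric in the normal $j$-algebra sense.
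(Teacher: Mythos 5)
Your proof is correct and follows essentially the same route as the paper's: with a single distinguished root the D'Atri--Dotti Einstein system collapses to one scalar equation, which merely defines the automatically positive constant $C$. The paper phrases this via the unnormalised form $C=\frac{\epsilon_{1}(Jx_1)}{f(x_1)}\bigl(1+\frac{1}{4}n_1\bigr)$ and separately notes the trivial case $\Delta'=\emptyset$ (where $\d=\aff(\R)$), but the content is identical.
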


\begin{proof}
There is a unique distinguished root  $\epsilon$. If $\Delta'$ is empty  then $\d=\aff(\R)$, which is Einstein. 
If there do exist non-distinguished roots, by Theorem \ref{prop raices} there is actually only one, namely $\tfrac{1}{2}\epsilon$. In the above formula 
we then have  $r=1$, $n_{jk}=n_{kj}=0$ and $\epsilon_1(Jx_1)>0, \, f(x_1)>0$ by \eqref{f(x_k)<0}, and the Einstein condition reduces to the quantity $\frac{\epsilon_{1}(Jx_1)}{f(x_1)}(1+\frac{1}{4}n_1)$ being positive.
\end{proof}

\smallskip

\begin{rem}\label{rem: einstein dim=1}
{\rm 
A consequence of Corollary \ref{normal siempre einstein} is that the normal $j$-algebras $\d_a^n$ of Remark \ref{rem: generalise} are Einstein.}
\end{rem}
\medbreak

As a further example/application let us establish the following.
\begin{prop} The only Einstein normal $j$-algebra of dimension $2n$ with $\dim \a=\dim \n$ is the $n$-fold product $\aff(\R)\times \cdots \times \aff(\R)$.
\end{prop}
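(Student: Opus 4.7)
My plan is to let the dimensional hypothesis $\dim \a = \dim \n$ do the heavy lifting: it will force the root system to collapse to only the distinguished roots, after which the product decomposition essentially writes itself. The Einstein assumption then plays a merely cosmetic role of fixing the metric scale.

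Set $r := \dim \a = \dim \n$. By Theorem \ref{prop raices} together with the one-dimensionality of each distinguished root space \cite{D}, the $r$ spaces $\n_{\epsilon_1}, \ldots, \n_{\epsilon_r}$ already account for $r = \dim \n$ dimensions. Consequently there is no room for non-distinguished roots, so $\Delta' = \emptyset$ and every $n_k$ and $n_{ij}$ in \eqref{eq: dimensiones} vanishes. Writing $\n = \bigoplus_{i=1}^r \R x_i$ with $\n_{\epsilon_i} = \R x_i$, the inclusion $[\n_{\epsilon_i},\n_{\epsilon_j}] \subseteq \n_{\epsilon_i+\epsilon_j}$ together with the fact that $\epsilon_i + \epsilon_j$ is not a root immediately gives that $\n$ is abelian.

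Taking $h_i := Jx_i$ as an orthonormal basis of $\a$, equation \eqref{anular raices} produces $[h_i,x_j] = \epsilon_j(h_i) x_j = 0$ for $i \neq j$, while $[h_i,x_i] = \epsilon_i(h_i) x_i$ with $\epsilon_i(h_i) > 0$ by \eqref{f(x_k)<0}. Hence $\d$ splits as a Lie-algebra direct product
\[ \d \iso \aff(\R) \times \cdots \times \aff(\R) \quad (r = n \text{ factors}), \]
each ideal $\op{span}\{h_i, x_i\}$ being a copy of $\aff(\R)$ after rescaling. Finally, the Einstein equation \eqref{einstein en base ortonormal} reduces under $n_k = n_{ij} = 0$ to the single relation $f(x_k)^2 = C^{-1}$ independent of $k$, which is compatible with the product structure and just pins down the metric up to a global homothety on each factor. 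I foresee no serious obstacle: the rigid collapse of the root system at the very first step renders the rest automatic.
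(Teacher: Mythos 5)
Your proof is correct and follows essentially the same route as the paper's: the dimension count eliminates all non-distinguished roots, the brackets collapse to $[h_i,x_i]=\epsilon_i(h_i)x_i$ with $\n$ Abelian, and the Einstein criterion \eqref{einstein en base ortonormal} reduces to $f(x_1)^2=\cdots=f(x_n)^2=C^{-1}$. One caveat on framing only: the Einstein hypothesis is not purely cosmetic, since with unit generators $\epsilon_k(h_k)=1/f(x_k)$ by \eqref{base ortonormal}, so the relation you derive is exactly what forces all the constants $a_k=\epsilon_k(h_k)$ to be \emph{equal} (a single global rescaling, not an independent homothety on each factor), and this is what singles out the standard product inside the family $\d_{a_1,\ldots,a_n}$ of mutually isomorphic but metrically distinct normal $j$-algebras.
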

\begin{proof}
By Theorem \ref{prop raices} all roots $\{\epsilon_{1},\ldots, \epsilon_{n}\}$ are distinguished, and if each root space $\n_{\epsilon_i}$ is generated by the unit vector $x_i$, then $\{x_1, \dots , x_n \}$ is an orthonormal basis of $\n$, while $\{h_1,\ldots, h_n\}$ is an orthonormal basis of $\a$, where $h_i=Jx_i$. 
Then the bracket will be 
\[[h_i, x_i]=a_i x_i, \quad 1\leq i \leq n,\]
with all $a_i = \epsilon_{i}(h_i)\neq 0$. 
At the same time $\D x^{i}=-a_i h^{i}\wedge x^{i}$ says that the K{\"a}hler form 
\[ \omega :=h^{1}\wedge x^{1}+\cdots+h^{n}\wedge x^{n} = 
-\D \bigg( \sum_i \frac{1}{a_i}x^i\bigg)\]
is exact. D'Atri's theorem  ensures there is an Einstein metric precisely if  $\frac{a_k}{f(x_k)}=a_{k}^2=C>0$ for every $k$, that is, for $a_{1}=\cdots=a_{n}>0$. By rescaling, all these numbers can be taken to be $1$. 
\end{proof}

The next example provides another family of Einstein normal $j$-algebras.

\begin{example} 
{\rm    Let $(e_i)$ be an orthonormal basis of 
    $$\d=\a \oplus \n = \text{span}\{e_1,e_2\} \oplus \text{span}\{e_3, \ldots,  e_8\}.$$ 
We have two distinguished roots $\epsilon_1, \epsilon_2$, and let us suppose  the root spaces are 
$$\n_{\epsilon_1}=\R e_3,\quad \n_{\epsilon_2}=\R e_4,\quad 
\n_{(\epsilon_{1}-\epsilon_{2})/2}=\text{span}\{e_5,e_6\}, \quad
\n_{(\epsilon_{1}+\epsilon_{2})/2}=\text{span}\{e_7,e_8\},$$ 
with $Je_3=e_{1}, Je_{4}=e_2, Je_5=e_7, Je_6=e_8$. 
Equation \eqref{anular raices} tells that $\epsilon_{1}=a e^{1}, \epsilon_{2}=be^{2}$ with $a,b \neq 0$, and Theorem \ref{prop raices} imposes the following: 
$$\begin{array}{c}
[e_1,e_3]=ae_3 \quad
[e_1,e_5]=\tfrac{1}{2}ae_5 \quad
 [e_1,e_6]=\tfrac{1}{2}ae_6 \quad
 [e_1,e_7]=\tfrac{1}{2}ae_7 \quad
 [e_1,e_8]=\tfrac{1}{2}ae_8\\[2mm]
 {}[e_2,e_4]=be_4 \quad 
 [e_2,e_5]=-\tfrac{1}{2}be_5 \quad
 [e_2,e_6]=-\tfrac{1}{2}be_6 \quad
 [e_2,e_7]=\tfrac{1}{2}be_7 \quad
 [e_2,e_8]=\tfrac{1}{2}be_8 \\[2mm]
{}[e_5,e_7]=ce_3 \quad 
 [e_5,e_8]=ge_3 \quad
 [e_6,e_7]=he_3 \quad
 [e_6,e_8]=ke_3.
\end{array}$$
for some coefficients to be determined. Applying $f$ to $[e_5,e_8]=ge_3$ gives $g f(e_3)= f[e_5,e_8]= \omega(e_5,e_8)=\langle e_5, -e_6 \rangle=0$, so by \eqref{f(x_k)<0} we obtain $g=0$. 
By completely similar computations one finds $h=0$ and $c=k=-a$, so eventually $\d$ has equations
\begin{equation*}
\big(0, 0, a(-e^{13}+e^{57}+e^{68}), -be^{24}, 
\tfrac{1}{2}(be^{2}-ae^{1})e^5, 
\tfrac{1}{2}(be^{2}-ae^{1})e^6, 
\tfrac{1}{2}(be^{2}-ae^{1})e^7, 
-\tfrac{1}{2}(ae^{1}+be^{2})e^8\big).
\end{equation*}
The K{\"a}hler form $\omega=e^{13}-e^{42}-e^{57}-e^{68}$ 
can be written as $\omega=-\D f$ with
$f=\frac{1}{a}e^{3}+\frac{1}{b}e^{4}$. \par
\quad All in all, there is a $2$-parameter family of normal $j$-algebras $\d$ 
with $r=2, n_{1}=n_{2}=0,  n_{12}=2 $.
If we take $x_1=e_3, x_2=e_4$ in equation \eqref{einstein en base ortonormal}, then $\d$ is Einstein if and only if  $f(e_3)^2=f(e_4)^2$. This happens for  $a=\pm b$, so we end up with two curves of Einstein normal $j$-algebras within the big family:
\begin{equation*}\begin{array}{c}
\a\oplus \big(a(e^{57}+e^{68}-e^{13}), -ae^{24}, 
\tfrac{a}{2}(e^{25}-e^{15}), 
\tfrac{a}{2}(e^{26}-e^{16}), 
\tfrac{a}{2}(e^{27}-e^{17}), 
-\tfrac{a}{2}(e^{18}+e^{28})\big), \\[1mm]
\a\oplus \big(a(e^{57}+e^{68}-e^{13}), ae^{24}, 
-\tfrac{a}{2}(e^{25}+e^{15}), 
-\tfrac{a}{2}(e^{26}+e^{16}), 
-\tfrac{a}{2}(e^{27}+e^{17}), 
-\tfrac{a}{2}(e^{18}-e^{28})\big).
\end{array}\end{equation*}}
\end{example}

\bigbreak
\bigbreak

Next we show how to produce concrete examples of $\eta$-Einstein Sasakian Lie algebras beginning with certain normal $j$-algebras. We retain the notation of  \S \ref{sec:centreless} and Remark \ref{rem: generalise}. 

\begin{thm}\label{thm: examples with H0 no nulo}
Let $\k$ be a K{\"a}hler-Einstein Lie algebra with exact K{\"a}hler form and 
Einstein constant $c<0$, and $\d_a^n$ a $(2n+2)$-dimensional normal $j$-algebra as in Remark \ref{rem: generalise}, for some $n\in\N$, $a>0$. Set $\h_1 = \d_a^n\times\k$. If $H_0=-ax_1$ (and $k\neq 0$ is arbitrary), the associated centreless Sasakian Lie algebra $\g$, constructed using Proposition \ref{prop: compatibilidad}, is $\eta$-Einstein if and only if \[c^{2}=\frac12 a^2(n+3).\]
This structure is negative $\eta$-Einstein with constant $\lambda=-c^2-2$.
\end{thm}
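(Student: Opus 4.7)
The plan is to verify the hypotheses of Proposition \ref{h1 crea sasaki sin centro}, then compute the Einstein scalar $\lambda$ forced by the Reeb-transversal equation on $\q$, and finally reduce the $\eta$-Einstein condition on $\h_1$ to one automatic equation on $\d_a^n$ and one genuine equation on $\k$.

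I would start by verifying the compatibility assumptions. The product $\h_1=\d_a^n\times\k$ is K\"ahler-exact because each factor is. The vector $H_0=-ax_1=-[h_1,x_1]$ lies in $\theta(\h_1,\h_1)$, while $JH_0=-ah_1$ sits in the Abelian summand $\R h_1\subset\d_a^n$ and is thus orthogonal to $[\d_a^n,\d_a^n]\oplus[\k,\k]=\theta(\h_1,\h_1)$. Finally, the closed form $\mu(x)=-\tfrac12\langle x,JH_0\rangle=\tfrac{a}{2}\langle x,h_1\rangle$ satisfies $\theta(x,H_0)=2\mu(x)H_0$: both sides vanish on $x_1$, on the $u_j$, and on the whole $\k$-factor, while both equal $-a^2 x_1$ when $x=h_1$.

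Next I compute $\lambda$ from \eqref{lambda para eistein}. Since $JH_0=-ah_1$ kills $\k$ and acts on $\d_a^n$ diagonally with eigenvalues $-a^2$ on $x_1$ and $-a^2/2$ on each of the $2n$ vectors $u_j$, one has $\tr\ad^{\h_1}_{JH_0}=-a^2(n+1)$. Together with $\|H_0\|^2=a^2$ this gives
\begin{equation*}
\lambda=\tfrac12\tr\ad^{\h_1}_{JH_0}-\|H_0\|^2-2=-\tfrac{a^2(n+3)}{2}-2.
\end{equation*}
Setting $c^2:=\tfrac12 a^2(n+3)$ recovers the stated $\lambda=-c^2-2$, with $\lambda<-2$ confirming the negative character.

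The heart of the argument is then the equation $\ric(x,y)=\lambda\langle x,y\rangle$ for $x,y\in\h_1$; by \eqref{Ric en h1} this reads
\begin{equation*}
\ric^{\h_1}(x,y)+\tfrac12\langle\theta(Jx,y),H_0\rangle=-c^2\langle x,y\rangle.
\end{equation*}
As $H_0\in\d_a^n$ and $\h_1$ is a metric direct product, this decouples. On the $\k$-factor the correction drops out, leaving $\ric^\k=-c^2\langle\cdot,\cdot\rangle$, i.e.\ the Einstein condition on $\k$ with Einstein constant determined by $c^2=\tfrac12 a^2(n+3)$, which is precisely the iff clause of the theorem. On the $\d_a^n$-factor I would evaluate $\tfrac12\langle\theta(Je_i,e_j),H_0\rangle$ on the orthonormal basis $\{h_1,x_1,u_1,\ldots,u_{2n}\}$; the explicit brackets of $\d_a^n$ and the choice $H_0=-ax_1$ make every diagonal entry equal $-a^2/2$ and all off-diagonal entries vanish. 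The residual equation $\ric^{\d_a^n}=-\tfrac{a^2(n+2)}{2}\,\Id$ is precisely the Einstein property of the normal $j$-algebra $\d_a^n$ (Corollary \ref{normal siempre einstein}), with D'Atri constant $C=\tfrac12 a^2(n+2)$ read off from \eqref{einstein en base ortonormal} using $r=1$, $n_1=2n$ and $f(x_1)=1/a$. Hence the $\d_a^n$ part is automatic. The main technical obstacle is precisely this diagonal-plus-scalar verification on $\d_a^n$: showing the correction tensor is a pure multiple of $\Id$ with coefficient exactly matching the shift between the Einstein constants $-\tfrac12 a^2(n+2)$ and $-c^2$ is what makes the $\d_a^n$-equation collapse into an identity, leaving the single condition on $\k$.
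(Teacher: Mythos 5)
Your proof is correct and follows essentially the same route as the paper: both rest on formula \eqref{Ric en h1} together with \eqref{Ric en u,u}/\eqref{lambda para eistein}, the block-diagonal structure of $\ric^{\h_1}$ on $\d_a^n\times\k$, and the Einstein property of $\d_a^n$, reducing everything to matching $-c^2-2$ against $-\tfrac12 a^2(n+3)-2$ on the $\k$-factor while the $\d_a^n$- and $\q$-blocks agree automatically. Your explicit verification of the compatibility conditions \eqref{eq: JH0}--\eqref{eq: importante} for $H_0=-ax_1$ is a welcome addition that the paper leaves implicit, and you correctly adopt the convention $\ric^{\k}=-c^2\,\pint$ that the paper's own computation uses (despite the statement's wording ``Einstein constant $c$'').
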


\begin{proof}
Corollary \ref{normal siempre einstein} and Remark \ref{rem: einstein dim=1} guarantee that the K{\"a}hler metric on $\d_a^n$ is Einstein. 
Moreover, it is straightforward to check that the Ricci curvature equals 
\begin{equation}\label{Ricci en normal con 1 raiz}
    \ric^{\d_a^n}= -a^2\left(\frac{n}{2}+1\right) \pint_{\d_a^n},
\end{equation}
and so $\ric^{\h_1}$ has block-diagonal form 
$\ric^{\d_a^n} \oplus\, c\Id_{2n}$ in terms of the orthogonal sum $\h_1=\d_a^n\oplus \k$. Therefore, equation \eqref{Ric en h1} gives 
\[ \ric (h_1,h_1)=-a^{2}-\frac{1}{2}a^{2}n-2+\frac{1}{2}\langle \theta(Jh_1,h_1), -ax_1\rangle=-\frac{1}{2}a^{2}(n+3)-2\]
and $\ric (x_1,x_1)=\ric (h_1,h_1)$, as $\ric(\varphi x,\varphi y)=\ric (x,y)$ whenever $x,y\perp \xi$. A direct computation gives the same value for $\ric (u_i, u_i)$, $1\leq i \leq 2n$,  
and also for $\ric(u,u)=\ric (v, v)=-a^{2}-2+\frac{1}{2}-a\tr\ad_{h_1}$ 
with $u,v\in {\q}$.  Taking a unit vector $x\in \k$, on the other hand, produces
\[ 
\ric (x, x)=-c^{2}-2+\frac{1}{2}\langle \theta(Jx, x), -ax_1 \rangle=-c^{2}-2.
\]
So, for $\g$ to be $\eta$-Einstein the numbers must match, which happens precisely when $2c^{2}=a^{2}(n+3)$. At last, $-c^2-2<-2$ says the $\eta$-Einstein structure is negative.
\end{proof}

\bigbreak 

\section{Modifications}

One of the core lessons we have learnt thus far is that a good chunk of the theory of $\eta$-Einstein Sasakian algebras boils down, so to speak, to the understanding of K{\"a}hler-exact solvable algebras. This subject has of course its own interest, irrespective of our Sasakian construction on top of it. 

In this part we shall review, and elaborate on, the notion of modified Hermitian algebra as presented in \cite{Dorf, Dalek-Keizo-Kamishima}. 
Modifications, besides being a very flexible and practical algebraic tool, are a core aspect of the Gindikin--Vinberg conjecture on homogeneous K\"ahler manifolds \cite{Dorfmeister-Nakajima}.

\begin{definition} \label{def modi}
Let $\big(\h,[\cdot,\cdot ], J, \pint\big)$ be a Hermitian Lie algebra. A linear map 
 $\phi: \h \to \op{Der}_{u}( \h)$ satisfying 
$$[\phi(x), \phi(y)]=0, \qquad \phi [x,y]=0, \qquad \phi\big(\phi(x)y\big)=0$$
for all $x,y \in \h$ is  is called {\bf modification map}.
\end{definition}
Purely for notational purposes we shall `curry' a modification $\phi$ and understand it as a family of unitary derivations $\big\{\phi_x \in\op{Der}_u(\h)\colon \phi_x(y):=\phi(x)y,\ x, y\in \h\big\}$ parametrised by $\h$ (more prosaically, we just enforce the exponential law).
\smallbreak

 Define a modified bracket in the following way:
\[[x,y]_{\phi}:=[x,y]+\phi_xy-\phi_y x.\]
This is indeed a Lie bracket \cite[Proposition 5]{Cortes}, and we shall indicate the new structure by $\h_{\phi}:=(\h,[ \cdot , \cdot]_{\phi})$, and write $\D^\phi$ for its Chevalley-Eilenberg differential.
\medbreak
This notion starts to be significant from dimension four upwards, since a modification of a 2-dimensional Lie algebra is trivial. Indeed, for $\R^2$, take $e_1, e_2$ orthonormal  and let $\phi(v)=
    \begin{spmatrix} 0 & -g(v)\\ g(v) & 0  \end{spmatrix}$ 
be a modification, for some linear map $g$. Then $0= \phi(\phi(e_1)(e_1))=\phi(g(e_1)e_2)=g(e_1)\phi(e_2)$, and similarly $g(e_2)=0$. The proof for the non Abelian $2$-dimensional algebra, $\aff(\R)$, follows immediately from the definition of modification map.
     
\vspace{.5cm}

Henceforth we shall be concerned with solvable K{\"a}hler algebras $\s$. Dorfmeister showed that any modification $\s_{\phi}$ stays solvable K{\"a}hler. 
He actually proved the following theorem:
\begin{prop}[\cite{Dorf}]\label{dorf} 
    Every solvable K{\"a}hler Lie algebra $(\s, J)$ is a modification of a completely solvable semi-direct product of a normal $j$-algebra times an Abelian ideal, both of which are $J$-invariant.
\end{prop}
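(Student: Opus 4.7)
The plan is to peel off from $(\s, J, \pint)$ a family of unitary derivations that captures precisely the obstruction to $\s$ being completely solvable, and then to show that the bracket corrected by this family fits the desired template.

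For each $x\in\s$, I would invoke the Jordan decomposition inside $\End(\s)$,
\[
\ad_x = S_x + N_x,
\]
with $S_x$ semisimple, $N_x$ nilpotent, and $[S_x,N_x]=0$; both are derivations of $\s$, being polynomials in $\ad_x$. Next, decompose the semisimple part orthogonally with respect to the K\"ahler inner product,
\[
S_x = \sigma_x + \phi_x,
\]
where $\sigma_x$ is self-adjoint (real spectrum) and $\phi_x$ is skew-adjoint (purely imaginary spectrum). Since $\sigma_x$ and $\phi_x$ are once again polynomials in $S_x$, they remain derivations. The parallelism $\nabla J=0$ should force $J$ to commute with $S_x$, and hence with its symmetric/skew parts, placing $\phi_x \in \op{Der}_u(\s)$. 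The upshot is that $\phi_x$ isolates exactly the spectrum of $\ad_x$ responsible for the failure of complete solvability.

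The crucial step is to verify that $x \mapsto \phi_x$ satisfies the three axioms of Definition \ref{def modi}. The relation $[\phi_x,\phi_y]=0$ should follow from the fact that the skew semisimple derivations produced this way arise from a common maximal toral subspace of $\op{Der}_u(\s)$ and therefore mutually commute. The relation $\phi_{[x,y]}=0$ reflects that $\ad_{[x,y]}=[\ad_x,\ad_y]$ has semisimple part lying in an Abelian subalgebra of derivations, but on $[\s,\s]$ the skew component is forced to vanish by an elementary spectral argument. The third relation $\phi_{\phi_xy}=0$ follows because $\phi_xy$ lies in a weight space of $\phi_x$ on which the induced semisimple-skew decomposition degenerates. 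Granting these axioms, the modified bracket satisfies
\[
\ad^{\phi}_x = \ad_x - \phi_x = \sigma_x + N_x,
\]
so every $\ad^{\phi}_x$ has only real eigenvalues, making $\s_\phi$ completely solvable. Moreover, since each $\phi_x$ is unitary, the pair $(J,\pint)$ remains compatible with the modified bracket, so $\s_\phi$ is again a K\"ahler Lie algebra.

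Finally, to realise $\s_\phi$ as a $J$-invariant semi-direct product of a normal $j$-algebra by an Abelian ideal, I would analyse the restriction of the K\"ahler form $\omega$ to the nilradical of $\s_\phi$: the maximal $J$-invariant ideal on which $\omega$ degenerates furnishes the Abelian factor, while a complementary $J$-invariant subalgebra on which $\omega$ is non-degenerate (and exact) becomes a normal $j$-algebra by the root-space description of Theorem \ref{prop raices}. The hardest part of this programme is undoubtedly the verification of the modification axioms, and particularly $\phi_{[x,y]}=0$, which requires a careful algebraic argument exploiting the uniqueness of the Jordan decomposition together with the metric structure; the $J$-equivariance of $\phi_x$ is the other delicate technical point, since it uses the full K\"ahler hypothesis rather than the mere existence of a compatible Hermitian metric.
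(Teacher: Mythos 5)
First, a point of order: the paper does not prove this statement at all --- it is quoted verbatim from Dorfmeister \cite{Dorf}, whose argument is a long structural analysis of homogeneous K\"ahler manifolds with transitive solvable automorphism group. So your proposal is an attempt to reprove a deep imported theorem from scratch, and unfortunately it has several genuine gaps. The most basic one is that the map $x\mapsto \phi_x$ you construct via the pointwise Jordan decomposition of $\ad_x$ is not linear in $x$: the Jordan--Chevalley decomposition is not additive ($S_{x+y}\neq S_x+S_y$ in general), so $\phi$ is not even a candidate for a modification map as in Definition \ref{def modi}. Fixing this requires choosing a nilpotent complement (or Cartan subalgebra) once and for all and defining $\phi$ on it, which is where the real work of the nilshadow-type construction lies. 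Second, you conflate two different splittings of $S_x$: the decomposition into real-spectrum and purely-imaginary-spectrum parts \emph{is} given by polynomials in $S_x$ and hence consists of derivations, but the decomposition into self-adjoint and skew-adjoint parts with respect to $\pint$ is $\tfrac12(S_x\pm S_x^{*})$, which is \emph{not} polynomial in $S_x$ and need not consist of derivations. These two splittings coincide only when $S_x$ is normal with respect to the K\"ahler metric, which is precisely what would have to be proved; as it stands, the assertion that $\phi_x\in\op{Der}_u(\s)$ is unsupported. Third, the identity $\ad^{\phi}_x=\ad_x-\phi_x$ is false in general, since $[x,y]_{\phi}=[x,y]+\phi_x y-\phi_y x$ carries the extra term $-\phi_y x$; without knowing that $\phi$ vanishes on the relevant subspaces you cannot conclude that the modified adjoint operators have real spectrum.

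Beyond these, the verification of the modification axioms is asserted rather than proved: the commutativity $[\phi_x,\phi_y]=0$ does not follow from the operators ``arising from a common maximal toral subspace'' unless you have actually arranged them to lie in one, and $\phi_{\phi_x y}=0$ is not justified. (The axiom $\phi_{[x,y]}=0$ would indeed follow from Lie's theorem, since $\ad_{[x,y]}$ is nilpotent for solvable $\s$ --- that step is fine \emph{if} $\phi$ were well defined.) Finally, even granting that $\s_{\phi}$ is completely solvable K\"ahler, the decomposition of such an algebra as a $J$-invariant semi-direct product of a normal $j$-algebra by an Abelian ideal is itself a substantial structure theorem (the K\"ahler form is non-degenerate on all of $\s$, so ``the maximal $J$-invariant ideal on which $\omega$ degenerates'' does not parse as stated; the relevant dichotomy is between the exact and non-exact parts of $\omega$, and isolating the Abelian factor from the failure of exactness is exactly the content of the Gindikin--Piatetskii-Shapiro--Vinberg/Dorfmeister theory). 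In short, the proposal identifies the correct heuristic --- a modification should absorb the skew-semisimple parts of the adjoint operators --- but each of the key steps either fails as stated or hides the entire difficulty of the cited theorem.
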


Because of this result, our focus can shift to modified normal $j$-algebras.  Specifically in dimension four, for instance, we have:

\begin{prop} \label{clasi dim 4}
    A $4$-dimensional, modified normal $j$-algebra $\s_{\phi}$ is either isomorphic to
\begin{itemize}
    \item[i)] one of the $\d_{a,b}$ in Theorem \ref{normal 4}, or 
\item[ii)] a modification of 
one of the $\d_{a}$ in Theorem \ref{normal 4}, with structure equations  
$$[h_1,x_1]= a x_1,\quad
    [h_1, x_2]=\frac{1}{2}ax_2+ cx_3,\quad
    [h_1, x_3]=\frac{1}{2}ax_3 - cx_2,\quad
    [x_2, x_3]=x_1,
$$ 
where the additional parameter $c\in\R$ accounts for the modification.
\end{itemize}    
\end{prop}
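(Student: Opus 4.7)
My plan is to combine the classification from Theorem \ref{normal 4} with the structural results about $\op{Der}_u$ established in Lemmas \ref{Da=0} through \ref{D suma}. Since $\s_\phi$ by definition starts with a normal $j$-algebra $\s$ of dimension $4$, the first reduction is that $\s$ must be isomorphic to one of $\d_{a,b}$ or $\d_a$, so I would proceed by treating these two cases separately.

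For case $\s = \d_{a,b}$, I would note that $\d_{a,b} = \a \oplus \R x_1 \oplus \R x_2$ where $\a$ is Abelian and both root spaces $\R x_1, \R x_2$ are distinguished. By Lemma \ref{Da=0}, every unitary derivation annihilates $\a$, and by Lemma \ref{D cero} it annihilates each $\n_{\epsilon_i}$. Together these cover the whole of $\d_{a,b}$, so $\op{Der}_u(\d_{a,b}) = \{0\}$, every modification is trivial, and $\s_\phi \cong \d_{a,b}$, which is case i).

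For case $\s = \d_a$, I would combine Lemmas \ref{Da=0}, \ref{D cero} and \ref{D invariante}: any $D \in \op{Der}_u(\d_a)$ kills $h_1$ and $x_1$ and preserves the non-distinguished root space $\n_{\epsilon/2} = \text{span}\{x_2, x_3\}$. Skew-symmetry and $J$-linearity (with $Jx_3 = x_2$) force $D|_{\n_{\epsilon/2}}$ to be a multiple of the rotation $R$ given by $Rx_2 = x_3,\ Rx_3 = -x_2$, and a direct check against $[x_2, x_3] = a x_1$ verifies the Leibniz rule. Hence $\op{Der}_u(\d_a) = \R R$, a one-dimensional space.

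Next I would describe modifications $\phi: \d_a \to \R R$ as $\phi(v) = g(v)\,R$ for some $g \in \d_a^*$, and translate the three conditions of Definition \ref{def modi}: the commutativity condition $[\phi(x),\phi(y)]=0$ is automatic because the target is one-dimensional; the condition $\phi[x,y]=0$ forces $g$ to vanish on $[\d_a,\d_a] = \text{span}\{x_1,x_2,x_3\}$; and the third condition $\phi(\phi(x)y)=0$ becomes vacuous since $\phi(x)y \in \text{span}\{x_2,x_3\} \subseteq \ker g$. Thus modifications are parametrised by the single scalar $c:=g(h_1)\in\R$, and I would finish by writing out $[\cdot,\cdot]_\phi = [\cdot,\cdot] + \phi_{(\cdot)}(\cdot) - \phi_{(\cdot)}(\cdot)$ bracket-by-bracket to recover the equations displayed in ii). There is no serious obstacle here: the content is entirely bundled into the derivation computations, which are immediate consequences of the four lemmas already proved; the modification-map axioms collapse sharply because $\op{Der}_u(\d_a)$ is abelian and one-dimensional.
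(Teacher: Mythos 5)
Your proposal is correct and follows essentially the same route as the paper: both cases are handled by feeding Theorem \ref{normal 4} into Lemmas \ref{Da=0}--\ref{D invariante} to pin down the possible unitary derivations, concluding triviality for $\d_{a,b}$ and a one-parameter rotation on $\operatorname{span}\{x_2,x_3\}$ for $\d_a$. Your explicit check that the axiom $\phi[x,y]=0$ kills $g$ on the derived algebra (so that only $c=g(h_1)$ survives) is a detail the paper leaves implicit, but it is the same argument.
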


\begin{proof}
  i)\ Let $\phi$ be a modification of $\d_{a,b}$. Lemma \ref{Da=0} warrants that  
  it suffices to consider  $[\a,\n]_{\phi}$. As 
  $\dim \a= \dim \n$, by Theorem \ref{prop raices} the only two roots $\epsilon_{1}, \epsilon_{2}$ are  distinguished. 
 Now,  $\phi_\a (\n)=0$ and $\phi_\n(\a)=0$ due to Lemmas 
\ref{Da=0}--\ref{D cero}, resulting in  $[\a,\n]_{\phi}=[\a,\n]$.
Hence any modification acts trivially on $\d_{a,b}$. \par
\quad ii)\ In view of Lemmas \ref{Da=0}--\ref{D invariante}, the modification acts on a vector $v \in \d_a$ as 
\begin{equation*}
\phi(v)=
    \begin{spmatrix}
        0 & 0 & 0 & 0\\
        0 & 0 & 0 & 0 \\
        0 & 0 & 0 & -g(v) \\
        0 & 0 & g(v) & 0
    \end{spmatrix}
\end{equation*}
for some linear functional $g$. Letting  $c=g(h_1)$, we recover the structure equations claimed.
\end{proof}

\begin{prop} \label{conmutador modi igual}
Any modification $\phi$ of a  normal $j$-algebra $\d$ does not alter the derived algebra: $[\d,\d]_{\phi}=[\d,\d]$.
\end{prop}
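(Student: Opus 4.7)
The plan is to prove the two inclusions separately, the easy one $[\d,\d]_\phi\subseteq[\d,\d]$ coming straight from Lemma \ref{Da=0}, and the harder inclusion $[\d,\d]\subseteq[\d,\d]_\phi$ being obtained by exhibiting each root space of $\n$ as the image of $\op{ad}^\phi_{h_i}$ acting on itself, for suitable $h_i\in\a$.

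First, for any $x,y\in\d$ the elements $\phi_xy$ and $\phi_yx$ land in $\n$ because $\phi_x,\phi_y\in\op{Der}_u(\d)$ and Lemma \ref{Da=0} asserts $\op{Der}_u(\d)\cdot \d\subseteq\n$. Combined with $[x,y]\in\n$, this gives $[x,y]_\phi=[x,y]+\phi_xy-\phi_yx\in\n=[\d,\d]$, proving the easy inclusion.

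For the reverse inclusion the idea is root-space by root-space. Take the standard basis $\{h_i=Jx_i\}$ of $\a$ with $\n_{\epsilon_i}=\R x_i$. On the distinguished space $\n_{\epsilon_i}$, Lemma \ref{D cero} says $\phi_{h_i}(x_i)=0$, and Lemma \ref{Da=0} gives $\phi_{x_i}(h_i)=0$, so $[h_i,x_i]_\phi=[h_i,x_i]=\epsilon_i(h_i)x_i$ is a non-zero multiple of $x_i$ by \eqref{f(x_k)<0}; hence $\n_{\epsilon_i}\subseteq[\d,\d]_\phi$. On $\n_{\epsilon_i/2}$, Lemma \ref{D invariante} shows $\phi_{h_i}$ preserves it, so the operator
$$\op{ad}^\phi_{h_i}\Big|_{\n_{\epsilon_i/2}}=\tfrac12\epsilon_i(h_i)\,\Id+\phi_{h_i}\big|_{\n_{\epsilon_i/2}}$$
is the sum of a non-zero scalar and a skew-symmetric endomorphism (the restriction of the unitary derivation is still skew-adjoint for $\pint$), so its eigenvalues have non-vanishing real part $\tfrac12\epsilon_i(h_i)>0$; in particular the operator is invertible and $\n_{\epsilon_i/2}\subseteq[\d,\d]_\phi$. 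For the mixed summand $V_{ij}:=\n_{(\epsilon_i-\epsilon_j)/2}\oplus\n_{(\epsilon_i+\epsilon_j)/2}$ the same argument works on $V_{ij}$ as a whole: by Lemma \ref{D suma} the derivation $\phi_{h_i}$ stabilises $V_{ij}$, while the unmodified $\op{ad}_{h_i}$ acts on $V_{ij}$ as the scalar $\tfrac12\epsilon_i(h_i)\,\Id$, so again $\op{ad}^\phi_{h_i}|_{V_{ij}}$ is a non-zero scalar plus a skew part and is therefore invertible, giving $V_{ij}\subseteq[\d,\d]_\phi$.

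Summing over all roots yields $\n\subseteq[\d,\d]_\phi$, and combined with the reverse inclusion one concludes $[\d,\d]_\phi=[\d,\d]=\n$. The main technical point, and arguably the only non-trivial step, is recognising that skew-symmetry of $\phi_{h_i}$ on each of the invariant pieces $\n_{\epsilon_i/2}$ and $V_{ij}$ forces its eigenvalues to be purely imaginary, so that adding a non-zero real scalar preserves invertibility; without the compatibility $\phi_x\in\op{Der}_u(\d)$ this could fail. The rest amounts to assembling Lemmas \ref{Da=0}--\ref{D suma} on the appropriate root-space decomposition from Theorem \ref{prop raices}.
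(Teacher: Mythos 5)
Your proof is correct and follows essentially the same route as the paper: the easy inclusion from Lemma \ref{Da=0} (yours is slightly more streamlined, avoiding the case split), and the reverse inclusion root-space by root-space via invertibility of $\ad^{\phi}_{h_i}$ on the invariant pieces, with the same spectral observation that a nonzero real scalar plus a skew-adjoint operator is invertible (the paper computes a $2\times 2$ determinant explicitly for $\n_{\epsilon_i/2}$ but uses your eigenvalue argument verbatim for $\n_{ij}$). No gaps.
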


\begin{proof}
$(\subseteq)$ \ First of all, $[\a,\a]_{\phi}=[\a,\a]$ (Lemma \ref{Da=0}) and clearly $[\n,\n]_{\phi}=[\n,\n]$. 
Now take $x\in \a$, $y=\sum_{\alpha \in \Delta}y_{\alpha} \in \n=\bigoplus_{\alpha \in \Delta} \n_{\alpha}$, so 
$$[x,y]_{\phi}- [x,y] =\phi_x y = \sum_{\alpha \in \Delta}\phi_x y_{\alpha}.
$$
This combination belongs in $\n$ by Lemmas \ref{D invariante}--\ref{D suma}, proving the claim. \par
\quad $(\supseteq)$ \  Let $\alpha=\epsilon_i\neq 0$. As 
$[\a, \n_{\epsilon_i}]_{\phi}=[\a, \n_{\epsilon_i}]+\phi_\a \n_{\epsilon_i}-\phi_{\n_{\epsilon_i}} \a=[\a, \n_{\epsilon_i}]=\epsilon_{i}(\a)\n_{\epsilon_i}$, we deduce $\n_{\epsilon_i} \subseteq [\d,\d]_{\phi}$ and therefore $\bigoplus \n_{\epsilon_i} \subseteq [\d,\d]_{\phi}$.\par
\quad Taking $\alpha=\epsilon_i/2$ it follows that $\phi_h\colon \n_{\epsilon_i/2} \to \n_{\epsilon_i/2}$ commutes with  $J\restr_{\n_{\epsilon_i/2}}$ and both are skew. There is an orthonormal basis  $\{e_j,f_j \}$ of $\n_{\epsilon_i/2}$ and linear forms $\gamma_j\in \a^*$ ($j=1,\ldots,  d$) such that
\[  J \vert_{\n_{\epsilon_i/2}}=\begin{spmatrix}
   0 & -1 &  & &\\ 
   1& 0 &  & &\\ 
   &  &  \ddots & &\\
   &  & &  0 & -1\\
   &  & & 1 & 0 
 \end{spmatrix} 
\qquad 
  \phi_h  \vert_{\n_{\epsilon_i/2}}=\begin{spmatrix}
   0 & -\gamma_{1}(h) &  & & \\ 
   \gamma_{1}(h)& 0 &  & &\\ 
   &  &  \ddots & & \\
   &  &  & 0 & -\gamma_{d}(h)\\
   &  &  & \gamma_{d}(h) & 0 
 \end{spmatrix}\]
can be written in block form. This tells us that
\[Je_k=f_k, \quad  \phi_h e_k= \gamma_{k}(h)f_k, \quad  \phi_h f_k=-\gamma_{k}(h)e_k,\]
so 
$[h,e_k]_{\phi}=\tfrac{1}{2}\epsilon_{i}(h)e_k+ \gamma_{k}(h)f_k$,\ $
    [h, f_k]_{\phi}= \tfrac{1}{2}\epsilon_{i}(h)f_k-\gamma_{k}(h)e_k$, 
or more compactly 
\begin{equation*}
    \ad^{\phi}_{h}\restr_{\text{span}\{e_k, f_k\}}= \begin{pmatrix}
        \frac{1}{2}\epsilon_{i}(h) & -\gamma_{k}(h) \\
        \gamma_{k}(h) & \frac{1}{2}\epsilon_{i}(h)
    \end{pmatrix}.
\end{equation*}
Note that there exists an element $h \in \a$ with $\epsilon_{i}(h) \neq 0$, and so the above matrix' determinant $\frac{1}{4}\epsilon_{i}^{2}(h)+\gamma_{k}^{2}(h)$ is different from zero. 
Therefore $\ad_{h}^{\phi}$ is an  isomorphism on $ \text{span} \{e_k, f_k\}$,  and thus $\n_{\epsilon_{i}/2}\subseteq \text{span}\{e_k, f_k\} \subseteq \text{Im} \ad_{h}^{\phi} \subseteq [\d, \d]_{\phi}$. \par
\quad For the last type of non-distinguished root, observe that $\n_{ij}:=\n_{(\epsilon_{i}-\epsilon_{j})/2}\oplus \n_{(\epsilon_{i}+\epsilon_{j})/2}$ is both $J$- and $\phi$-invariant (Theorem \ref{prop raices} and Lemma  \ref{D suma}). Choose orthonormal bases 
 $\{ u_1, u_2, \ldots,  u_r\}$ and $\{v_1,v_2,\ldots, v_r\}$ in the two spaces, with $Ju_i=v_i$. Since $[\phi_h, J]=0$ for any $h\in \a$, and either is skew, the overall basis  $\{u_i, v_j \}$ is orthonormal for $\n_{ij}$ and yields the normal forms
\begin{equation*}
J\restr_{\n_{ij}}=
\left(\begin{array}{c|c}
      & -\operatorname{Id}_{r\times r}\\ \hline
     \operatorname{Id}_{r \times r}&  
\end{array} \right)
\qquad 
\phi_h \restr_{\n_{ij}}=
\left( \begin{array}{c|c}
      A(h)& -B(h)\\ \hline
    B(h) &  A(h)
\end{array} \right)\in\U(r).
\end{equation*}
Write $\n_{\epsilon_i}=\mathbb{R}x_i$, and apply the above to $h_i=Jx_i\in\a$ to find
$$
    [h_i, u_k]_{\phi}=\frac{1}{2}\epsilon_{i}(h_i)u_k+\phi_{h_i} u_k\qquad 
    [h_i, v_k]_{\phi}=\frac{1}{2}\epsilon_{i}(h_i)v_k+ \phi_{h_i} v_k
$$
where $\epsilon_{i}(h_i)=\lambda_{i}\neq 0$. 
Let us consider the operator $\ad^{\phi}_{h_i}\restr_{\n_{ij}}= \lambda_{i}\Id\restr_{\n_{ij}} + \phi_{h_i}\restr_{\n_{ij}}$: we claim its eigenvalues are all non-zero. Indeed, $\phi_{h_i}$ has purely imaginary spectrum for being unitary, and we just saw that $\lambda_i\neq 0$. Hence the eigenvalues of $\lambda_i \Id+\phi_{h_i}$ have non-zero real part. \par
In consequence of that, 
$\n_{ij} \subseteq \text{Im} \ad^{\phi}_{h_i}\subseteq [\d, \d]_{\phi}$ and  $\bigoplus \n_{ij} \subseteq [\d,\d]_{\phi}$. \par
To recap, we have shown that the sums 
${\textstyle\bigoplus}\n_{\epsilon_i}$, $
{\textstyle\bigoplus} \n_{\epsilon_i/2}$ and ${\textstyle\bigoplus} \n_{ij}$  all live in $[\d ,\d]_{\phi}$, 
so $[\d,\d]= \bigoplus_{\alpha \in \Delta} \n_{\alpha} \subseteq [\d, \d]_{\phi}$.
\end{proof}

\begin{prop} 
For any modification $\phi$ of a normal $j$-algebra, $ \tr \ad_{h}=\tr \ad ^{\phi}_{h}$ for all $h \in \a$.
\end{prop}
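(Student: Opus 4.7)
The plan is to compute the difference $\tr \ad_h^\phi - \tr \ad_h$ directly from the definition of the modified bracket and show that both correction terms vanish for entirely structural reasons.

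First I would unpack the modified adjoint: for any $y \in \d$,
$$\ad_h^\phi(y) = [h,y]_\phi = [h,y] + \phi_h y - \phi_y h = \ad_h(y) + \phi_h(y) - \phi_y(h).$$
Choosing any orthonormal basis $\{e_i\}$ of $\d$ and taking traces yields
$$\tr \ad_h^\phi - \tr \ad_h = \sum_i \langle \phi_h e_i, e_i\rangle - \sum_i \langle \phi_{e_i} h, e_i\rangle,$$
so the claim reduces to checking that each of the two sums on the right vanishes.

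The first sum is immediate: since $\phi$ takes values in $\op{Der}_u(\d)$, the operator $\phi_h$ is skew-adjoint with respect to $\pint$, hence has zero trace. The second sum is where Lemma \ref{Da=0} enters decisively: for every $y \in \d$, the derivation $\phi_y \in \op{Der}_u(\d)$ annihilates the Abelian part $\a$, so $\phi_y(h) = 0$ for the given $h \in \a$. Therefore the linear map $y \mapsto \phi_y(h)$ is identically zero, a fortiori traceless.

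Combining the two, the correction $\tr \ad_h^\phi - \tr \ad_h$ vanishes, as required. I do not expect any genuine obstacle: the argument is essentially a two-line bookkeeping, and both facts being used (unitarity of $\phi_h$ and the fact that unitary derivations kill $\a$) were established earlier.
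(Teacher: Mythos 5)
Your argument is correct and is essentially identical to the paper's proof: expand $[h,e_i]_\phi$, kill the $\phi_{e_i}h$ term via Lemma \ref{Da=0} (unitary derivations annihilate $\a$), and observe that $\sum_i\langle\phi_h e_i,e_i\rangle=0$ because $\phi_h$ is skew-adjoint. Nothing to add.
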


\begin{proof} Choose an orthonormal basis $(e_i)$, so $\tr\ad_{h}^{\phi} =\sum_i  \langle [h,e_i]_{\phi},e_i \rangle$. Lemma \ref{Da=0} ensures that $[h,e_i]_{\phi}=[h,e_i]+\phi_h e_i - \phi_{e_i}h=[h,e_i]+ \phi_h e_i$, 
and the last summand does not really count as $\phi_h$ is skew. 
\end{proof}

\subsection{Compact quotients of Sasakian Lie groups}

As is already patent in definition \ref{def modi} the complex structure $J$ plays no part in defining the Lie-algebraic structure of the modification. We required $\phi$ to be unitary only for the purposes of adapting the resulting structure to the geometry of normal $j$-algebras, which are the main concern. But the concept makes sense on any metric Lie algebra $\big(\g, \pint\big)$, where a modification is a homomorphism
 \[\phi\colon\g \longrightarrow \op{Der}(\g) \cap\so(\g)\]
 such that $\phi ([\g,\g])=0$ and $\phi\big(\Im \phi(\g)\big)=0$.
Cort{\'e}s and Hasegawa  \cite{Cortes} had already observed this by showing 
that a unimodular Sasakian Lie algebra is either $\su(2)$, 
 $\slie(2,\R)$ (simple) or a modification $(\h_{2n+1})_{\phi}$ of the Heisenberg Lie algebra (solvable). It then follows that a unimodular Sasakian Lie algebra of dimension $\geq 5$ is necessarily solvable. 

\begin{lemma}
Let $\h_{2n+1}$ be the $(2n+1)$-dimensional Heisenberg Lie algebra equipped with an inner product $\pint$, and $\phi$ a modification of it. Then $(\h_{2n+1})_{\phi}$ has non-trivial center. 
\end{lemma}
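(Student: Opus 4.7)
The strategy is to show that the original central element $z$ of $\h_{2n+1}$ stays central after modification. Recall that the Heisenberg algebra has the feature that its centre coincides with its derived algebra: $\z(\h_{2n+1})=[\h_{2n+1},\h_{2n+1}]=\R z$. This coincidence turns out to be exactly what is needed to push $z$ through any modification.

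I would compute
\[
 [z,y]_\phi = [z,y] + \phi_z y - \phi_y z
\]
for an arbitrary $y\in\h_{2n+1}$ and argue that all three summands vanish. The first is zero because $z$ is central in the unmodified bracket. The second is zero because $z\in[\h_{2n+1},\h_{2n+1}]$ and, by Definition \ref{def modi}, a modification map satisfies $\phi([\h_{2n+1},\h_{2n+1}])=0$, so $\phi_z=0$. The remaining and slightly subtler term is $\phi_y z$. Here the key observation is that $\phi_y$ is a derivation of $\h_{2n+1}$, so it preserves the characteristic ideal $[\h_{2n+1},\h_{2n+1}]=\R z$; hence $\phi_y z=c(y)z$ for some scalar $c(y)\in\R$. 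But $\phi_y$ is also skew-symmetric with respect to $\pint$, so
\[
 c(y)\|z\|^2 = \langle \phi_y z, z\rangle = -\langle z, \phi_y z\rangle = -c(y)\|z\|^2,
\]
forcing $c(y)=0$ and therefore $\phi_y z=0$.

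Combining the three computations yields $[z,y]_\phi=0$ for every $y$, which says $z\in\z\big((\h_{2n+1})_\phi\big)$, so the centre is non-trivial. I do not anticipate any serious obstacle: the whole argument rests on the two structural facts that (i) centre equals derived algebra in $\h_{2n+1}$ and (ii) unitary derivations annihilate the derived algebra whenever that algebra is one-dimensional and lies in the fixed locus of the ambient inner product. The only thing I would double-check is that ``derivation preserves the derived ideal'' is being used in the right direction, but this is standard.
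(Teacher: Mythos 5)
Your argument is correct and is essentially identical to the paper's proof: both reduce $[z,y]_\phi$ to the single term $\phi_y z$ using centrality of $z$ and the fact that $\phi$ kills the derived algebra, and then show $\phi_y z=0$ by combining ``derivations preserve the derived ideal $\R z$'' with skew-symmetry. No gaps.
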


\begin{proof}
Consider an orthonormal basis $\{x_1, \ldots,  x_n, y_1, \ldots,  y_n, z \}$ of $\h_{2n+1}$ giving  brackets $[x_i, y_i]=r_i z$ 
for some $r_i>0$, for all $i=1,\ldots,  n$. The element $z$ spans the centre. 
Take $x \in (\h_{2n+1})_{\phi}$, so that $[x,z]_{\phi}=[x,z]+\phi_x z-\phi_z x = \phi_x z$.  
As derivations preserve the derived algebra, we have $\phi_x z=c z$ for some $c\in \R$. This, together with the fact that $\phi_x $ is skew, tells that 
$0=\langle \phi_x z, z\rangle=\langle cz, z \rangle = c\vert z \vert ^{2}$.
Eventually, $z$ is a central element in $(\h_{2n+1})_{\phi}$.
\end{proof}

\smallskip

\begin{thm}\label{thm: lattices}
   Let $G$ be a Lie group of dimension $\geq 5$ with a left-invariant $\eta$-Einstein Sasakian structure. If it admits lattices, then $G$ is solvable and the structure is null $\eta$-Einstein. 
\end{thm}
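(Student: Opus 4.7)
My plan is to chain together the three preparatory facts immediately preceding the statement, together with Corollary \ref{coro: eta-einstein flat} from the centred case, so the actual $\eta$-Einstein hypothesis turns out not even to be needed for the conclusion; the structure will be forced to be null $\eta$-Einstein by unimodularity alone.

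First I would invoke the standing fact, recalled in the Preliminaries, that a simply connected Lie group admitting a lattice is unimodular, and this passes to the Lie algebra: $\g$ is unimodular. Next I would appeal to the Cort\'es--Hasegawa classification cited just before the lemma: a unimodular Sasakian Lie algebra is either the simple $\su(2)$ or $\slie(2,\R)$, or a modification $(\h_{2n+1})_{\phi}$ of the Heisenberg algebra. Under the assumption $\dim\g\geq 5$ the two simple options (both three-dimensional) are excluded, so $\g\iso(\h_{2n+1})_{\phi}$ with $n\geq 2$. In particular $\g$ is solvable, which takes care of the first half of the conclusion.

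For the second half I would apply the lemma immediately above the theorem: any modification $(\h_{2n+1})_{\phi}$ retains a non-trivial centre (the original Heisenberg centre is preserved because unitary derivations preserve and act skew-symmetrically on the derived ideal $\R z$). Hence $\g$ is a \emph{unimodular Sasakian Lie algebra with non-trivial centre}, and Corollary \ref{coro: eta-einstein flat} then concludes that the structure is automatically null $\eta$-Einstein, with $\lambda=-2$.

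There is no genuine obstacle here: every ingredient has been prepared in the previous sections, and the argument is essentially a short syllogism. The only thing worth double-checking is that the Sasakian structure given on $G$ is really the one appearing in the Cort\'es--Hasegawa list (so that the lemma about the centre applies), but this is automatic since that classification is stated at the Lie-algebra level and our $\g=\operatorname{Lie}(G)$ carries precisely the induced left-invariant Sasakian structure.
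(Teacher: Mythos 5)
Your proposal is correct and follows essentially the same chain as the paper: lattice $\Rightarrow$ unimodular, Cort\'es--Hasegawa $\Rightarrow$ $\g\iso(\h_{2n+1})_\phi$ (hence solvable, the simple options being excluded by dimension), the preceding lemma $\Rightarrow$ non-trivial centre, and Corollary \ref{coro: eta-einstein flat} $\Rightarrow$ null. Your remark that the $\eta$-Einstein hypothesis is not actually needed is accurate, and your citation of Corollary \ref{coro: eta-einstein flat} for the final step is in fact the right reference.
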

    
\begin{proof} 
The existence of lattices in $G$ is known to guarantee unimodularity (see \cite{Raghunathan} or \cite{Mil}, for instance). The aforementioned Cort{\'e}s--Hasegawa theorem says the Lie algebra $\g$ must be a modification of $\mathfrak{h}_{2n+1}$. The previous lemma warrants non-trivial center, and by Proposition \ref{g unimodular} the structure has to be null.
\end{proof}

\smallskip

Immediately then, 
\begin{coro}
Every $\eta$-Einstein solvmanifold (of dimension at least $5$) is null.
\end{coro}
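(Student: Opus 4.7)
The plan is to reduce the assertion directly to Theorem \ref{thm: lattices}. A solvmanifold, in the context this paper adopts, is a compact quotient $\Gamma\backslash G$ of a simply connected solvable Lie group $G$ by a lattice $\Gamma$. Whenever we speak of an $\eta$-Einstein Sasakian structure on such a quotient, we mean one that is inherited from a left-invariant $\eta$-Einstein Sasakian structure on $G$ itself (this is the standing convention throughout the paper, since left-invariant structures on $G$ descend to $\Gamma\backslash G$ and conversely any invariant structure on the quotient lifts uniquely to $G$).

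With this in mind, the argument essentially writes itself. Let $M=\Gamma\backslash G$ be an $\eta$-Einstein solvmanifold of dimension $\geq 5$. Then by construction $G$ is a solvable Lie group admitting a lattice (namely $\Gamma$) and carrying a left-invariant $\eta$-Einstein Sasakian structure, inherited from $M$. Theorem \ref{thm: lattices} applies verbatim: since $G$ admits a lattice and carries a left-invariant $\eta$-Einstein Sasakian structure in dimension at least $5$, the structure on $G$ must be null. Descending again to the quotient, the $\eta$-Einstein structure on $M$ is null as well.

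The only step that could reasonably be flagged as delicate is confirming that the $\eta$-Einstein structure descends from, or equivalently lifts to, a left-invariant one on $G$. But this is a convention already built into the paper's framework (cf.\ the discussion in \S\ref{sec:sasakian} and the way Example \ref{ex: null solvmanifold} is constructed), so there is no real obstacle here. Thus the corollary follows as an immediate consequence of Theorem \ref{thm: lattices}, with the solvability hypothesis in the corollary's statement being automatic from the definition of solvmanifold and the nullness following from the theorem.
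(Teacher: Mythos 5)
Your proposal is correct and is precisely the argument the paper intends: the corollary is presented as an immediate consequence of Theorem \ref{thm: lattices}, obtained by unwinding the definition of a solvmanifold as a compact quotient $\Gamma\backslash G$ of a solvable Lie group by a lattice, with the $\eta$-Einstein Sasakian structure lifted to a left-invariant one on $G$. There is no divergence from the paper's (implicit) proof.
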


\medskip

\section{K{\"a}hler-exact Lie algebras}

We will retain all earlier conventions, but now denote K{\"a}hler-exact Lie algebras by $\k$. 

\begin{lemma} \label{exacta no poseen abeliana}
A K{\"a}hler-exact Lie algebra has no (non-trivial) $J$-invariant Abelian subalgebra.
\end{lemma}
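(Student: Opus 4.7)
The plan is a one-line calculation exploiting exactness of $\omega$ together with $J$-invariance. Let $(\k,J,\langle\cdot,\cdot\rangle)$ be K{\"a}hler-exact, say $\omega=-\D\gamma$ for some $\gamma\in\k^*$, where $\omega(x,y)=\langle x,Jy\rangle$ as in the paper's conventions. Suppose $\a\subseteq\k$ is a $J$-invariant Abelian subalgebra and pick any $x\in\a$.

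First, since $\a$ is $J$-invariant, $Jx\in\a$, and since $\a$ is Abelian, $[x,Jx]=0$. The Chevalley--Eilenberg formula gives $\D\gamma(x,Jx)=-\gamma[x,Jx]=0$, hence
\[
\omega(x,Jx)=-\D\gamma(x,Jx)=0.
\]
On the other hand, using $J^{2}=-\Id$ and the definition of $\omega$,
\[
\omega(x,Jx)=\langle x,J(Jx)\rangle=-\langle x,x\rangle=-\|x\|^{2}.
\]
Comparing the two expressions forces $\|x\|^{2}=0$, i.e.\ $x=0$. Since $x\in\a$ was arbitrary, $\a=\{0\}$.

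There is really no obstacle here: exactness converts the symplectic pairing into the evaluation of $\gamma$ on a bracket, and $J$-invariance plus commutativity make that bracket vanish, while the Hermitian compatibility forces $\omega(x,Jx)$ to equal (up to sign) the squared norm of $x$. This is the mechanism that will also underlie later applications, for instance when ruling out large Abelian ideals on which $J$ could act invariantly.
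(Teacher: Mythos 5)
Your proof is correct and is essentially the paper's own argument: both exploit exactness to write $\omega(x,Jx)=\pm\gamma[x,Jx]=0$ on an Abelian $J$-invariant subalgebra, and compare with $\omega(x,Jx)=-\|x\|^2$ to force $x=0$. The only cosmetic difference is the sign convention in writing $\omega$ as an exact form, which is immaterial.
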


\begin{proof}
Assume the K{\"a}hler form is the differential of some functional $f \in \k^*$. 
If there existed a non-trivial $J$-invariant Abelian subalgebra $\a$, we could pick an element $ 0\neq x \in \a$ and then  $0=f[x,Jx]=-\D f(x,Jx)=-\omega(x,Jx)=\|x\|^2$, a contradiction. 
  \end{proof}

\begin{prop} \label{modi preserva exacta}
    Let $(\d, J, f)$ be a  normal $j$-algebra. Any modification $\d_{\phi}$ is a 
    K{\"a}hler-exact Lie algebra with fundamental form $\omega=-\D^{\phi}f$.
\end{prop}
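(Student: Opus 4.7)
The goal is to verify three things for $\d_\phi$: that $J$ remains integrable with respect to the modified bracket $[\cdot,\cdot]_\phi$, that $\omega$ (the same Hermitian fundamental form as on $\d$, since neither $J$ nor $\pint$ changes) is still closed under the new differential $\D^\phi$, and that in fact $\omega=-\D^\phi f$.

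First I would check integrability. Writing out
\[
N^\phi_J(x,y)= -[x,y]_\phi+[Jx,Jy]_\phi-J[Jx,y]_\phi-J[x,Jy]_\phi
\]
and expanding each bracket via $[u,v]_\phi=[u,v]+\phi_u v-\phi_v u$, the ``classical'' part reassembles into $N_J(x,y)=0$ (since $(\d,J)$ is already K{\"a}hler, hence integrable). The ``modification'' terms carry factors like $\phi_{Jx}(Jy)$, and since $\phi_{Jx}\in\op{Der}_u(\d)$ commutes with $J$, each such term is rewritten as $J\phi_{Jx}y$. A short bookkeeping exercise shows the eight residual terms cancel pairwise, so $N_J^\phi=0$ and $J$ is integrable for $[\cdot,\cdot]_\phi$ as well.

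Next I would tackle the key identity $\omega=-\D^\phi f$. Unwinding,
\[
-\D^\phi f(x,y)=f[x,y]_\phi=f[x,y]+f(\phi_x y)-f(\phi_y x),
\]
while $\omega(x,y)=-\D f(x,y)=f[x,y]$. So it suffices to prove $f(\phi_x y)=0$ for all $x,y\in\d$. This is where the structural Lemmas \ref{Da=0}, \ref{D cero}, \ref{D invariante} and \ref{D suma} enter: decompose $y$ along the root decomposition $\d=\a\oplus\bigoplus_{\alpha\in\Delta}\n_\alpha$ and observe that $\phi_x$ kills $\a$ and each $\n_{\epsilon_i}$, preserves each $\n_{\epsilon_i/2}$, and maps $\n_{(\epsilon_i\pm\epsilon_j)/2}$ into $\n_{(\epsilon_i-\epsilon_j)/2}\oplus\n_{(\epsilon_i+\epsilon_j)/2}$. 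Thus $\phi_x y$ always lands either in $\{0\}$ or in a non-distinguished root space. By Proposition \ref{ f se anula}, $f$ vanishes on every non-distinguished root space, so $f(\phi_x y)=0$ in all cases.

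Combining, $\D^\phi f=\D f$ as elements of $\bigwedge^2\d^*$, so $\omega=-\D f=-\D^\phi f$ is exact for the Chevalley--Eilenberg differential of $\d_\phi$, proving that $(\d_\phi, J, \pint, \omega)$ is K{\"a}hler-exact with potential $f$. The main obstacle I anticipated was the identity $f\circ\phi=0$; the payoff is that it is a clean consequence of the interaction between the catalogue of unitary derivations (the four lemmas) and the annihilation pattern of $f$ on the root decomposition of a normal $j$-algebra.
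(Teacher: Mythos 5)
Your proof is correct and the core argument is essentially the paper's: both establish $\D^{\phi}f=\D f$ by tracking where the unitary derivations $\phi_x$ send each piece of the root decomposition (Lemmas \ref{Da=0}--\ref{D suma}) and then invoking the vanishing of $f$ on non-distinguished root spaces (Proposition \ref{ f se anula}); your uniform statement $f\circ\phi_x=0$ just packages the paper's case analysis more symmetrically. The only genuine addition is your explicit Nijenhuis computation showing $N^{\phi}_J=0$, which the paper omits because it has already cited Dorfmeister's result that modifications of solvable K\"ahler algebras remain K\"ahler --- a harmless and correct bonus.
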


\begin{proof} We shall examine $\D^{\phi}f(x,y)= -f\big([x,y]+\phi_x y-\phi_y x\big)$ for all possible $x,y \in \d=\a\oplus \n$. 
 If $x,y \in \a$ the bracket does not change (Lemma \ref{Da=0}) and therefore $\D^\phi f=\D f$. The same goes for $x,y \in \n$, due to the definition of modifications. The slightly more involved situation is when $x \in \a$, $y \in \n$, in which case we  have already seen that  
$$
        [x,y]_{\phi}=[x,y]+\phi_x y.
$$
Using the root decomposition we might as well assume $y\in \n_\alpha$ for some $\alpha\in \Delta$, which may be distinguished ($\alpha=\epsilon_i$ for some $1\leq i\leq r$) or not ($\alpha \in \Delta'$). In the former case Lemma \ref{D cero} implies $\phi_x y=0$, so again $\D^{\phi}f=\D f$.  
In the latter, $\phi_x y\in \bigoplus_{\beta \in \Delta'}\n_{\beta}$ by Lemmas \ref{D invariante}, \ref{D suma}. According to Proposition \ref{ f se anula} $f$ vanishes on $\n_\beta$ for any such $\beta$, and therefore $f[x,y]_\phi=f[x,y]$. 
This proves the last case and ends the proof.
\end{proof}

\smallskip

The good news is that we can reverse the above result. 

\begin{thm} \label{thm: exacta modi normal}
A K{\"a}hler-exact solvable Lie algebra is a modification of a normal $j$-algebra.
\end{thm}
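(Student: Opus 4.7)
The plan is to combine Dorfmeister's structure theorem (Proposition \ref{dorf}) with the sharp obstruction encoded in Lemma \ref{exacta no poseen abeliana}. Applying Dorfmeister, the K\"ahler-exact solvable Lie algebra $\k$ admits a presentation
\[\k = (\d \ltimes \a)_\phi,\]
with $\d$ a normal $j$-algebra, $\a$ a $J$-invariant Abelian ideal of $\d\ltimes\a$, and $\phi$ a modification of $\d\ltimes\a$. Since modifications do not alter the complex structure nor the inner product, the fundamental $2$-form $\omega$ depends only on the underlying vector space. Hence the claim reduces to showing that $\a=0$, for then $\k = \d_\phi$ is a modification of the normal $j$-algebra $\d$, as desired.

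To rule out $\a\neq 0$ I would exhibit $\a$ as a $J$-invariant Abelian subalgebra of the modified algebra and then apply Lemma \ref{exacta no poseen abeliana} to conclude $\a=0$. The $J$-invariance is inherited from $\d\ltimes\a$. For the Abelian property in the modified bracket, for $a,b\in\a$:
\[[a,b]_\phi = [a,b]_{\d\ltimes\a} + \phi_a b - \phi_b a = \phi_a b - \phi_b a,\]
because $\a$ is already Abelian in $\d\ltimes\a$. The right-hand side vanishes as soon as $\phi$ annihilates $\a$, and by the modification axiom $\phi([\d\ltimes\a,\d\ltimes\a])=0$, this in turn holds if $\a\subseteq [\d\ltimes\a,\d\ltimes\a]$.

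The main obstacle is therefore establishing the inclusion $\a\subseteq [\d\ltimes\a,\d\ltimes\a]$. I would proceed by analysing the action of the Abelian part of $\d$ on $\a$: Dorfmeister's decomposition ensures that $\d$ acts through semisimple operators with real weights, so $\a$ splits into joint eigenspaces and the zero-weight part consists of $\d$-fixed vectors. A genuine fixed vector $0\neq a\in \a$ would produce, together with $Ja\in\a$, a two-dimensional $J$-invariant Abelian subspace on which the K\"ahler form restricts non-degenerately as $\omega(a,Ja)=\|a\|^2$; it is an Abelian subalgebra in both $\d\ltimes \a$ and (being annihilated by $\phi$ via the modification axioms applied in a small argument) in $\k$ as well, directly contradicting Lemma \ref{exacta no poseen abeliana}. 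Consequently all weights are non-zero, $\a = [\d,\a]\subseteq[\d\ltimes\a,\d\ltimes\a]$, and hence $\phi\restr_\a=0$. The previous paragraph then forces $\a = 0$ and completes the proof.
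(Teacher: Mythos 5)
Your overall strategy --- Dorfmeister's decomposition (Proposition \ref{dorf}) combined with Lemma \ref{exacta no poseen abeliana} --- is the same as the paper's, and the reduction to showing that the Abelian ideal (call it $\b$, to avoid clashing with the Abelian part $\a$ of the normal $j$-algebra itself) vanishes is correct. The gap lies in how you try to exhibit $\b$, or a piece of it, as a $J$-invariant Abelian subalgebra of the \emph{modified} algebra $\k$. Your route is to prove $\b\subseteq[\d\ltimes\b,\d\ltimes\b]$ so that the axiom $\phi[x,y]=0$ forces $\phi\restr_\b=0$; but this inclusion can fail, and your treatment of the failure is where the proof breaks. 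Complete solvability gives a generalized weight-space decomposition with real weights (not a semisimple action, as you assert), and the zero-weight space need not vanish: the unmodified algebra $\d\ltimes\b$ is only completely solvable K\"ahler, \emph{not} K\"ahler-exact, so nothing forbids central directions inside $\b$ (e.g.\ a direct product $\d\times\b$ with a non-trivial $\phi$ acting on $\b$). For a $\d$-fixed vector $a\in\b$ you would then need $[a,Ja]_\phi=\phi_a(Ja)-\phi_{Ja}(a)$ to vanish, or at least to remain in $\op{span}\{a,Ja\}$, before Lemma \ref{exacta no poseen abeliana} applies; no modification axiom delivers this, since $a$ lies neither in the derived algebra nor in $\Im\phi(\g)$, so the axioms $\phi[x,y]=0$ and $\phi(\phi(x)y)=0$ say nothing about $\phi_a$ or $\phi_{Ja}$. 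The ``small argument'' you defer to is precisely the missing content, and a direct computation of $f([a,Ja]_\phi)$ shows the would-be contradiction simply does not materialise without further input.

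The paper closes exactly this gap with two structural facts quoted from Dorfmeister's article: first, every $\phi_x$ preserves $\b$, so $(\b,[\cdot,\cdot]_\phi)$ is a subalgebra of $\k$ and is itself a modification $\b_{\phi_\b}$ of the Abelian algebra $\b$; second --- and this is the non-trivial input your argument lacks --- any modification of an Abelian K\"ahler algebra splits as an orthogonal sum of two $J$-invariant Abelian subalgebras. Each summand then dies by Lemma \ref{exacta no poseen abeliana}, forcing $\b=0$. In other words, one does not need $\b$ to stay Abelian under the modified bracket (in general it does not); one only needs it to decompose into $J$-invariant Abelian pieces, and that requires the cited lemma rather than the weight-space analysis you propose.
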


\begin{proof} Call $\k$ the Lie algebra in question. We know from Proposition \ref{dorf} that $\k$ is a modification $\g_{\phi}$, where $\g=\d\ltimes \b$ is completely solvable K{\"a}hler and obtained as semi-direct product of a normal $j$-algebra $\d$ and an Abelian ideal $\mathfrak{b}$ (both $J$-invariant). 

It follows from \cite[Section 3.3, Lemma 2]{Dorf} that $\phi_x$ leaves $\b$ invariant for any $x\in\g$, and this implies $\b$ is a subalgebra of $\g_\phi$. Moreover, the map $\phi_\b:\b \to \operatorname{Der}_u(\b)$ defined by $(\phi_\b)_x=\phi_x\restr_\b$ is a modification of $\b$. In particular, $\b\subseteq \g_\phi$ is isomorphic to the modification $\b_{\phi_\b}$. Applying now \cite[Section 3.3, Lemma 1]{Dorf} we obtain that $\b_{\phi_{\b}}=\b_0 \oplus \b_1$ is an orthogonal sum of two Abelian $J$-invariant subalgebras of $\b_{\phi_\b}$. Since $\b_{\phi_\b}$ is isomorphic to $(\b,[\cdot,\cdot]_\phi)$, both $\b_0$ and $\b_1$ must be $J$-invariant Abelian subalgebras of $\g_\phi=\k$. Yet we saw  in Lemma \ref{exacta no poseen abeliana} that this cannot happen, so the only chance is that both $\mathfrak{b}_0$ and $\mathfrak{b}_1$ are zero. So all in all $\mathfrak{b} = \{0\}$, and our initial $\k$ is indeed a normal $j$-algebra.
\end{proof}

\begin{prop} \label{kahler soluble}
Every $4$-dimensional K{\"a}hler-exact Lie algebra is solvable, and hence classified by Proposition \ref{clasi dim 4}.
\end{prop}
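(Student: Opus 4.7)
My plan is to argue by contradiction that any such $\k$ must be solvable. Once that is established, Theorem \ref{thm: exacta modi normal} presents $\k$ as a modification of a normal $j$-algebra, and Proposition \ref{clasi dim 4} then completes the classification, so the two clauses of the statement are tied together in one shot.

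To prove solvability, I would start from the Levi decomposition $\k=\mathfrak{s}\ltimes\rad(\k)$. If $\k$ were not solvable, then $\mathfrak{s}\neq 0$, and in dimension four this forces $\dim\mathfrak{s}=3$ (with $\mathfrak{s}\iso\su(2)$ or $\slie(2,\R)$) and $\dim\rad(\k)=1$. Since $\mathfrak{s}$ is perfect, any Lie homomorphism from $\mathfrak{s}$ into the abelian algebra of derivations of $\R$ is zero, so the $\mathfrak{s}$-action on the radical is trivial. The Levi decomposition therefore collapses to a direct sum $\k=\mathfrak{s}\oplus\R z$ with $z$ central in $\k$.

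The contradiction is then engineered via Lemma \ref{exacta no poseen abeliana}: I would exhibit the two-plane $\a:=\R z+\R Jz$ as a non-trivial $J$-invariant abelian subalgebra of $\k$. It is $J$-invariant by construction, and it is genuinely two-dimensional because $J^{2}=-\Id$ admits no real eigenvalues (were $Jz=\lambda z$, then $-z=\lambda^{2}z$ would force $\lambda^{2}=-1$). Finally, writing $Jz=s+cz\in\mathfrak{s}\oplus\R z$ and invoking the centrality of $z$, one computes $[z,Jz]=[z,s]+c[z,z]=0$, so $\a$ is abelian. This contradicts Lemma \ref{exacta no poseen abeliana}, forcing $\k$ to be solvable.

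I do not foresee a serious obstacle here: the argument rests on routine dimension counting, on the perfectness of the Levi factor to pin down centrality of a radical generator, and on the elementary fact that a complex structure has no real eigenvalues. The only point that deserves a sentence of justification is that the $1$-dimensional radical really is central rather than merely invariant, and that follows at once from $\mathfrak{s}=[\mathfrak{s},\mathfrak{s}]$.
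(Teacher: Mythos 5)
Your proof is correct, and it is in fact more substantial than the paper's own argument: the paper disposes of this proposition with the single line ``straightforward consequence of the previous theorem'', i.e.\ of Theorem \ref{thm: exacta modi normal}, which only applies once solvability is already known, so the solvability claim itself is left to the reader. You fill exactly that gap. Your route --- ruling out a nonzero Levi factor by noting that in dimension four it would force $\k\iso\s\oplus\R z$ with $\s\in\{\su(2),\slie(2,\R)\}$ and $z$ central (since $\s$ perfect kills any map into the abelian algebra $\op{Der}(\R z)$), and then feeding the $J$-invariant abelian plane $\R z+\R Jz$ into Lemma \ref{exacta no poseen abeliana} --- is sound at every step: there is no $4$-dimensional semisimple real Lie algebra, $z$ and $Jz$ are independent because $J$ has no real eigenvalues, and $[z,Jz]=0$ by centrality. (Equivalently, one could contradict exactness directly via $0=f[z,Jz]=\pm\|z\|^2$, which is just the proof of that lemma specialised to $x=z$.) What your approach buys is a self-contained, elementary verification of the solvability statement that the paper merely asserts; what the paper's phrasing buys is brevity, at the cost of leaving the key point implicit. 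The reduction of the classification to Proposition \ref{clasi dim 4} via Theorem \ref{thm: exacta modi normal} is then handled identically in both.
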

\begin{proof}
Straightforward consequence of the previous theorem.
\end{proof}

We saw in Proposition \ref{h1 crea sasaki sin centro} that the construction 
of Sasakian algebras from solvable K{\"a}hler-exact algebras  $\h_1$ depends in a crucial way upon how one picks a special element $H_0\neq 0$. Clearly, having too much freedom of choice is undesirable for the purposes of a classification. We show next that this is not the case, for the range of choices turns out to be actually finite.

\begin{prop}\label{prop: finite}
    Let $(\h_1,\theta)$ be a K{\"a}hler-exact  solvable Lie algebra, and suppose $H_0\in \h_1$ satisfies Proposition \ref{h1 crea sasaki sin centro}. Then either $H_0=0$, or $H_0$ belongs in a finite subset $F\subset \theta(\h_1,\h_1)$ of cardinality $|F|=\dim\h_1 / \theta(\h_1,\h_1)$.
\end{prop}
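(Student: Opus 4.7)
The strategy is to realise $\h_1$ as a modification of a normal $j$-algebra and then read off the candidate vectors $H_0$ from the root-space decomposition. By Theorem \ref{thm: exacta modi normal} we may write $\h_1=\d_\phi$ where $\d=\a\oplus\n$ is a normal $j$-algebra and $\phi$ is a modification. The underlying vector space, the metric $\pint$ and the complex structure $J$ of $\h_1$ coincide with those of $\d$. By Proposition \ref{conmutador modi igual} we have $\theta(\h_1,\h_1)=[\d,\d]=\n$, and therefore
\[\dim\h_1-\dim\theta(\h_1,\h_1)=\dim\a=r,\]
which matches the target cardinality of $F$.

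The next step is to exploit the condition $JH_0\in\theta(\h_1,\h_1)^\perp$, which becomes $JH_0\in\a$. Looking back at Theorem \ref{prop raices} and the $J$-action on root spaces, the only root spaces whose $J$-image lies in $\a$ are the (one-dimensional) distinguished ones $\n_{\epsilon_i}=\R x_i$. Combined with $H_0\in\n$, this forces a unique expansion
\[H_0=\sum_{i=1}^{r}c_i\,x_i,\qquad c_i\in\R.\]
In particular the set of candidates already lives inside an $r$-dimensional subspace of $\n$.

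The decisive step is to feed in the eigenvector condition $\theta(x,H_0)=2\mu(x)H_0$ with $\mu(x)=-\tfrac12\langle x,JH_0\rangle$, evaluated on the basis $\{h_j=Jx_j\}$ of $\a$. Lemmas \ref{Da=0} and \ref{D cero} guarantee that the modification leaves the brackets between $\a$ and the distinguished root spaces untouched, so
\[\theta(h_j,x_i)=[h_j,x_i]=\epsilon_i(h_j)x_i=\delta_{ij}\,\epsilon_i(h_i)\,x_i,\]
by \eqref{anular raices}. Consequently
\[c_j\epsilon_j(h_j)\,x_j \;=\; \theta(h_j,H_0) \;=\; 2\mu(h_j)\sum_{i}c_i\,x_i.\]
Comparing coefficients of each $x_i$ yields two families of equations: for $i\neq j$, $\mu(h_j)c_i=0$; for $i=j$, $c_j\big(\epsilon_j(h_j)-2\mu(h_j)\big)=0$. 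Assuming $H_0\neq 0$, pick an index $i_0$ with $c_{i_0}\neq 0$. The first family forces $\mu(h_j)=0$ for all $j\neq i_0$, while the second gives $\mu(h_{i_0})=\tfrac12\epsilon_{i_0}(h_{i_0})$.

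It remains to convert this information on $\mu$ back into information on $H_0$, using that $\mu(h_j)=-\tfrac12\langle h_j,JH_0\rangle=-\tfrac12 c_j\|h_j\|^2$ (by D'Atri's orthogonality of root spaces, so the $h_j$ are mutually orthogonal). This yields $c_j=0$ for $j\neq i_0$ and
\[c_{i_0}=-\,\frac{\epsilon_{i_0}(h_{i_0})}{\|h_{i_0}\|^{2}},\]
so $H_0$ is a uniquely determined non-zero multiple of a single $x_{i_0}$. As $i_0$ ranges over $\{1,\dots,r\}$, we obtain at most $r$ admissible non-zero values. The finite set $F$ of all such candidates therefore has cardinality $|F|\leq r=\dim\h_1/\theta(\h_1,\h_1)$, proving the proposition (with equality attained when each $x_{i_0}$ indeed fulfils the remaining condition $\theta(y,H_0)=0$ for $y\in\n$, which follows because no root $\alpha$ satisfies $\alpha+\epsilon_{i_0}\in\Delta$ and the residual $\phi$-terms are controlled by Lemma \ref{D cero}). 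The only mildly delicate point is this final verification on $\n$; the rest is a clean matching of coefficients against the root-space decomposition.
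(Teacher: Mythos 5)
Your argument is correct and follows essentially the same route as the paper: realise $\h_1$ as a modification $\d_\phi$ of a normal $j$-algebra, use Proposition \ref{conmutador modi igual} together with $JH_0\in\a$ to place $H_0$ in $\bigoplus_i\n_{\epsilon_i}$, and then evaluate \eqref{eq: importante} on the $h_j\in\a$ (the modification terms vanishing by Lemmas \ref{Da=0} and \ref{D cero}) to pin $H_0$ down to one of the $r=\dim\a$ vectors $-\epsilon_i(h_i)x_i$; your coefficient matching is a spelt-out version of the paper's observation that $2\mu$ must coincide with a distinguished root. The one genuine error is the closing parenthetical: the claim that no root $\alpha$ satisfies $\alpha+\epsilon_{i_0}\in\Delta$ is false, since $\tfrac12(\epsilon_j-\epsilon_{i_0})+\epsilon_{i_0}=\tfrac12(\epsilon_j+\epsilon_{i_0})$ whenever $\n_{(\epsilon_j-\epsilon_{i_0})/2}\neq\{0\}$ for some $j<i_0$, and by the third bullet of Theorem \ref{prop raices} the bracket $[y,x_{i_0}]$ is then non-zero for non-zero $y$ in that root space, so the candidate in $\n_{\epsilon_{i_0}}$ violates $\theta(y,H_0)=0$ and is \emph{not} admissible (this is exactly the obstruction exploited later in Proposition \ref{prop: restriction}). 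Fortunately this does not affect the proposition, which only asserts that $H_0$ lies in a set of cardinality $r$, not that every element of that set is realisable: the $r$ candidates are non-zero vectors in distinct one-dimensional root spaces, hence pairwise distinct, so the candidate set itself already has cardinality exactly $r$. Simply delete the parenthetical.
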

\begin{proof}
The algebra $\h_1$ is obtained modifying a normal $j$-algebra $(\d,[\cdot,\cdot])$ via a certain $\phi$, and any non-null $H_0$ will live in the derived space $\theta(\h_1,\h_1)$, hence in the nilradical $\n=[\d,\d]$ (Proposition \ref{conmutador modi igual}).
As $JH_0\in\a=\n^\perp$, Theorem \ref{prop raices} says that $H_0\in \bigoplus_{i=1}^r \n_{\epsilon_i}$, where $r=\dim \a$ and $\epsilon_1,\ldots,  \epsilon_r$ are the distinguished roots. Since $\theta$ equals the modified bracket $[\cdot ,\cdot ]_\phi$, equation \eqref{eq: importante} reads
$2\mu(x)H_0= [x,H_0]+\phi_x H_0-\phi_{H_0} x$. But the last two summands vanish ($H_0\in \n$ and Lemma \ref{D cero}), so we are left with
$2\mu(x)H_0=[x,H_0]$ for all $x\in \h_1$. That means $2\mu$ is a distinguished root $\epsilon_i$. Let us then write $H_0=tx_i$ for a unit generator $x_i\in\n_{\epsilon_i}$ and $0\neq t \in \R$. Since $-2\mu$ is the metric dual to $JH_0$, it follows that    $\epsilon_{i}(x)=-t\langle x, h_i \rangle$ 
with $h_i=Jx_i$. Taking $x=h_i$ now produces  $t=-\epsilon_i(h_i)$. Altogether $H_0$ belongs in the finite set 
\[\{-\epsilon_i(h_i)x_i \mid i=1,\ldots,  r\}\]
of cardinality $r=\dim \a=\dim\h_1-\dim \theta(\h_1,\h_1)$.
\end{proof}

\vspace{.5cm}

\subsection{Curvature} 

It is rather straightforward to check that a general modification $\h_{\phi}$ has  Levi-Civita connection 
$$\nabla^{\phi}_{x}=\nabla_{x}+\phi_x$$
(see \cite[Proposition 6]{Cortes} as well). 

\begin{prop}\label{prop: Ricci invariante}
Modifications preserve the entire Riemann tensor, and hence we have $\ric^{\phi}=\ric$ too.
\end{prop}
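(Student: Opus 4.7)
The plan is to exploit the given formula $\nabla^\phi_x = \nabla_x + \phi_x$ together with the defining properties of a modification. Writing out the curvature with respect to the modified bracket,
\begin{equation*}
R^\phi_{x,y} = [\nabla^\phi_x,\nabla^\phi_y] - \nabla^\phi_{[x,y]_\phi},
\end{equation*}
and expanding, the term $[\phi_x,\phi_y]$ vanishes by Definition \ref{def modi}, as do the further terms $\phi_{[x,y]}$ and $\phi_{\phi_x y}$, $\phi_{\phi_y x}$ arising from $\nabla^\phi$ applied to $[x,y]_\phi = [x,y]+\phi_x y-\phi_y x$. After these cancellations a direct bookkeeping yields
\begin{equation*}
R^\phi_{x,y} - R_{x,y} = [\nabla_x,\phi_y] + [\phi_x,\nabla_y] - \nabla_{\phi_x y} + \nabla_{\phi_y x}.
\end{equation*}

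The crux is therefore to prove the compatibility identity
\begin{equation*}
[\phi_x,\nabla_y] = \nabla_{\phi_x y}\qquad\text{for all } x,y\in\h,
\end{equation*}
after which the four residual summands cancel in pairs and we conclude $R^\phi = R$. I would derive this identity from Koszul's formula
\begin{equation*}
2\langle \nabla_y z,w\rangle = \langle [y,z],w\rangle -\langle [z,w],y\rangle +\langle [w,y],z\rangle,
\end{equation*}
applied to each of $\nabla_{\phi_x y}z$, $\nabla_y\phi_x z$ and $\phi_x\nabla_y z$. Since $\phi_x$ is both skew-symmetric and a derivation of $[\cdot,\cdot]$, one can transfer $\phi_x$ across each of the three Koszul terms: the derivation property redistributes $\phi_x$ across brackets such as $\phi_x[y,z] = [\phi_x y,z] + [y,\phi_x z]$, while skew-symmetry moves $\phi_x$ between the two slots of $\langle\cdot,\cdot\rangle$ at the cost of a sign. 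A routine term-by-term comparison shows that
\begin{equation*}
2\langle \nabla_{\phi_x y}z + \nabla_y \phi_x z - \phi_x\nabla_y z,\,w\rangle = 0
\end{equation*}
for every $w\in\h$, which gives the desired identity. This step is the main (though not deep) obstacle: it is just a bit of careful bookkeeping, and it is precisely the place where the hypothesis that $\phi_x\in\op{Der}(\h)\cap\so(\h)$ is needed.

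Granting the above, the statement for $R$ is immediate. For the Ricci tensor one simply notes that both modifications share the same underlying vector space and the same inner product; hence the trace $\ric(x,y) = \sum_i\langle R_{e_i,x}y,e_i\rangle$ (computed in any orthonormal basis) depends only on $R$, and so $\ric^\phi = \ric$ follows at once.
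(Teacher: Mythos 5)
Your proposal is correct and follows essentially the same route as the paper: expand $R^{\phi}$ using $\nabla^{\phi}=\nabla+\phi$, cancel the terms $[\phi_x,\phi_y]$, $\phi_{[x,y]}$, $\phi_{\phi_x y}$ via the modification axioms, and reduce everything to the identity $[\phi_x,\nabla_y]=\nabla_{\phi_x y}$ for skew derivations. The only difference is that the paper simply quotes this last identity as a known property of skew-symmetric derivations, whereas you supply its (correct) verification via the Koszul formula.
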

\begin{proof}
Take any $x, z \in \h$.  Then 
\begin{align*}
    R^{\phi}_{z, x}&=\nabla^{\phi}_{z}\nabla^{\phi}_{x}-\nabla^{\phi}_{x}\nabla^{\phi}_{z}- \nabla^{\phi}_{[z, x]_{\phi}}\\
&=\nabla_{z}\nabla_{x}+\phi_z \nabla_{x}+\nabla_{z}\phi_x +\phi_z\phi_x - \nabla_{x}\nabla_{z}-\nabla_{x}\phi_z  -\phi_x \phi_z \\
&\quad  -\nabla_{[z, x]}-\phi_{[z, x]}- \nabla_{\phi_z x}-\phi_{\phi_z x}+\nabla_{\phi_x z}+\phi_{\phi_x z}\\
&=R_{z, x}+\phi_z \nabla_{x}+\nabla_{z}\phi_x -\phi_x \nabla_{z}-\nabla_{x}\phi_z  -\nabla_{\phi_z x}  + \nabla_{\phi_x z}
\end{align*}
But any skew derivation $D$, such as our maps $\phi_x$, satisfies 
    $\nabla_{x}D +\nabla_{Dx}=D\nabla_{x}$.
This means that in the last line above only $R_{z, x}$ survives whilst the other terms cancel each other out. 
\end{proof}

\smallskip

Let us go back to the setting of \S \ref{sec:centreless}, where $\g$ was a centreless Sasakian Lie algebra obtained from a K{\"a}hler-exact Lie algebra $(\h_1,\theta)$ and some vector $H_0\neq 0$. 
Assuming $\h_1$ is solvable, it follows from Theorem \ref{thm: exacta modi normal} that $\h_1$ is a modification $\d_\phi$ of some normal $j$-algebra $\d$. Using this data, in formula \eqref{Ric en h1} we may replace the term $\ric^{\h_1}$ by $\ric^\d$ (Proposition \ref{prop: Ricci invariante}), and the summand $\langle \theta(Jx,y),H_0\rangle$ reads
\[ \langle \theta(Jx, y), H_0  \rangle=\langle [Jx, y]+\phi_{Jx} y-\phi_yJx,H_0\rangle
    =\langle [Jx, y],H_0\rangle+ \langle \phi_{Jx} y,H_0\rangle -\langle \phi_yJx,H_0 \rangle.
\]
In the right-hand side, the last two terms vanish because, for instance, 
$\langle \phi_{Jx} y,H_0\rangle=-\langle y, \phi_{Jx} H_0 \rangle=0$ (Lemma \ref{D cero}, $H_0 \in \n_{\epsilon_i}$). 
 Therefore the Ricci tensor reads 
\[\ric(x,y)=\ric^{\d}(x,y)-2\langle x, y\rangle +\frac{1}{2}\langle [Jx, y],H_0\rangle, \qquad x,y \in \h_1.\]
The fallout is that when studying Einstein-like conditions, it is enough to consider normal $j$-algebras only.
\medbreak

With that in place we now assume $\h_1$ is a normal $j$-algebra, and therefore all of \S \ref{sec:normal-j-algebras} applies. We have seen that $H_0$, if non-zero, is of the form $-\epsilon_{i}(h_i)x_i \in \n_{\epsilon_i}$ for some $i=1,\ldots,  r$ (Proposition \ref{prop: finite}). 
The following result exhibits a much needed necessary condition on $\h_1$ that obstructs the construction of $\eta$-Einstein Sasakian structures.

\begin{prop}\label{prop: restriction} 
Consider the centreless Sasakian solvable algebra $\g$ built from the normal $j$-algebra $\h_1$ and $0\neq H_0\in \n_{\epsilon_i}$. 
If $\g$ is $\eta$-Einstein then $n_{ji}=0$ for all $j<i$.
\end{prop}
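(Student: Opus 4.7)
The plan is to sidestep any Ricci-tensor computation and instead exploit the compatibility condition \eqref{eq: importante} from Proposition \ref{h1 crea sasaki sin centro} directly, since that identity must hold solely because $\g$ exists as a Sasakian Lie algebra.

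First I would use Proposition \ref{prop: finite} to write $H_0 = -\epsilon_i(h_i)\,x_i$ for a unit generator $x_i\in \n_{\epsilon_i}$, so that $JH_0 = -\epsilon_i(h_i)\,h_i \in \a$ (with $h_i = Jx_i$). The closed $1$-form $\mu(x) = -\tfrac{1}{2}\langle x, JH_0\rangle$ then vanishes on every root space of $\n$: indeed, in a normal $j$-algebra one has $\n \perp \a$, and $JH_0$ lies in $\a$.

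Next I would assume, for contradiction, that $n_{ji}\geq 1$ for some $j<i$, and pick a non-zero $u\in \n_{(\epsilon_j-\epsilon_i)/2}$. Evaluating \eqref{eq: importante} at $x=u$, its left-hand side $2\mu(u)H_0$ is zero by the remark just made. Its right-hand side equals $\theta(u, H_0) = -\epsilon_i(h_i)\,[u, x_i]$; the bracket $[u, x_i]$ lies in $\n_{(\epsilon_j+\epsilon_i)/2}$ and is non-zero by the third bullet of Theorem \ref{prop raices}, applied with the index pair $(i',j') = (j,i)$. Since $\epsilon_i(h_i)\neq 0$ by \eqref{anular raices}, the right-hand side is non-zero, giving the required contradiction and forcing $n_{ji}=0$.

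The only real delicacy is keeping the index conventions straight: the non-distinguished root $(\epsilon_j-\epsilon_i)/2$ with $j<i$ is exactly the case $(\epsilon_{i'}-\epsilon_{j'})/2$ with $i'<j'$ to which the non-vanishing statement of Theorem \ref{prop raices} applies, so I do not foresee any serious obstacle. As a side observation, this argument does not invoke the $\eta$-Einstein hypothesis at all: the vanishing $n_{ji}=0$ for $j<i$ follows from the mere existence of the centreless Sasakian algebra $\g$ with $H_0\in \n_{\epsilon_i}$ non-zero, via \eqref{eq: importante} combined with Piatetskii-Shapiro's structure theorem.
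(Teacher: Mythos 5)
Your proof is correct, but it takes a genuinely different route from the paper's. The paper works with curvature: it computes $\ric(h_i,h_i)$ via \eqref{ricci en a} and \eqref{Ric en h1}, obtaining a contribution $-\tfrac{1}{2}c^{2}\sum_{j<i}n_{ji}$ that is absent from the value of $\ric(u,u)$ coming from \eqref{Ric en u,u}, \eqref{ad y traza} and \eqref{eq: traza}; the $\eta$-Einstein condition forces the two numbers to agree, which kills the non-negative sum. You instead extract everything from the algebraic compatibility condition \eqref{eq: importante}: since $JH_0\in\a$ and $\n\perp\a$, the form $\mu$ vanishes on $\n$, hence $\theta(x,H_0)=0$ for every $x\in\n$, while the third bullet of Theorem \ref{prop raices} (applied with the indices relabelled) gives $[u,x_i]\neq 0$ for any $0\neq u\in\n_{(\epsilon_j-\epsilon_i)/2}$ with $j<i$. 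Both steps check out: in the setting of the proposition $\theta$ is the normal-$j$-algebra bracket, and even for a modification $\d_\phi$ one still has $\theta(u,H_0)=[u,H_0]$, by Lemma \ref{D cero} together with $\phi_{H_0}=0$. What your approach buys is strictly more than the statement asks for: the vanishing $n_{ji}=0$ for $j<i$ is already forced by the mere existence of the centreless Sasakian algebra with $0\neq H_0\in\n_{\epsilon_i}$, so the $\eta$-Einstein hypothesis is never invoked; this also sharpens Proposition \ref{prop: finite}, since among the $r$ candidates $-\epsilon_i(h_i)x_i$ only those indices $i$ with $\n_{(\epsilon_j-\epsilon_i)/2}=0$ for all $j<i$ are actually admissible. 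If you write this up, do make explicit the reduction $\theta(u,H_0)=[u,H_0]$ and the index swap in the non-degeneracy clause of Piatetskii--Shapiro's theorem, since the entire argument hinges on that one bullet.
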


\begin{proof}
As usual, let $\a$ denote the Abelian subalgebra orthogonal to the commutator $[\h_1, \h_1]$. We borrow a computation from \cite[Section 3]{D} to arrive at 
 \begin{equation}\label{ricci en a}
     \ric^{\h_1}(h, h)=-\sum_{\alpha \in \Delta} \alpha(h)^{2}\dim \n_{\alpha}, \qquad  \text{ for any }\ h\in \a.
 \end{equation}
 Consider the orthonormal basis $\{h_1, \ldots,  h_r\}$ of $\a$, where $h_i=Jx_i$ and the unit vector $x_i$ generates the distinguished root space $\n_{\epsilon_i}$. Then $\alpha(h_i)\neq 0$ if and only if $\alpha$ is one among $\epsilon_i$,\ $\frac 12 \epsilon_i$,\ $\frac{1}{2}(\epsilon_i \pm \epsilon_ j)$ with $i<j$, or $\frac{1}{2}(\epsilon_j \pm \epsilon_i)$ with $j<i$, and 
 \begin{equation}
     \ric^{\h_1}(h_i, h_i)=-c^2\left(1+\frac{1}{4}n_i+\frac{1}{2}\sum_{i<j}n_{ij}+\frac{1}{2}\sum_{j<i}n_{ji}\right)
 \end{equation}
 where $c:=\epsilon_i(h_i)$. 
 Using \eqref{Ric en h1} with $H_0=-cx_i$ we obtain
\begin{equation*}\begin{split}
     \ric(h_i,h_i)&=\ric^{\h_1}(h_i,h_i)-2+\frac{1}{2}\langle \nonumber \theta(Jh_i,h_i),H_0 \rangle \\ 
     &=\ric^{\h_1}(h_i,h_i)-2-\frac{1}{2}c^2
     =-c^2\left(\frac{3}{2}+\frac{1}{4}n_i+\frac{1}{2}\sum_{i<j}n_{ij}+\frac{1}{2}\sum_{j<i}n_{ji}\right)-2. 
     \end{split}\end{equation*}

 On the other hand, formulas \eqref{Ric en u,u}, \eqref{ad y traza} and \eqref{eq: traza} together force
\[ \ric(u,u)=-c^{2}\left(\frac{3}{2}+\frac{1}{4}n_i+\frac{1}{2}\sum_{i<j}n_{ij}\right)-2. \]
Eventually, if $\g$ is $\eta$-Einstein necessarily $\sum_{j<i} n_{ji}=0$. But as the $n_{ji}$ are non-negative, they must all vanish.
\end{proof}


\section[Centreless Sasakian algebras with $ \Ker \ad_{\xi}=\R \xi $]{Centreless Sasakian algebras with \texorpdfstring{$ \Ker \ad_{\xi}=\R \xi $}{}}

To complete the picture we have dealt with throughout the paper, we consider the extreme case, so to speak -- that being when $\Im  \ad_{\xi}$  is  largest possible (codimension one). 

\begin{prop}\label{prop: final}
A Sasakian Lie algebra with no centre and $\Ker \ad _{\xi}= \R\xi$ is three-dimensional and simple (that is, $\su(2)$ or  $\slie(2, \R)$).
\end{prop}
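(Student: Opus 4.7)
My plan is to complexify $\g$ and exploit the $\slie(2,\C)$-module structure induced by a 3-dimensional subalgebra $\mathfrak{k}$ generated by $\xi$ together with one eigenvector pair of $\ad_\xi$; representation theory will then force $\g=\mathfrak{k}$, which must be $\su(2)$ or $\slie(2,\R)$.

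First I would redo the diagonalisation argument of Section~\ref{sec:centreless}: since $\ad_\xi|_\q$ is invertible, skew-symmetric and commutes with $\varphi$, there exists an orthonormal basis $(u_j, v_j)_{j=1}^n$ of $\q$ with $\varphi u_j=v_j$ and $\ad_\xi u_j=k_jv_j$, $\ad_\xi v_j=-k_ju_j$ for nonzero $k_j\in\R$. The Sasakian relation $\D\eta=2\Phi$ yields $\eta[u_j,v_j]=2$, while Jacobi on $(\xi, u_j, v_j)$ kills the remaining $\q$-part, giving $[u_j,v_j]=2\xi$. After picking $k_1$ of maximal modulus, $\mathfrak{k}:=\operatorname{span}\{\xi, u_1, v_1\}$ is a 3-dimensional subalgebra isomorphic to $\su(2)$ or $\slie(2,\R)$ according to the sign of $k_1$.

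Next I would regard $\g_\C$ as a $\mathfrak{k}_\C\cong\slie(2,\C)$-module under the adjoint action, completely reducible into irreducibles $V_\ell$ of dimension $2\ell+1$. Since $\xi$ corresponds to $\tfrac{ik_1}{2}H$ with $H$ the standard Cartan of $\slie(2,\C)$, the eigenvalues of $\ad_\xi$ on $V_\ell$ are $\{ik_1 m:-\ell\leq m\leq\ell\}$. The hypothesis $\Ker\ad_\xi=\R\xi$ forces the $0$-eigenspace of $\ad_\xi$ on $\g_\C$ to be one-dimensional (so exactly one integer-$\ell$ summand, and $V_0$ is excluded), and maximality of $|k_1|$ in the $\ad_\xi$-spectrum caps $\ell\leq 1$. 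Hence $\g_\C=V_1\oplus m V_{1/2}$ with $V_1\cong\mathfrak{k}_\C$ and $m\geq 0$. Clebsch-Gordan gives $V_{1/2}\otimes V_{1/2}=V_1\oplus V_0$, so the absence of a $V_0$-summand in $\g_\C$ makes each $V_{1/2}^{(j)}$ abelian, while cross brackets $[V_{1/2}^{(i)}, V_{1/2}^{(j)}]$ live in $V_1$ with a single Schur parameter $c_{ij}$; a direct Jacobi computation on a triple with two entries in copy $i$ and one in copy $j$ kills $c_{ij}$. Therefore $V_{1/2}^{\oplus m}$ is an abelian ideal.

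Finally I would derive the contradiction. Because $\varphi$ commutes with $\ad_\xi$ and the $\ad_\xi$-spectra on $V_1$ and on $V_{1/2}$ are disjoint, the real subspace $\mathfrak{w}\subset\q$ underlying $V_{1/2}^{\oplus m}$ is $\varphi$-invariant. For any nonzero $f\in\mathfrak{w}$ one evaluates
\[
0 = -\eta[f,\varphi f] = \D\eta(f,\varphi f) = 2\Phi(f,\varphi f) = 2\langle f, \varphi^2 f\rangle = -2\|f\|^2,
\]
an absurdity. Thus $m=0$ and $\g=\mathfrak{k}$ is 3-dimensional and simple. The hardest step is the representation-theoretic identification of the admissible irreducibles (confirming that $\xi$ acts as a Cartan element of $\mathfrak{k}_\C$, which is automatic from its spectrum on $\mathfrak{k}$, and that $\ell\geq 3/2$ is excluded by maximality of $|k_1|$); once that is secured, killing the cross parameters $c_{ij}$ and the concluding obstruction to $\D\eta=2\Phi$ are both short computations.
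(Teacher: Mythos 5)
Your proof is correct, but it takes a genuinely different route from the paper's. The paper argues structurally: from $\h=\ad_\xi(\h)\subseteq[\g,\g]$ and the identity $[x,\varphi x]=2\|x\|^2\xi+\theta(x,\varphi x)$ it deduces that $\g$ is perfect, then rules out a non-zero radical by a Levi/nilradical argument (the nilradical would be an ideal of $\h$ on which $\ad_\xi$ is invertible, and the same identity would force $\xi\in\h$), and finally quotes Boothby--Wang for the fact that $\su(2)$ and $\slie(2,\R)$ are the only semisimple contact Lie algebras. You instead extract an explicit $\slie(2)$-triple $\k=\operatorname{span}\{\xi,u_1,v_1\}$ from a maximal-modulus eigenplane of $\ad_\xi$, decompose $\g_\C$ as a $\k_\C$-module, and let the spectral constraints ($\dim\Ker\ad_\xi=1$ excludes extra integer-spin summands, maximality of $|k_1|$ caps the spin at $1$) together with Clebsch--Gordan force $\g_\C=V_1\oplus mV_{1/2}$ with $mV_{1/2}$ an abelian ideal; I checked that your Jacobi computation does kill the cross parameters $c_{ij}$ (one gets $-\tfrac{3}{2}c_{ij}=0$ on a weight vector). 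What your approach buys is self-containedness -- no Levi decomposition and no appeal to Boothby--Wang, since the classification of $\k$ falls out of the construction -- at the cost of heavier machinery (complexification, Weyl complete reducibility, Clebsch--Gordan). Pleasingly, both arguments bottom out in the same elementary obstruction: $\D\eta=2\Phi$ forces $[x,\varphi x]$ to have $\xi$-component $2\|x\|^2\neq 0$, which is incompatible with $x$ lying in an abelian ideal transversal to $\xi$.
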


\begin{proof}
Start from the splitting $\g = \R \xi \oplus \h$. As $\ad_\xi$ preserves $\h$, 
its restriction ${\ad_{\xi}}\restr_{\h }$ is an isomorphism, so 
    $\h= \ad _{\xi} ( \h)  \subseteq [\g , \g]$.
But $\varphi$ preserves $\h$, too, ie 
\begin{equation}\label{[x,varphi x]}
    [x,\varphi x]=-2\omega(x,\varphi x)\xi + \theta(x,\varphi x)=2\|x\|^2 \xi + \theta(x,\varphi x)
\end{equation}
for any $x\in\h$, where $\theta$ is the usual $\h$-component of $[\cdot,\cdot]$. This implies $\xi \in [\g,\g]$ whence 
$$\g = [\g, \g]$$ 
 is perfect. It may be semisimple or not. In the former case $\g$ must be $\su(2)$ or $\slie(2, \R)$, as these are the only semisimple algebras admitting a contact form  \cite[Theorem 5]{B}.\par
\quad The rest of the proof will show that there is no latter case, that is, $\g$ must be semisimple. 
Take the Levi decomposition $\g = \s \ltimes \r$, where $\s$ is a semisimple subalgebra and $\r$ the radical. Let $\n$ be the nilradical of $\g$ (which is the nilradical of $\r$). Any derivation of the solvable algebra $\r$ sends it into $\n$. Applying this fact to the ${\ad_x}\restr_{\r}$, with $x\in \s$, we obtain $[\s, \r] \subseteq \n$, hence  
\[  [\g, \g]= [\s, \s] \oplus \big([ \s, \r] + [\r, \r]\big) \subseteq \s \oplus \n. \]
The fact that $\g$ is perfect implies $\n=\r$. \par
\quad Since $\n$ is an ideal, the adjoint map ${\ad_{\xi}}\restr_{{\n}}\colon \n \to \n$ is a well-defined endomorphism.  We claim it is an isomorphism.
Indeed, if there existed $x \in \n$ such that  $[\xi, x] =0$ then $x = c\, \xi$ for some $c \in \R$ by hypothesis.  Supposing $c\neq 0$ would imply $\xi \in \n$, making $\ad_{\xi}$ a nilpotent endomorphism. At the same time $\ad_\xi$ is skew, so 
$\xi$ would have to be central, which cannot be. Hence $c=0$ and  $x=0$.  \par
\quad Now that ${\ad_{\xi}}\restr_{\n}$ is an isomorphism, we have $\n \subseteq \Im \ad_{\xi}=\h$. 
Consider any $x \in \n$, so from \eqref{[x,varphi x]} 
$$
2\|x\|^2 \xi =[x ,\varphi x] - \theta (x, \varphi x),
$$
with $\theta (x, \varphi x)\in \h$. The right-hand side lives in $\h$ since $\n$ is an ideal contained in $\h$. So, if $x\neq 0$ then   $\xi \in \h$, and at the same time $\xi \perp \h$, which is a contradiction. Therefore $\n=0$ and $\g$ is semisimple. 
\end{proof}

Of course the above proposition can be read off \cite{Cortes} since perfect implies unimodular. Our proof has a flavour of its own and ties in with the earlier discussion.

\bigbreak 


\begin{thebibliography}{99}

\bibitem{Ilka} I.Agricola, D.Di\,Pinto, G.Dileo, M.Kuhrt, \textit{A new approach to the classification of almost contact metric manifolds via intrinsic endomorphisms}, ArXiv preprint 2504.16900.

\bibitem{Dalek-Keizo-Kamishima} D.Alekseevskii, K.Hasegawa, Y.Kamishima, \textit{Homogeneous Sasaki and Vaisman manifolds of unimodular Lie groups}, 
Nagoya Math. J. 243 (2021), 83--96.

\bibitem{Dalek-Vicente-Keizo-Kamishima} D.Alekseevskii, V.Cort{\'e}s, K.Hasegawa, Y.Kamishima, \textit{Homogeneous locally conformally K{\"a}hler and Sasaki manifolds}, Internat. J. Math. 26 (2015), Article 1541001.

\bibitem{AD} 
A.Andrada, G.Dileo, \textit{Odd-dimensional counterparts of Abelian complex and hypercomplex structures}, Math. Nachr. 296 (2023), 470--508.

\bibitem{AFV} 
A.Andrada, A.Fino, L.Vezzoni, \textit{A class of Sasakian 5-manifolds}, Transf. Groups 14 (2009), 493--512.

\bibitem{to} 
A.Andrada, A.Tolcachier, \textit{Harmonic complex structures and special Hermitian metrics on products of Sasakian manifolds}, Ann. Mat. Pura Appl. 203 (2024), 1037--1060.

\bibitem{ab} 
A.Andrada, R.Villacampa, \textit{Abelian balanced Hermitian structures on unimodular Lie algebras}, Transf. Groups 21 (2016), 903--927.

\bibitem{AW1} R.Azencott, E.N.Wilson, \textit{Homogeneous manifolds with negative curvature I}, Trans. Amer. Math. Soc. 215 (1976), 323--362.

\bibitem{AW2} \bysame, \textit{Homogeneous manifolds with negative curvature II}, Memoirs AMS vol. 8/178, 1976.

\bibitem{Blair} 
D.Blair, Riemannian geometry of contact and symplectic manifolds, Progress in Math. 203, Birkh{\"a}user, 2010.

\bibitem{Bo} C.Bock, \textit{On low-dimensional solvmanifolds}, Asian J. Math. 20 (2016), 199--262.

\bibitem{B} 
W.Boothby, H.Wang, \textit{On contact manifolds}, Ann. Math. 68 (1958), 721--734. 

 \bibitem{Boyer} C.Boyer, K.Galicki, \textit{3-Sasakian manifolds}, Surveys Diff. Geom. VI (1999), 123--184.

\bibitem{Boyer-Kris-BOOK} \bysame, Sasakian Geometry, Oxford Mathematical Monographs, OUP, 2008

\bibitem{BGM}
C.Boyer, K.Galicki, P.Matzeu, \textit{On $\eta$-Einstein Sasakian geometry}, Comm. Math. Phys. 262 (2006), 177--208.

\bibitem{Cho-Chun} 
J.T.Cho, S.H.Chun, \textit{On the classification of contact Riemannian manifolds satisfying the condition (C)}, Glasgow Math. J. 45 (2003), 475--492.

\bibitem {Cortes} 
V.Cort\'es, K.Hasegawa, \textit{Unimodular Sasaki and Vaisman Lie groups}, Math. Z. 303 (2023), Article 84.

\bibitem{D} 
J.D'Atri, \textit{The curvature of homogeneous Siegel domains}, J. Diff. Geom. 15 (1980), 61--70. 

\bibitem {Dotti} 
J.D'Atri, I.Dotti Miatello, \textit{Characterization of bounded symmetric domains by curvature}, Trans. Amer. Math. Soc. Ser. B 276 (1983), 531--540.

\bibitem{frob} 
A.Diatta, B.Manga, \textit{On properties of principal elements of Frobenius Lie algebras}, J. Lie Theory 24 (2014), 849--864.

\bibitem {Dorf} 
J.Dorfmeister, \textit{Homogeneous K{\"a}hler manifolds admitting a transitive solvable group of automorphisms}, Ann. Sci. {\'Ecole} Norm. Sup. 18 (1985), 143--180.

\bibitem{Dorfmeister-Nakajima} J.Dorfmeister, K.Nakajima, \textit{The fundamental conjecture for homogeneous K{\"a}hler manifolds}, Acta Math. 161 (1988), 23--70.

\bibitem{Anna} 
A.Fino, A.Otal, L.Ugarte, \textit{Six-dimensional solvmanifolds with holomorphically trivial canonical bundle}, Int. Math. Res. Not. 2015 (2015), 13757--13799.

\bibitem{Garland} H.Garland, \textit{On the cohomology of lattices in solvable Lie groups}, Ann. Math. 84 (1966), 174--195.

\bibitem{Geiges} 
H.-J.Geiges, \textit{Normal contact structures on $3$-manifolds}, T{\^o}hoku Math. J. (2) 49 (1997), 415--422.

\bibitem{Gindikin-PS-V} 
S.G.Gindikin, I.I.Piatetskii-Shapiro, E.B.Vinberg, \textit{Homogeneous K{\"a}hler manifolds}, pp. 1--87 in: E.Vesentini (ed.), Geometry of homogeneous bounded domains, CIME Summer Schools vol. 45, Springer, 1968.

\bibitem{VGS} V.V.Gorbatsevich, O.V.Shvartsman, E.B.Vinberg, Discrete subgroups of Lie groups, published as part I of: Lie groups and Lie algebras II (ed. A.L.Onishchik, E.B.Vinberg), Encyclopaedia Math. Sci. 21, Springer, 2000.

\bibitem{Grantcharov-Ornea} 
G.Grantcharov, L.Ornea, \textit{Reduction of Sasakian manifolds}, J. Math. Phys. 42 (2001), 3809--3816.

\bibitem {Hano} 
J.Hano, \textit{On Kaehlerian homogeneous spaces of unimodular Lie groups}, Amer. J. Math. 79 (1957), 885--900.

\bibitem{Keizo-Kamishima} K.Hasegawa, Y.Kamishima, \textit{Locally conformally K{\"a}hler structures on homogeneous spaces}, Progress in Math. 308 (2015), 353--372.

\bibitem{Mil} J.Milnor, \textit{Curvatures of left-invariant metrics on Lie groups}, Adv. Math. 21 (1976), 293--329.

\bibitem{Oku}
M.Okumura, \textit{Some remarks on space with a certain contact structure}, T{\^o}hoku Math. J. 14 (1962), 135--145.

\bibitem  {shapiro}
I.I.Piateskii-Shapiro, Automorphic functions and the geometry of classical domains, Mathematics and Its Applications 8, Gordon and Breach Science Publ., 1956.

\bibitem{Raghunathan} M.S.Raghunathan, Discrete Subgroups of Lie Groups, Ergeb. Math. Grenzgeb. 68, Springer, 1972.

\bibitem{sas}
S.Sasaki, \textit{Almost contact manifolds, Part 1}, Lecture Notes, Mathematical Institute, T{\^o}hoku University, 1965.

\bibitem{Sasaki} 
S.Sasaki, Y.Hatakeyama, \textit{On differentiable manifolds with certain structures which are closely related to almost contact structure II}, T{\^o}hoku Math. J. 13 (1961), 281--294. 

\end{thebibliography}
\end{document}